\documentclass[]{article}

%opening
\title{Kernel Mean Embedding of Probability Measures and its Applications to Functional Data Analysis}

\author{Saeed Hayati\\\href{mailto:s.hayati@sci.ui.ac.ir}{\texttt{s.hayati@sci.ui.ac.ir}}
	\and
	Kenji Fukumizu\\\href{mailto:fukumizu@ism.ac.jp}{\texttt{fukumizu@ism.ac.jp}}
	\and
	Afshin Parvardeh\\\href{mailto:a.parvardeh@sci.ui.ac.ir}{\texttt{a.parvardeh@sci.ui.ac.ir}}
}

%\author{Saeed Hayati\\
%	Department of Statistics\\ Faculty of Mathematics and Statistics\\University of Isfahan\\
%	Isfahan 81746-73441, Iran\\\href{mailto:s.hayati@sci.ui.ac.ir}{s.hayati@sci.ui.ac.ir}}
%
%\author{KenjiFukumizu\\The Institute of Statistical Mathematics\\
%	10-3 Midori-cho, Tachikawa\\
%	Tokyo 190-8562, Japan\\\href{mailto:fukumizu@ism.ac.jp}{fukumizu@ism.ac.jp}}
%\author{Afshin Parvardeh\\Department of Statistics\\ Faculty of Mathematics and Statistics\\University of Isfahan\\
%	Isfahan 81746-73441, Iran\\\href{mailto:a.parvardeh@sci.ui.ac.ir}{a.parvardeh@sci.ui.ac.ir}}

\RequirePackage[OT1]{fontenc}
\RequirePackage{amsthm,amsmath}
\RequirePackage[numbers]{natbib}
\bibliographystyle{plainnat}
\RequirePackage[colorlinks,citecolor=blue,urlcolor=blue]{hyperref}
\usepackage{graphicx}
\usepackage[latin9]{inputenc}
\usepackage{float}
\usepackage{mathrsfs}
\usepackage{amssymb}
\usepackage{units}
\usepackage{multirow}
%Added package by author
\usepackage{subcaption}

% put your definitions there:
%\startlocaldefs

\DeclareMathOperator*{\argmax}{arg\,max}

\makeatletter
%%%%%%%%%%%%%%%%%%%%%%%%%%%%%% Textclass specific LaTeX commands.
\theoremstyle{plain}
\newtheorem{thm}{\protect\theoremname}
\theoremstyle{plain}
\newtheorem{lem}[thm]{\protect\lemmaname}
\theoremstyle{plain}

\theoremstyle{plain}
\newtheorem{cor}[thm]{\protect\corollaryname}
\theoremstyle{plain}
\newtheorem{prop}[thm]{\protect\propositionname}
\theoremstyle{definition}
\newtheorem{defn}[thm]{\protect\definitionname}

\makeatother

\providecommand{\corollaryname}{Corollary}
\providecommand{\definitionname}{Definition}
\providecommand{\lemmaname}{Lemma}
\providecommand{\propositionname}{Proposition}
\providecommand{\remarkname}{Remark}
\providecommand{\theoremname}{Theorem}

\def\code#1{\texttt{#1}}

\makeatletter
\renewenvironment{proof}[1][]{%
	\def\temp{#1}
	\ifx\temp\empty
	\par\pushQED{\qed}\normalfont%
	\topsep6\p@\@plus6\p@\relax
	\trivlist\item[\hskip\labelsep\bfseries#1\proofname\@addpunct{.}]%
	\ignorespaces
	\else
	
	\par\pushQED{\qed}\normalfont%
	\topsep6\p@\@plus6\p@\relax
	\trivlist\item[\hskip\labelsep\bfseries#1]%
	\ignorespaces
	
	\fi
}{%
	\popQED\endtrivlist\@endpefalse
}
\makeatother
\allowdisplaybreaks

%\endlocaldefs

\begin{document}

\maketitle

\begin{abstract}
This study intends to introduce kernel mean embedding of probability measures over infinite-dimensional separable Hilbert spaces induced by functional response statistical models. The embedded function represents the concentration of probability measures in small open neighborhoods, which identifies a pseudo-likelihood and fosters a rich framework for statistical inference. Utilizing Maximum Mean Discrepancy, we devise new tests in functional response models. The performance of new derived tests is evaluated against competitors in three major problems in functional data analysis including function-on-scalar regression, functional one-way ANOVA, and equality of covariance operators.
\end{abstract}

\section{Introduction}
Functional response models are among the
major problems in the context of Functional Data Analysis. A fundamental
issue in dealing with functional response statistical models arises due to the
lack of practical frameworks on characterizing probability measure
on function spaces. This is mainly a consequence of the tremendous gap on
how we present probability measures in finite-dimensional and infinite-dimensional
spaces. 

A useful property of finite-dimensional spaces is the existence of a locally finite, strictly positive, and translation
invariant measure like Lebesgue or counting measure, which makes it
easy to take advantage of probability
measures directly in the statistical inference. Fitting a statistical model, and estimating parameters,
hypothesis testing, deriving confidence regions and developing
goodness of fit indices, all can be applied by integrating distribution
or conditional distribution of response variables as a presumption
into statistical procedures.

Sporadic efforts have been gone into approximating or representing probability measures
on infinite-dimensional spaces. Let $\mathbb{H}$ be a separable infinite-dimensional Hilbert space and $X$ be a $\mathbb{H}$-valued random element with finite second moment and covariance operator $C$. \citet{delaigle2010defining}
approximated probability of $B_{r}\left(x\right)=\{\parallel X-x\parallel<r\}$
by the surrogate density of a finite-dimensional approximated version of $X$, obtained by projecting the random element
$X$ into a space spanned by first few eigenfunctions of $C$ with largest eigenvalues. The approximated small-ball probability
is on the basis of Karhunen-Lo\`eve expansion and putting an extra assumption that the
component scores are independent. The precision of this approximation depends on the volume of 
ball and probability measure itself.

%Their approximation of the
%small-ball probability is as what comes in follow.
Let $\mathcal{I}$ be a
compact subset of $\mathbb{R}$ such as closed interval $[0,1]$ and $X$ be a zero mean
$L^{2}\left[\mathcal{I}\right]$-valued random element with finite
second moment and Karhunen-Lo\`eve expansion $X=\sum_{j\geq1}\lambda_{j}^{1/2}X_{j}\psi_{j}$,
in which $X_j=\lambda_j^{-1/2}\langle X,\psi_j\rangle$ and $\left\{ \lambda_{j},\psi_{j}\right\} _{j\geq1}$ is the
eigensystem of covariance operator $C$. Suppose that the distribution of $X_j$ is
absolutely continuous with respect to the Lebesgue measure with
density $f_{j}$. Approximation of the logarithm of $p\left(x\,|\,r\right)=P\left(B_{r}\left(x\right)\right)=P\left(\{\parallel X-x\parallel<r\}\right)$
given by \citet{delaigle2010defining} is 
\[
\log p(x\,|\,r)=C_{1}(h,\left\{ \lambda_{j}\right\} _{j\geq1})+\sum_{j=1}^{h}\log f_{j}(x_{j})+o(h),
\]
in which $x_{j}=\left\langle x,\psi_{j}\right\rangle $, and $h$
is the number of components that depends on $r$ and tends to infinity
as $r$ declines to zero. 
%$C_{1}\left(\cdot\right)$ and $h$ depend neither on $x$ nor the probability measure $P$. 
$C_{1}\left(\cdot\right)$ 
depends only on size of the ball and sequence of eigenvalues, though the quantity $o(h)$ as the
precision of approximation depends on $P$.

The quantity $h^{-1}\sum_{j=1}^{h}\log f_{j}(x_{j})$ is called log-density
by \citet{delaigle2010defining}. A serious concern with this approximation
is its precision, which depends on the probability measure itself.
Accordingly, it can not be employed to compare small-ball probability in a family
of probability measures. For example, in the case
of estimating the parameters in a functional response regression model,
the induced probability measure varies with different choices of parameters.
Thus this approximation can not be employed for parameter estimation and comparing the
goodness of fit of different regression models.

Another work in representing probability measures on a general separable
Hilbert space $\mathbb{H}$ presented by \citet{lin2018mixture}. They constructed a
dense subspace of $\mathbb{H}$ called Mixture Inner Product Space (MIPS), which is the union of a countable collection of finite-dimensional subspaces of $\mathbb{H}$. An approximating version of the given $\mathbb{H}$-valued
random element lies in this subspace, which in consequence, lies
in a finite-dimensional subspace of $\mathbb{H}$ according
to a given discrete distribution. They defined a base measure on the MIPS, which is not translation-invariant, and introduced density functions for the MIPS-valued random elements.

%\textcolor{red}{
%In a practical viewpoint,
%both ideas are still more applicable to Gaussian Probability Measure
%models and the application of these methods are restricted mostly
%to problems such as clustering among those we could mention \citet{Delaigle2012,jacques2014model,Delaigle2019}. It can be shown as well, that in a %functional response linear regression model and with the Gaussian assumption, one can derive a sequence of estimators of
%location parameters with both approaches which converges to the ordinary least square estimator.}

Absence of a proper method
in representing probability measures over infinite-dimensional spaces
caused severe problems to statistical inference. To make it clear,
as an example \citet{greven2017general} 
developed a general framework for functional additive mixed-effect
regression models. They considered a log-likelihood function by summing
up the log-likelihood of response functions $Y_i$ at a grid of time-points $t_{id},~d=1,\ldots,D_i$, assuming $Y_i\left(t_{id}\right)$ to be independent within the grid of time-points.
A simulation study by \citet{kokoszka2017discussion} revealed the weak performance of the proposed framework in statistical hypothesis testing in a simple Gaussian Function-on-Scalar linear regression problem.

%In the lack of a Haar measure in infinite-dimensional spaces, 
Currently, MLE
and other density-based methods are 
%currently 
out of reach in the
context of functional response models. In this study, we follow a
different path by identifying probability measures with their kernel
mean functions and introduce a framework for statistical inference
in infinite-dimensional spaces. A promising fact about the kernel
mean functions, which is shown in this paper, is their ability to reflect the concentration of probability
measures in small open neighborhoods, where unlike the approach of \citet{delaigle2010defining}
is comparable among different probability measures. This property
of kernel mean function motivates us to make use of it in fitting statistical models and introducing new statistical tests in the context of functional data analysis. 

%If the embedding is injective, kernel mean embedding induce a metric on the space of probability measures over $\mathbb{H}$, namely Maximum Mean Discrepancy (MMD)  %\citep{muandet2017kernel}. \citet{pan2018} Offered a similar quantity called Ball divergence to quantify the difference between probability measure over separable Banach spaces. %However, for the case of infinite-dimensional spaces, Ball divergence could only distinguish two probability measures if at least on of them possess a full support i.e. %$\text{Supp}\left(P\rigt)=\mathbb{H}}$. They employed Ball divergence for a two sample test, where according to their simulation results both MMD and Ball divergence have a close performance.

This paper is organized as follows: In Section \ref{sec2}, kernel mean embedding
of probability measures over infinite-dimensional separable Hilbert spaces is discussed. In Section \ref{sec3}
the Maximum Kernel Mean estimation method is introduced and estimators
for Gaussian Response Regression models are derived.
In Section \ref{sec::aplications}, new statistical tests are developed for three major problems in functional data analysis and their performance evaluated using simulation studies. Section \ref{sec:discuss} has been devoted to discussion and conclusion. Major proofs are aggregated in the appendix.

\section{Kernel mean embedding of probability measures }\label{sec2}

We summarize the basics of kernel mean embedding.  See \citet{muandet2017kernel} for a general reference. Let $(\mathbb{H},B\left(\mathbb{H}\right),P)$ be a probability measure
space. Throughout this study $\mathbb{H}$ is an infinite-dimensional separable Hilbert space  equipped with inner product
$\langle\cdot,\cdot\rangle_{\mathbb{H}}$. 
A function $k$ : $\mathbb{H}\times\mathbb{H}\to\mathbb{R}$
is a \textit{positive definite kernel} if it is symmetric, i.e., $k(x, y) = k(y, x)$ and 
$\sum_{i=1}^{n}a_{i}a_{j}k(x_{i},x_{j})\geq0$
for all $n\in\mathbb{N}$ and $a_{i}\in\mathbb{R}$ and $x_{i}\in\mathbb{H}$. $k$ is \textit{strictly positive definite} if equality implies $a_{1}=a_{2}=\ldots=a_{n}=0$.
$k$ is said to be \textit{integrally strictly positive definite} if $\int k(x,y)\mu(dx)\mu(dy)>0$
for any non-zero finite signed measure $\mu$ defined over $(\mathbb{H},B\left(\mathbb{H}\right))$.
Any integrally strictly positive definite kernel is strictly positive
definite while the converse is not true \citep{sriperumbudur2010hilbert}. A positive definite kernel
induces a Hilbert space of functions over $\mathbb{H}$, which is called
Reproducing Kernel Hilbert Space (RKHS) and equals to $\mathcal{H}_{k}=\overline{span}\{k(x,\cdot);x\in\mathbb{H}\}$
with inner product  
\[
\langle\sum_{i\geq1}a_{i}k(x_{i},\cdot),\sum_{i\geq1}b_{i}k(y_{i},\cdot)\rangle_{\mathcal{H}_{k}}=\sum_{i\geq1}\sum_{j\geq1}a_{i}b_{j}k(x_{i},y_{j}).
\]
For each $f\in \mathcal{H}_{k}$ and $x\in\mathbb{H}$ we have $f(x)=\langle f,k(.,x)\rangle_{\mathcal{H}_k}$,
which is the reproducing property of kernel $k$. A strictly positive
definite kernel $k$ is said to be \textit{characteristic} for a family of
measures $\mathscr{P}$ if the map 
\[
m:\mathscr{P}\rightarrow \mathcal{H}_{k}\qquad P\mapsto\int k(x,.)P(dx)
\]
is injective. If $\mathbb{E}_{P}(\sqrt{k(X,X)})<\infty$ then $m_{P}(\cdot):=(m(P))(\cdot)$
exists in $\mathcal{H}_{k}$ \citep{muandet2017kernel}, and the function $m_{P}(\cdot)=\int k(x,\cdot)P(dx)$ is
called kernel mean function. Moreover, for any $f\in \mathcal{H}_{k}$
we have $\mathbb{E}_{P}[f(X)]=\langle f,m_{P}\rangle_{\mathcal{H}_{k}}$ \citep{smola2007hilbert}.
Thus, if kernel $k$ is characteristic then every probability measure
defined over $(\mathbb{H},\Sigma)$ is uniquely identified by an element
$m_{P}$ of $\mathcal{H}_{k}$ and \textit{Maximum Mean Discrepancy} (MMD) defined as
\begin{align}\label{formula::MMD_def}
\text{MMD}(\mathcal{H}_{k},\mathbb{P},\mathbb{Q}) & =\sup_{f\in \mathcal{H}_{k},\left\Vert f\right\Vert _{\mathcal{H}_{k}}\leq1}\left\{ \int f(x)\mathbb{P}(dx)-\int f(x)\mathbb{Q}(dx)\right\}\nonumber \\&=\sup_{f\in \mathcal{H}_{k},\left\Vert f\right\Vert _{\mathcal{H}_{k}}\leq1}\left\langle f,m_{\mathbb{P}}-m_{\mathbb{Q}}\right\rangle =\left\Vert m_{\mathbb{P}}-m_{\mathbb{Q}}\right\Vert _{\mathcal{H}_{k}},
\end{align}
is a metric on the family of
measures $\mathscr{P}$ over $\mathbb{H}$
\citep{muandet2017kernel}.
%If the embedding is injective, kernel mean embedding induce a metric on the space of probability measures over $\mathbb{H}$, namely Maximum Mean Discrepancy (MMD) \citep{muandet2017kernel}.

A similar quantity called Ball divergence is proposed by \citet{pan2018} to distinguish probability measures defined over separable Banach spaces. For the case of infinite-dimensional spaces, Ball divergence distinguishes two probability measures if at least one of them possesses a full support, that is, $\text{Supp}\left(P\right)=\mathbb{H}$. They employed Ball divergence for a two-sample test, which according to their simulation results, the performance of both MMD and Ball divergence are close and superior to other tests.

Kernel mean functions can also be used to reflect the concentration
of probability measures in small-balls, if the kernel function is translation-invariant.
A positive definite kernel $k$ is called translation-invariant if
$k(x,y)=\psi(x-y)$ for some positive definite function $\psi$. Gaussian
kernel $e^{-\sigma\parallel x-y\parallel_{\mathbb{H}}^{2}}$ and Laplace
kernel $e^{-\sigma\parallel x-y\parallel_{\mathbb{H}}}$ are such
kernels.
If we choose a continuous characteristic kernel that is bounded and translation-invariant, then the
kernel mean function $m_{P}$ can be employed to represent the concentration
of probability measure in different points of Hilbert space $\mathbb{H}$.
for example, consider 
\[
m_{p}(x)={\displaystyle \int\limits _{\mathbb{H}}e^{-\sigma\left\Vert x-y\right\Vert _{\mathbb{H}}^{2}}P(dy)}.
\]

If $m_{P}(\cdot)$ has an explicit form for a family of probability
measures then $m_{P}(\cdot)$ can be employed to study and compare different
probability measures. For example, if $m_{P}(x_{1})>m_{P}(x_{2})$
then it could be concluded that the concentration of probability measure
$P$ around the point $x_{1}$ is higher than $x_{2}$, and if for given
two probability measures $P_{1}$ and $P_{2}$ we had $m_{P_{1}}(x)>m_{P_{2}}(x)$
then we conclude that the concentration of probability measure
$P_{1}$ around the point $x$ is higher than that of probability measure
$P_{2}$. This property of kernel mean functions makes them a good
candidate to represent probability measures in infinite-dimensional
spaces.

The representation property of probability measures by kernel
mean functions is addressed in the next theorem and corollary. Proofs are provided in the appendix.
\begin{thm}
	\label{thm:thm1}Let $P_{1}$ and $P_{2}$ be two probability measures
	on a separable Hilbert space $\mathbb{H}$ over the field $\mathbb{R}$. Let
	$\psi:\mathbb{R^{+}}\rightarrow\left[0,1\right]$ be a bounded continuous, strictly decreasing
	and positive definite function e.g. $\psi(t)=e^{-t^{2}}$, such that
	$k(x,y)=\psi(\left\Vert x-y\right\Vert _{\mathbb{H}})$ is a translation-invariant
	characteristic kernel, and let $m_{P_{1}}(\cdot)$ and $m_{P_{2}}(\cdot)$
	be the kernel mean embedding of $P_{1}$ and $P_{2}$, respectively, for the kernel
	$k\left(\cdot,\cdot\right)$. If $m_{P_{2}}(y)>m_{P_{1}}(y)$ for a given $y\in\mathbb{H}$, then there exists an open ball
	$B_{r}(y)$ such that $P_{2}\left(B_{r}(y)\right)>P_{1}\left(B_{r}(y)\right)$,
	and $r$ depends only on difference $m_{P_{2}}(y)-m_{P_{1}}(y)$ and
	the characteristic kernel itself.
\end{thm}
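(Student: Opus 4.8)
The plan is to transfer the pointwise inequality $m_{P_2}(y)>m_{P_1}(y)$ into an inequality between super-level sets of $x\mapsto\psi(\|x-y\|)$ through a layer-cake (Tonelli) representation of the kernel mean, and then extract a single small ball on which the ball probabilities must already differ in the right direction. Write $\delta:=m_{P_2}(y)-m_{P_1}(y)>0$, $M:=\psi(0)$ (so $M=k(x,x)\le 1$, and $M=1$ for $\psi(t)=e^{-t^{2}}$) and $L:=\lim_{t\to\infty}\psi(t)\ge 0$ (with $L=0$ in the Gaussian case). Since $k$ is bounded, every $m_{P_i}(y)=\int_{\mathbb H}\psi(\|x-y\|)\,P_i(dx)$ is finite; and because $\psi>L$ pointwise one has $m_{P_1}(y)>L$, hence $0<\delta<M-L$, so $\psi^{-1}(\delta+L)$ is a well-defined positive number.

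First I would record the layer-cake identity. For the nonnegative Borel function $x\mapsto\psi(\|x-y\|)$, Tonelli's theorem gives $m_{P_i}(y)=\int_{0}^{\infty}P_i\big(\{x:\psi(\|x-y\|)>t\}\big)\,dt$. Continuity and strict monotonicity make $\psi$ a decreasing bijection from $[0,\infty)$ onto $(L,M]$, so $\{x:\psi(\|x-y\|)>t\}$ equals the open ball $B_{\psi^{-1}(t)}(y)$ for $t\in(L,M)$, equals $\mathbb H$ for $t\le L$, and is empty for $t\ge M$. Hence $m_{P_i}(y)=L+\int_{L}^{M}P_i\big(B_{\psi^{-1}(t)}(y)\big)\,dt$, and subtracting,
\[
\delta=\int_{L}^{M}\Big(P_2\big(B_{\psi^{-1}(t)}(y)\big)-P_1\big(B_{\psi^{-1}(t)}(y)\big)\Big)\,dt .
\]

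Next, set $\rho:=\psi^{-1}(\delta+L)$ and argue by contradiction. Suppose $P_2(B_r(y))\le P_1(B_r(y))$ for every $r\in(0,\rho]$. Splitting the displayed integral at $t=\psi(\rho)=\delta+L$: over $t\in[\delta+L,M)$ the radii satisfy $\psi^{-1}(t)\le\rho$, so that part is $\le 0$; over $t\in(L,\delta+L)$ the integrand is $\le 1$, so that part is $\le\int_{L}^{\delta+L}1\,dt=\delta$. This crude split only yields $\delta\le\delta$, so I would sharpen it: for $t$ close to $L$ one has $\psi^{-1}(t)\to\infty$, hence $P_1\big(B_{\psi^{-1}(t)}(y)\big)\to1$ and the integrand is bounded away from $1$ on a subinterval adjacent to $L$; therefore the second part is strictly less than $\delta$, which forces $\delta<\delta$, a contradiction. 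Consequently there exists $r\in(0,\rho]$ with $P_2(B_r(y))>P_1(B_r(y))$, and $r$ is controlled by $\rho=\psi^{-1}\big(m_{P_2}(y)-m_{P_1}(y)+\lim_{t\to\infty}\psi(t)\big)$, a quantity depending only on the difference $m_{P_2}(y)-m_{P_1}(y)$ and on $\psi$; for $k(x,y)=e^{-\sigma\|x-y\|^{2}}$ this reads $\rho=\sqrt{-\sigma^{-1}\log\delta}$.

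The measurability/Tonelli step and the identification of the super-level sets with balls are routine consequences of continuity and strict monotonicity of $\psi$, and I do not expect difficulty there. The one genuinely delicate point is upgrading the weak inequality $\delta\le\delta$ to a strict one: this requires exploiting that large balls carry almost all of the mass of both $P_1$ and $P_2$ (equivalently, that the difference of ball probabilities is bounded away from $1$ near $t=L$), and getting the quantitative bound on $r$ to come out exactly in terms of $\delta$ and $\psi$. If one only needs $r$ to lie in some interval $(0,\rho')$ with $\rho'>\psi^{-1}(\delta+L)$, the refinement can be skipped, since then $\psi(\rho')-L<\delta$ and the crude split already gives the contradiction $\delta\le\psi(\rho')-L<\delta$.
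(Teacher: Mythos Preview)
Your argument is correct and takes a genuinely different, cleaner route than the paper's.

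The paper first restricts to a large ball $B_r(y)$ carrying at least half of $\delta$, then partitions $B_r(y)$ into concentric annuli $B'_{i,L}$ with geometrically shrinking radii, approximates $\psi$ by a step function on these annuli, and finally appeals to an Abel-summation-type lemma (their Lemma~11: if $\sum_j a_j b_j>0$ with $\{b_j\}$ decreasing, then some partial sum $\sum_{j\le N}a_j>0$) to extract a single radius $r^*=(1-L^N)r$ on which $P_2-P_1$ is positive. Your layer-cake identity
\[
\delta=\int_L^M\Big(P_2\big(B_{\psi^{-1}(t)}(y)\big)-P_1\big(B_{\psi^{-1}(t)}(y)\big)\Big)\,dt
\]
collapses all of this: it expresses the kernel-mean gap directly as an integral of ball-probability differences over a one-parameter family of radii, after which a short contradiction at a single threshold suffices. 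You avoid the auxiliary lemma entirely and get an explicit outer bound $\rho'$ with $\psi(\rho')-L<\delta$ (e.g.\ $\rho'=\psi^{-1}(L+\delta/2)$), which for $L=0$ matches the paper's initial choice $\psi(r)\le\delta/2$.

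Two minor remarks. First, the ``refinement'' at the sharp threshold $\rho=\psi^{-1}(\delta+L)$ does go through, but the sub-interval near $t=L$ on which the integrand is bounded away from $1$ has length depending on $P_1$; this is harmless for the conclusion (you only claim \emph{some} $r\le\rho$, and $\rho$ depends only on $\delta$ and $\psi$), yet the version in your last paragraph with any fixed $\rho'>\rho$ is the cleaner proof and should be promoted to the main line. Second, both your proof and the paper's yield the statement in the same sense: the ball that works may itself depend on $P_1,P_2$, but it sits inside a ball whose radius is determined by $\delta$ and $\psi$ alone; in the paper this is implicit, since their $N$ from Lemma~11 depends on the measures while the enclosing $r$ does not.
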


\begin{cor}\label{cor:smallBallProb}
	\label{cor:repProperty}Let $P$ be a probability measure on a separable Hilbert
	space $\mathbb{H}$ over the field $\mathbb{R}$. Let $\psi:\mathbb{R^{+}}\rightarrow\left[0,1\right]$
	be a bounded continuous, strictly decreasing and positive definite function e.g. $\psi(t)=e^{-t^{2}}$,
	such that $k(x,y)=\psi(\left\Vert x-y\right\Vert _{\mathbb{H}})$
	is a translation-invariant characteristic kernel, and let $m_{p}(\cdot)$
	be the kernel mean embedding of $P$ for the kernel $k\left(\cdot,\cdot\right)$.
	If $m_{P}(y_{2})>m_{P}(y_{1})$ for some $y_{1},y_{2}\in\mathbb{H}$,
	then there exist open balls of the same size $B_{r}(y_{1})$ and
	$B_{r}(y_{2})$ such that $P\left(B_{r}(y_{2})\right)>P\left(B_{r}(y_{1})\right)$,
	and $r$ depends only on difference $m_{P}(y_{2})-m_{P}(y_{1})$ and
	the characteristic kernel itself.
\end{cor}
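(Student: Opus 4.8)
The plan is to deduce Corollary~\ref{cor:smallBallProb} from Theorem~\ref{thm:thm1} by a recentering argument that converts the ``one measure, two points'' assertion into the ``two measures, one point'' assertion already handled by the theorem, using that $k(x,y)=\psi(\left\Vert x-y\right\Vert _{\mathbb{H}})$ is translation-invariant. Put $v:=y_{2}-y_{1}\in\mathbb{H}$ and let $Q$ be the push-forward of $P$ under the translation $x\mapsto x+v$, i.e.\ $Q(A)=P(A-v)$ for every Borel set $A$; then $Q$ is again a Borel probability measure on $\mathbb{H}$, and since $k$ is bounded ($k(x,x)=\psi(0)$) its kernel mean $m_{Q}$ exists in $\mathcal{H}_{k}$. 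By the change of variables $x=z+v$,
\[
m_{Q}(y_{2})=\int_{\mathbb{H}}\psi\!\left(\left\Vert x-y_{2}\right\Vert _{\mathbb{H}}\right)Q(dx)=\int_{\mathbb{H}}\psi\!\left(\left\Vert z-y_{1}\right\Vert _{\mathbb{H}}\right)P(dz)=m_{P}(y_{1}),
\]
where I used $z+v-y_{2}=z-y_{1}$; equivalently this is the identity $k(z+v,y_{2})=k(z,y_{1})$ expressing translation invariance.

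Next I would invoke Theorem~\ref{thm:thm1} with the pair $(P_{1},P_{2})=(Q,P)$ and the evaluation point $y=y_{2}$. Its hypotheses hold verbatim: $\psi$ and $k$ are the same functions fixed in the corollary, $P$ and $Q$ are probability measures on $\mathbb{H}$, and by the previous step $m_{P}(y_{2})>m_{P}(y_{1})=m_{Q}(y_{2})$. The theorem therefore yields an open ball $B_{r}(y_{2})$ with $P\!\left(B_{r}(y_{2})\right)>Q\!\left(B_{r}(y_{2})\right)$, where $r$ depends only on the difference $m_{P}(y_{2})-m_{Q}(y_{2})=m_{P}(y_{2})-m_{P}(y_{1})$ and on the kernel $k$.

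It remains to translate back. Again by $x=z+v$,
\[
Q\!\left(B_{r}(y_{2})\right)=P\!\left(B_{r}(y_{2})-v\right)=P\!\left(\{z:\left\Vert z-y_{1}\right\Vert _{\mathbb{H}}<r\}\right)=P\!\left(B_{r}(y_{1})\right),
\]
so that $P(B_{r}(y_{2}))>P(B_{r}(y_{1}))$ with $B_{r}(y_{1})$ and $B_{r}(y_{2})$ of the same radius $r$, and $r$ exactly as delivered by Theorem~\ref{thm:thm1}. I do not expect a real obstacle here: all the substance is in Theorem~\ref{thm:thm1}, and the only points to verify are that $Q$ is an admissible input to that theorem---a Borel probability measure with a well-defined kernel mean, which is immediate from boundedness of $k$---and that translation invariance is precisely what makes $m_{Q}(y_{2})=m_{P}(y_{1})$.

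For completeness, a self-contained alternative is to mimic the proof of Theorem~\ref{thm:thm1} directly: write $m_{P}(y_{2})-m_{P}(y_{1})=\int_{\mathbb{H}}\big[\psi(\left\Vert x-y_{2}\right\Vert _{\mathbb{H}})-\psi(\left\Vert x-y_{1}\right\Vert _{\mathbb{H}})\big]P(dx)$, insert the layer-cake identity $\psi(t)=\int_{0}^{\psi(0)}\mathbf{1}\{t<\psi^{-1}(u)\}\,du$ (valid because $\psi$ is continuous and strictly decreasing, hence invertible on its range), interchange the order of integration by Fubini, and obtain $m_{P}(y_{2})-m_{P}(y_{1})=\int_{0}^{\psi(0)}\big[P(B_{\psi^{-1}(u)}(y_{2}))-P(B_{\psi^{-1}(u)}(y_{1}))\big]\,du$; assuming $P(B_{r}(y_{2}))\le P(B_{r}(y_{1}))$ for all $r\le\psi^{-1}\!\big(\tfrac12(m_{P}(y_{2})-m_{P}(y_{1}))\big)$ and splitting the $u$-integral at $u=\tfrac12(m_{P}(y_{2})-m_{P}(y_{1}))$---bounding the integrand by $1$ below the split point and by $0$ above it---forces the contradiction $m_{P}(y_{2})-m_{P}(y_{1})\le\tfrac12(m_{P}(y_{2})-m_{P}(y_{1}))$, so such an $r$ must exist.
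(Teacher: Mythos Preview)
Your primary argument---push $P$ forward by a translation and invoke Theorem~\ref{thm:thm1}---is correct and is exactly the paper's approach; the paper translates by $-y_{1}$ and $-y_{2}$ separately and compares the two push-forwards at the origin, whereas you translate once by $v=y_{2}-y_{1}$ and compare $P$ with $Q$ at $y_{2}$, but this is purely cosmetic.

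Your layer-cake alternative, on the other hand, is a genuinely different route from the paper's proof of Theorem~\ref{thm:thm1}, which first localises to a large ball, partitions it into geometrically shrinking annuli, and then appeals to a combinatorial lemma (Lemma~\ref{lem:sumajbj}) on signed series. The identity $\psi(t)=\int_{0}^{\psi(0)}\mathbf{1}\{t<\psi^{-1}(u)\}\,du$ together with Fubini collapses all of that into a single line and even delivers the explicit bound $r\le\psi^{-1}\!\big(\tfrac{1}{2}(m_{P}(y_{2})-m_{P}(y_{1}))\big)$; the paper's annulus argument is more hands-on but gives no sharper control on $r$.
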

Kernel Mean Embedding of probability measures also has a connection
with kernel scoring rules. Proper Scoring Rules are well-established instruments with 
applications in assessing probability
models \citep{gneiting2007strictly}. The following definition is borrowed from \citet{steinwart2019strictly} and adapted to our context. In the following definition,  $c_{00}$ is the infinite-dimensional
inner product space of sequences vanishing at infinity, which is a dense subspace of $\ell_{2}$.
\begin{defn}
	Let $\mathbb{X}$ be an arbitrary measurable space. Here it may be considered to be either the separable
	Hilbert space $\ell_{2}$ or the separable inner product space $c_{00}$,
	and let $\mathcal{M}_{1}\left(\mathbb{X}\right)$ be the space of
	probability measures on $\mathbb{X}$. For $\mathcal{P}\subseteq\mathcal{M}_{1}\left(\mathbb{X}\right)$,
	a \textit{scoring rule} is defined as a function $S:\mathcal{P}\times\mathbb{X}\to\left[-\infty,\infty\right]$
	such that the integral $\int_{\mathbb{X}}S\left(P,x\right)Q\left(dx\right)$
	exists for all $P,Q\in\mathcal{P}$. The scoring rule is \textit{proper}
	if 
	\[
	\int_{\mathbb{X}}S\left(P,x\right)P\left(dx\right)\leq\int_{\mathbb{X}}S\left(Q,dx\right)P\left(dx\right),\quad\forall P,Q\in\mathcal{P}
	\]
	and is called \textit{strictly proper} if the  equality implies $P=Q$.
\end{defn}
\textit{Kernel scores} are a general class of proper scoring rules, in
which the scoring rule is generated by a symmetric positive definite
kernel $k:\mathbb{X}\times\mathbb{X}\to\mathbb{R}$ by 
\begin{align}
S_{k}\left(P,x\right) :=&-\int k\left(\omega,x\right)P\left(d\omega\right)+\frac{1}{2}\int\int k\left(\omega,\omega'\right)P\left(d\omega\right)P\left(d\omega'\right)\nonumber\\
=&-m_{p}\left(x\right)+\frac{1}{2}\left\Vert m_{P}\right\Vert ^{2}.\label{formula:properScoringRule}
\end{align}
The Maximum Mean Discrepancy distance between 
$P,Q\in\mathcal{P}$ satisfies 
\begin{equation}\label{formula::MMD1}
\left\Vert m_{P}-m_{Q}\right\Vert _{\mathcal{H}_{k}}^{2}=2\left(\int S_{k}\left(Q,x\right)P\left(dx\right)-\int S_{k}\left(P,x\right)P\left(dx\right)\right).
\end{equation}
If $k$ is bounded then $\mathcal{P}=\mathcal{M}_{1}\left(\mathbb{X}\right)$ \citep{steinwart2019strictly}.
In effect, a kernel score rule $S_{k}$ is a strictly proper scoring
rule if and only if kernel mean embedding is injective or $k$ to
be characteristic. 

There are a plethora  of studies on the different class of characteristic kernels over locally
compact spaces. 
For example, \citet{steinwart2001influence} proved that Gaussian
kernel is characteristic on compact sets,  \citet[Theorem 9]{sriperumbudur2010hilbert}
showed that Gaussian kernel is characteristic on the whole space $\mathbb{R}^{d}$, and 
\citet{Simon-Gabriel2018} studied the connection between various
concepts of kernels such as universality, characteristic and positive
definiteness of kernels. 
%As a general example Gaussian kernel is a prominent kernel which is characteristic for many finite-dimensional spaces.

Given a separable Hilbert space $\mathbb{H}$,
any integrally strictly positive definite kernel is characteristic
\citep[Theorem 7]{sriperumbudur2010hilbert}, however, it is not clear which kernels are integrally strictly positive definite over infinite-dimensional separable Hilbert spaces. To the best of our knowledge, there is no study on 
existence and construction of characteristic kernels for infinite-dimensional spaces.
The following two theorems, proofs of which are provided in Appendix, try to tackle this problem. In
Theorem \ref{thm:existanceOfCharKer_l2}, the result of \citet[Threorem 3.14]{steinwart2019strictly} is used 
to show the existence of a continuous characteristic kernel for infinite-dimensional
separable Hilbert spaces, and Theorem \ref{thm:GausCahr_c00} shows
that Gaussian kernel is characteristic for $c_{00}$, the infinite-dimensional
inner product space of sequences vanishing at infinity, which
is dense in $\ell_{2}$.
\begin{thm}
	\label{thm:existanceOfCharKer_l2}Let $\mathbb{H}$ be an infinite-dimensional
	separable Hilbert space. There exists a continuous characteristic
	kernel on $\mathbb{H}$.
\end{thm}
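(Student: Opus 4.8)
\emph{Proof sketch.} The plan is to realise $\mathbb{H}$ as a \emph{dense} subset of a compact metric space, transplant a known characteristic kernel from that compact space to $\mathbb{H}$ by pullback, and check that density is exactly what is needed to keep the pullback characteristic. Fix an orthonormal basis $\{e_j\}_{j\ge 1}$ of $\mathbb{H}$ and define $\iota:\mathbb{H}\to\ell_2$ by $\iota(x)=\bigl(2^{-j}\arctan\langle x,e_j\rangle_{\mathbb{H}}\bigr)_{j\ge1}$. Since $|2^{-j}\arctan t|<2^{-j}\pi/2$, the range of $\iota$ is contained in the box $K:=\prod_{j\ge1}[-2^{-j}\pi/2,\ 2^{-j}\pi/2]\subset\ell_2$, which is closed, bounded and has uniformly small tails $\sum_{j>N}(2^{-j}\pi/2)^2\to0$, hence is a compact metric space. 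I would then record the three elementary properties of $\iota$: it is continuous (coordinatewise convergence together with dominated convergence of the series, the summable dominating sequence being $\pi^2 4^{-j}$); it is injective, because $\arctan$ is strictly monotone; and its range is dense in $K$ (given $z\in K$ and $\varepsilon>0$, match the first $N$ coordinates to within $\varepsilon/(2\sqrt N)$ by a suitable finite combination $\sum_{j\le N}\tan(2^j z_j')e_j$, and let the tail, of size at most $\sum_{j>N}(2^{-j}\pi/2)^2$, be swamped by choosing $N$ large).

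Next I would invoke \citet[Theorem 3.14]{steinwart2019strictly} to obtain a continuous characteristic kernel $k_0$ on the compact metric space $K$, and set $k(x,y):=k_0\bigl(\iota(x),\iota(y)\bigr)$. That $k$ is a positive definite kernel on $\mathbb{H}$ is immediate: symmetry and the quadratic-form inequality pass verbatim from $k_0$ because $\iota(x_i)\in K$ for all $i$; $k$ is continuous as a composition of continuous maps; and $k$ is bounded by $\sup_{K\times K}k_0<\infty$. Boundedness also ensures $\mathbb{E}_P\bigl(\sqrt{k(X,X)}\bigr)<\infty$, so $m_P$ exists in $\mathcal{H}_k$ for \emph{every} Borel probability measure $P$ on $\mathbb{H}$ and characteristicness may be tested against all of $\mathcal{M}_1(\mathbb{H})$.

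The substantive step is to verify that $k$ is characteristic. Suppose $P_1,P_2$ are probability measures on $\mathbb{H}$ with $m_{P_1}=m_{P_2}$ for $k$, and push them forward to Borel probability measures $\widetilde P_i:=\iota_\ast P_i$ on $K$. A change of variables gives $m_{P_i}(x)=\int_K k_0(\iota(x),w)\,\widetilde P_i(dw)=\bigl(m^{k_0}_{\widetilde P_i}\bigr)(\iota(x))$, so the hypothesis says the two RKHS elements $m^{k_0}_{\widetilde P_1},m^{k_0}_{\widetilde P_2}\in\mathcal{H}_{k_0}$ agree on $\iota(\mathbb{H})$. Since $k_0$ is continuous on the compact set $K$, these kernel means are continuous on $K$; agreeing on the dense subset $\iota(\mathbb{H})$, they agree on all of $K$, hence coincide as elements of $\mathcal{H}_{k_0}$. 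Because $k_0$ is characteristic, $\widetilde P_1=\widetilde P_2$; and since $\iota$ is a continuous injection between Polish spaces it is, by the Lusin--Souslin theorem, a Borel isomorphism onto a Borel subset of $\ell_2$, so $\iota_\ast$ is injective on Borel measures and therefore $P_1=P_2$.

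I expect the density of $\iota(\mathbb{H})$ in $K$, in tandem with the continuity of the kernel means, to be the real crux: density is precisely what upgrades ``agreement of the kernel means on $\iota(\mathbb{H})$'' to ``equality in $\mathcal{H}_{k_0}$'', and without it the pullback of a characteristic kernel along a mere topological embedding need not remain characteristic. The remaining ingredients --- positive definiteness and continuity of $k$, existence of $m_P$, and the measure-theoretic injectivity of $\iota_\ast$ --- are routine.
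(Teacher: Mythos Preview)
Your proof is correct. Both your argument and the paper's pivot on \citet[Theorem 3.14]{steinwart2019strictly}, but the compactifications differ: the paper identifies $\mathbb{H}$ with $\ell_2$, uses the plain inclusion $\ell_2\hookrightarrow\overline{\mathbb{R}}^\infty$ where $\overline{\mathbb{R}}^\infty$ carries the product topology (compact, metrizable, Hausdorff), and then restricts the resulting characteristic kernel; you instead embed $\mathbb{H}$ via a weighted $\arctan$ into a Hilbert cube $K\subset\ell_2$. Your choice makes norm-continuity of the pullback kernel automatic, whereas the paper must prove a separate lemma comparing the product topology on $\overline{\mathbb{R}}^\infty$ with the norm topology on $\ell_2$ to secure continuity. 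Conversely, the paper argues characteristicness of the restriction tersely---Borel probability measures on $\ell_2$ are already Borel probability measures on $\overline{\mathbb{R}}^\infty$, so the injectivity of the embedding $P\mapsto m_P$ is inherited---while you spell the mechanism out via density of the range, continuity of the kernel means, and Lusin--Souslin to recover the original measures. One minor remark: your density step, while valid, can be shortened. The pushforward preserves the MMD double integral,
\[
\mathrm{MMD}_k(P_1,P_2)^2=\iint k_0(\iota x,\iota y)\,d(P_1{-}P_2)(x)\,d(P_1{-}P_2)(y)=\mathrm{MMD}_{k_0}(\widetilde P_1,\widetilde P_2)^2,
\]
so measurable injectivity of $\iota$ alone already forces $\widetilde P_1=\widetilde P_2\Rightarrow P_1=P_2$; density of $\iota(\mathbb{H})$ in $K$ is not strictly needed.
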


\begin{thm}
	\emph{\label{thm:GausCahr_c00}Let $c_{00}$ be the space of eventually
		zero sequences in $\mathbb{R}^{\infty}$. The Gaussian kernel
		defined as $k\left(x,y\right)=e^{-\sigma\left\Vert x-y\right\Vert _{2}^{2}}$
		is characteristic on $c_{00}$.}
\end{thm}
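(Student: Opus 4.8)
The plan is to use the exhaustion $c_{00}=\bigcup_{D\ge 1}V_D$, where $V_D:=\{x\in c_{00}:x_j=0\ \text{for}\ j>D\}$ is an isometric copy of $\mathbb{R}^D$, together with the classical fact that the Gaussian kernel on $\mathbb{R}^D$ is \emph{integrally} strictly positive definite (a consequence of Bochner's theorem, since its spectral density is everywhere positive). Integral strict positive definiteness gives more than characteristicness: on $\mathbb{R}^D$ the map $\nu\mapsto m_\nu$ is injective on \emph{all} finite signed measures, because $m_{\mu}=m_{\nu}$ forces $0=\|m_{\mu-\nu}\|_{\mathcal H_k}^2=\iint k\,d(\mu-\nu)\,d(\mu-\nu)$ and hence $\mu=\nu$. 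So let $P_1,P_2$ be Borel probability measures on $c_{00}$ with $m_{P_1}=m_{P_2}$; the target is $P_1=P_2$. I will first show that $P_1$ and $P_2$ have identical finite-dimensional marginals, and then invoke the standard fact that a Borel probability measure on the separable metric space $(c_{00},\|\cdot\|_2)$ is determined by its finite-dimensional marginals (the cylinder sets generate $\mathcal B(c_{00})$).

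The engine is a tail factorization. Fix $D$ and $y\in V_D$, and write each $x\in c_{00}$ as $x=(x^{(D)},x^{(>D)})$ with $x^{(D)}=(x_1,\dots,x_D)$. Since $y$ vanishes past coordinate $D$,
\[
\|x-y\|_2^2=\|x^{(D)}-y^{(D)}\|_{\mathbb R^D}^2+\|x^{(>D)}\|_2^2,\qquad e^{-\sigma\|x-y\|_2^2}=e^{-\sigma\|x^{(D)}-y^{(D)}\|_{\mathbb R^D}^2}\cdot e^{-\sigma\|x^{(>D)}\|_2^2}.
\]
Let $\tilde P_{i,D}$ be the pushforward under $x\mapsto x^{(D)}$ of the finite measure $e^{-\sigma\|x^{(>D)}\|_2^2}\,P_i(dx)$ on $c_{00}$. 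Then for every $y\in V_D$ one gets $m_{P_i}(y)=\int_{\mathbb R^D}e^{-\sigma\|u-y^{(D)}\|^2}\,\tilde P_{i,D}(du)$; that is, the restriction of $m_{P_i}$ to $V_D\cong\mathbb R^D$ is exactly the Gaussian kernel mean embedding on $\mathbb R^D$ of the finite measure $\tilde P_{i,D}$. The hypothesis $m_{P_1}=m_{P_2}$ makes these restrictions agree for every $D$, so integral strict positive definiteness of the Gaussian kernel on $\mathbb R^D$ yields $\tilde P_{1,D}=\tilde P_{2,D}$ for every $D$.

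It remains to turn the weighted projections into genuine marginals. Fix $D$; for $D'>D$, projecting $\tilde P_{i,D'}$ onto its first $D$ coordinates gives, on $\mathcal B(\mathbb R^D)$, the measure $B\mapsto\int_{c_{00}}\mathbf{1}[x^{(D)}\in B]\,e^{-\sigma\|x^{(>D')}\|_2^2}\,P_i(dx)$. Every $x\in c_{00}$ is eventually zero, so $\|x^{(>D')}\|_2^2\downarrow 0$ and $e^{-\sigma\|x^{(>D')}\|_2^2}\uparrow 1$ as $D'\to\infty$; by monotone convergence the above measure converges setwise to $B\mapsto P_i(\{x:x^{(D)}\in B\})$, the $D$-dimensional marginal of $P_i$. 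Because $\tilde P_{1,D'}=\tilde P_{2,D'}$ for every $D'$, the two limits coincide, so all finite-dimensional marginals of $P_1$ and $P_2$ agree, and therefore $P_1=P_2$; hence the Gaussian kernel is characteristic on $c_{00}$.

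I expect the main obstacle to be not any single deep step but the measure-theoretic bookkeeping that makes each reduction legitimate: that $x\mapsto\|x^{(>D)}\|_2^2=\sum_{j>D}x_j^2$ and the weight $e^{-\sigma\|x^{(>D)}\|_2^2}$ are Borel measurable, so $\tilde P_{i,D}$ is a genuine finite Borel measure on $\mathbb R^D$ with $\int\sqrt{k}\,d\tilde P_{i,D}=\tilde P_{i,D}(\mathbb R^D)\le 1<\infty$ and thus a well-defined element of the $\mathbb R^D$ Gaussian RKHS; and that on $c_{00}$ the trace of the cylinder $\sigma$-algebra equals the $\|\cdot\|_2$-Borel $\sigma$-algebra (every open ball is a countable Boolean combination of cylinder sets), which is what lets finite-dimensional marginals determine the measure. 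The one conceptually non-routine point is that the auxiliary measures $\tilde P_{i,D}$ are sub-probability, not probability, measures, so one genuinely needs the \emph{integral} strict positive definiteness of the Gaussian kernel on $\mathbb R^D$ --- injectivity of the mean map on all finite signed measures --- rather than merely its being characteristic for probability measures.
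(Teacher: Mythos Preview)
Your proof is correct and follows a genuinely different route from the paper's. The paper works Fourier-analytically: it writes the Gaussian kernel as the characteristic function of an infinite product Gaussian measure $\Lambda_{2\sigma}$ on $(\mathbb{R}^{\infty},\mathcal{B}(\mathbb{R}^{\infty}))$, i.e.\ $e^{-\sigma\|x\|^{2}}=\int_{\mathbb{R}^{\infty}}e^{-i\langle\omega,x\rangle}\Lambda_{2\sigma}(d\omega)$ for $x\in c_{00}$, expands $\gamma_{k}(\mathbb{P},\mathbb{Q})^{2}=\int_{\mathbb{R}^{\infty}}|\phi_{\mathbb{P}}(\omega)-\phi_{\mathbb{Q}}(\omega)|^{2}\Lambda_{2\sigma}(d\omega)$ via Fubini, and then uses that $\text{supp}(\Lambda_{2\sigma})=\mathbb{R}^{\infty}$ together with an approximation argument to force $\phi_{\mathbb{P}}=\phi_{\mathbb{Q}}$ on $c_{00}$ and hence on $\ell_{2}$. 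Your argument instead exploits the finite-dimensional exhaustion $c_{00}=\bigcup_{D}V_{D}$: the tail factorization identifies $m_{P_i}|_{V_D}$ with the $\mathbb{R}^{D}$-Gaussian mean embedding of a weighted projection $\tilde P_{i,D}$, the finite-dimensional integral strict positive definiteness forces $\tilde P_{1,D}=\tilde P_{2,D}$, and monotone convergence in $D'$ (which works precisely because every $x\in c_{00}$ is eventually zero) upgrades this to equality of all finite-dimensional marginals. The paper's approach yields an explicit spectral representation of the squared MMD and connects more directly to Bochner-type theory, at the cost of handling characteristic functions defined on $\mathbb{R}^{\infty}$ rather than on the dual $\ell_{2}$; your approach is more elementary, needing only the standard $\mathbb{R}^{D}$ result plus routine measure-theoretic bookkeeping, and makes transparent exactly where the $c_{00}$ structure (as opposed to $\ell_{2}$) is used. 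Your caveat that one needs injectivity on \emph{finite signed} measures, not just probability measures, is well placed and correctly handled by integral strict positive definiteness.
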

Beside what are presented in Theorem \ref{thm:existanceOfCharKer_l2} and Theorem \ref{thm:GausCahr_c00}, we show in Proposition \ref{prop:GausKMF} that Gaussian kernel is characteristic for the family of Gaussian probability measures over $\mathbb{H}$.% $\ell_{2}$.
\section{Maximum Kernel Mean Estimation}\label{sec3}
In the context of multivariate statistics, the density function is
considered as one of the most ubiquitous tools in statistical inference.
Density is a non-negative function, which represents the amount of probability
mass in a point or concentration of probability measure in a very small neighborhood.
% of the point relative to a Haar base measure.
Typically a nominated family of probability
measures is presented by the corresponding family of densities, and the aim is to choose a density from this family, which is the most likely one that generates a set of observations obtained by a probability-based survey sample. The aforementioned family of probability measures usually parameterized
by a parameter $\theta$ taking value either in a subset $\Theta$ of a finite-dimensional or infinite-dimensional
space.

Suppose that  $\left\{P_{\theta},~\theta\in\Theta\right\}$ is a nominated family of probability
measures indexed by $\theta$. The idea behind MLE is as follows: Suppose
we randomly survey the population according to a sampling method and the result is an observation
$y$. If $\theta$ is unknown, an estimation of $\theta$ is one for
which $P_{\theta}$ is the most likely generator of $y$. If the density function $f_{\theta}=dP_{\theta}/d\lambda$
exists, we seek for a $\theta$ for which $f_{\theta}(x)$ is of maximum value.

What makes a density function suitable for this kind of inference is
the base measure $\lambda$ where the density is defined relative
to it. A counting measure or Lebesgue measure are suitable options
in finite-dimensional spaces. These base measures are positive, locally
finite and translation invariant and a nontrivial measure
with these properties does not exist in an infinite-dimensional separable
Hilbert space \citep{Gill2014}.

Employing the kernel mean function, we can introduce a rather similar idea to likelihood-based estimation in infinite-dimensional spaces. Suppose $k$ is a bounded, continuous, and 
translation-invariant characteristic kernel as described in Theorem
\ref{thm:thm1}, such as Gaussian kernel $k(x,y)=e^{-\sigma\parallel x-y\parallel_{\mathbb{H}}^{2}}$
for a fixed $\sigma>0$. The kernel mean function $m_{P}(\cdot)$ is a bounded function over $\mathbb{H}$ which reflects the concentration of $P$ on a small neighborhood of $y\in\mathbb{H}$. Consider
the family of probability measures $M=\{P_{\theta},~\theta\in\Theta\}$
and its counterpart family of kernel mean functions $\{m_{P_{\theta}}(.);\:\theta\in\Theta\}$. Assume that  
$\theta$ is unknown and the endeavor is to estimate it through an observed
random sample $y$ from the population. Again the aim is to pick a $\widehat{\theta}\in\Theta$
such that $P_{\hat{\theta}}$ is the most likely generator of 
$y$. Thus, it seems possible to estimate $\theta$ by $\widehat{\theta}=\sup_{\theta\in\Theta}m_{P_{\theta}}(y)$.

In this section, we derive the kernel mean embedding of probability measures induced by Functional response models, and we show how  parameter estimation and hypothesis testing are capable in this framework.

\subsection{Kernel Mean Embedding of Gaussian Probability Measure}

The assumption of Gaussianity is prevalent and fundamental to many statistical problems in the context of functional data analysis, including functional response regressions, functional one-way ANOVA, and testing for homogeneity of covariance operators. In this regard, it is desirable to study the kernel mean embedding of Gaussian probability measures induced by these category of models.

Let $\mathbb{H}$ be an arbitrary infinite-dimensional separable Hilbert Space. An $\mathbb{H}$-valued
random element is said to be a Gaussian random element with the mean function
$\mu$ and covariance operator $C$, for any $a\in\mathbb{H}$,
we have $\left\langle a,X\right\rangle \sim N\left(\left\langle a,\mu\right\rangle ,\left\langle Ca,a\right\rangle \right)$.
A Gaussian random element has a finite second moment, and its covariance
operator is trace class \citep{maniglia2004gaussian}. Let $\left(\lambda_{i},\psi_{i}\right)_{i\geq1}$
be the eigensystem of $C$ and $\mathbb{H}$ be an arbitrary function space such as $L^{2}[0,1]$, then
the kernel of integral operator $C$ admits the decomposition
$k_{C}(s,t)=\sum_{j\geq1}\lambda_j\psi_{j}(s)\psi_{j}(t)=Cov\left[X(s),X(t)\right]$ \citep{hsing2015theoretical}.
Kernel mean function of the Gaussian family of probability measures with mean $\mu$
and covariance operator $C$ and its uniqueness is given in the following proposition and is proved in the appendix.
\begin{prop}\label{prop:GausKMF}
	Let $Y\sim\mathcal{N}\left(\mu,C\right)$ i.e. $\left\langle a,Y\right\rangle \sim\mathcal{N}\left(\left\langle a,\mu\right\rangle ,\left\langle Ca,a\right\rangle \right)$
	then for a Gaussian kernel, 
	\begin{align*}
	m_{P}(x) & =\int_{\mathbb{H}}\hspace{-0.2em}e^{-\sigma\left\Vert x-y\right\Vert _{\mathbb{H}}^{2}}\mathcal{N}\left(\mu,C\right)\left(dy\right)=\left|I+2\sigma C\right|^{-1/2}e^{-\sigma\left\langle \left(I+2\sigma C\right)^{-1}\left(x-\mu\right),\left(x-\mu\right)\right\rangle },
	\end{align*}
	the kernel mean embedding is injective and 
	\begin{align*}
	\left\Vert m_{P}\right\Vert _{\mathcal{H}_{k}}^{2} & =\left|I+4\sigma C\right|^{-{1}/{2}}.
	\end{align*}
\end{prop}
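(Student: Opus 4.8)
The plan is to diagonalize the Gaussian measure via its Karhunen--Lo\`eve expansion, reduce the integral over $\mathbb{H}$ to an infinite product of one-dimensional Gaussian integrals, and then recognize the product and the exponent as a Fredholm determinant and a quadratic form. First I would fix the eigensystem $(\lambda_j,\psi_j)_{j\ge1}$ of $C$ and write $Y=\mu+\sum_{j\ge1}\sqrt{\lambda_j}\,Z_j\psi_j$ with $(Z_j)_{j\ge1}$ i.i.d.\ standard normal; this is legitimate because a Gaussian random element takes values in $\mathbb{H}$ almost surely and its covariance operator is trace class. Setting $x_j=\langle x,\psi_j\rangle$ and $\mu_j=\langle\mu,\psi_j\rangle$, Parseval's identity gives $\Vert x-Y\Vert_{\mathbb{H}}^2=\sum_{j\ge1}(x_j-\mu_j-\sqrt{\lambda_j}Z_j)^2$ almost surely. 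Since the partial sums increase to this limit and $e^{-\sigma t}\le1$ for $t\ge0$, dominated convergence lets me move the expectation inside the limit of partial products, and independence of the $Z_j$ factorizes it into $m_P(x)=\prod_{j\ge1}\mathbb{E}\bigl[e^{-\sigma(x_j-\mu_j-\sqrt{\lambda_j}Z_j)^2}\bigr]$.

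Next, a routine completion of the square gives $\mathbb{E}\bigl[e^{-\sigma(a-\sqrt{b}\,Z)^2}\bigr]=(1+2\sigma b)^{-1/2}\exp\!\bigl(-\sigma a^2/(1+2\sigma b)\bigr)$ for $Z\sim\mathcal{N}(0,1)$, $a\in\mathbb{R}$, $b\ge0$. Applying this with $a=x_j-\mu_j$ and $b=\lambda_j$ and collecting factors yields $m_P(x)=\bigl(\prod_{j\ge1}(1+2\sigma\lambda_j)^{-1/2}\bigr)\exp\!\bigl(-\sigma\sum_{j\ge1}(x_j-\mu_j)^2/(1+2\sigma\lambda_j)\bigr)$. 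The infinite product converges to a positive number because $\sum_j\log(1+2\sigma\lambda_j)\le2\sigma\sum_j\lambda_j<\infty$, and by definition it equals the Fredholm determinant $|I+2\sigma C|^{-1/2}$; the series in the exponent converges since it is bounded by $\sum_j(x_j-\mu_j)^2=\Vert x-\mu\Vert_{\mathbb{H}}^2$, and, because $(I+2\sigma C)^{-1}$ is the bounded self-adjoint operator with eigenvalues $(1+2\sigma\lambda_j)^{-1}$ on the eigenvectors $\psi_j$, the exponent equals $\langle(I+2\sigma C)^{-1}(x-\mu),x-\mu\rangle$. This is precisely the asserted formula.

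For injectivity on the Gaussian family, suppose $m_{\mathcal{N}(\mu_1,C_1)}=m_{\mathcal{N}(\mu_2,C_2)}$ pointwise on $\mathbb{H}$ and set $A_i=(I+2\sigma C_i)^{-1}$. Taking logarithms of the two explicit expressions and expanding about $0$, the identity takes the form $\langle(A_1-A_2)x,x\rangle+2\langle v,x\rangle=\mathrm{const}$ for all $x\in\mathbb{H}$, where $v$ is a fixed vector determined by $A_1\mu_1$ and $A_2\mu_2$. Replacing $x$ by $tx$ and comparing powers of $t$ forces $\langle(A_1-A_2)x,x\rangle=0$ and $\langle v,x\rangle=0$ for all $x$; the first, with $A_1-A_2$ self-adjoint, gives $A_1=A_2$ hence $C_1=C_2$ by polarization, after which $v=A_1(\mu_1-\mu_2)$ and injectivity of $A_1$ give $\mu_1=\mu_2$. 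For the squared norm, boundedness of $k$ (here $k(x,x)=1$) ensures $m_P\in\mathcal{H}_k$, and the identity $\mathbb{E}_P[f(X)]=\langle f,m_P\rangle_{\mathcal{H}_k}$ with $f=m_P$ gives $\Vert m_P\Vert_{\mathcal{H}_k}^2=\int\!\!\int e^{-\sigma\Vert x-y\Vert_{\mathbb{H}}^2}P(dx)P(dy)=\mathbb{E}\bigl[e^{-\sigma\Vert X-Y\Vert_{\mathbb{H}}^2}\bigr]$ for independent $X,Y\sim\mathcal{N}(\mu,C)$; since $X-Y\sim\mathcal{N}(0,2C)$, evaluating the formula already obtained at mean $0$, covariance $2C$, and argument $0$ gives $|I+4\sigma C|^{-1/2}$.

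I expect the only real obstacle to be the bookkeeping in the infinite-dimensional reduction: establishing the almost-sure Parseval identity, justifying the interchange of expectation with the infinite product, and ensuring convergence and nonvanishing of the Fredholm determinant. All of these hinge on the covariance operator being trace class, which is automatic for Gaussian random elements on $\mathbb{H}$, so I anticipate no genuine difficulty beyond careful verification.
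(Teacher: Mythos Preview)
Your proposal is correct. The derivation of the closed form for $m_P(x)$ via Karhunen--Lo\`eve diagonalization essentially reproves, coordinate by coordinate, the integral identity that the paper imports as Lemma~\ref{lem:Maniglia2004prop}; the paper simply cites that result and applies the translation $z=y-x$. Your computation of $\Vert m_P\Vert_{\mathcal{H}_k}^2$ via $X-Y\sim\mathcal{N}(0,2C)$ is the same double-integral idea the paper uses, just packaged slightly differently.

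The genuinely different step is injectivity. You argue pointwise: equate the two explicit expressions for $m_{P_i}(x)$, take logarithms, scale $x\mapsto tx$, and separate the quadratic, linear, and constant terms in $t$ to force first $A_1=A_2$ (hence $C_1=C_2$) and then $\mu_1=\mu_2$. This is elementary and self-contained. The paper instead computes $\langle m_{P_1},m_{P_2}\rangle_{\mathcal{H}_k}$ explicitly as a second Gaussian integral, obtains a closed form for $\Vert m_{P_1}-m_{P_2}\Vert_{\mathcal{H}_k}^2$, and then invokes the log-determinant convexity of Lemma~\ref{lem:logdetconcavity} (a Ky Fan--type inequality, Theorem~\ref{thm:(minh2017thm1)}) to conclude that this vanishes only when $C_1=C_2$ and $\mu_1=\mu_2$. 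Your route avoids the operator inequality entirely and is arguably cleaner for the bare injectivity claim; the paper's route, though heavier, yields as a by-product the explicit MMD formula between two Gaussians, which is exactly what is used later to build the test statistics in Section~\ref{sec::aplications}.
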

Consider a random sample $Y_{i}\in L^{2}[0,1],~i=1,\ldots,n$ of independent and identically random elements with distribution $\mathcal{N}\left(\mu,C\right)$ 
%with $k_{C}(s,t)=\sum_{i\geq1}\lambda_{i}\psi_{i}(s)\psi_{i}(t)$ in which $\left(\lambda_{i},\psi_{i}\right)_{i\geq1}$ are the eigenvalues and eigenfunctions of $C$
. By choosing a suitable characteristic kernel for the product space $\left(L^{2}[0,1]\right)^{n}$, kernel mean embedding of the induced probability measure by the random sample $\left\{Y_i\right\}$ i.e. $\otimes_{i=1}^n\mathcal{N}\left(\mu,C\right)$ can be computed. Let $k$ be the Gaussian kernel, then according to the following theorem, which is proved in the appendix, $\prod_{i=1}^{n}k(\cdot,\cdot)$ and $\sum_{i=1}^{n}k(\cdot,\cdot)$
are characteristic kernels for the family of product measures on $\left(L^{2}[0,1]\right)^{n}$.
\begin{prop}
	\label{thm:tensorProdCharKer}Let $k\left(\cdot,\cdot\right)$ be
	a characteristic kernel defined over a separable Hilbert space $\mathbb{H}$,
	then product-kernel $$k_P(\cdot,\cdot):\mathbb{H}^{n}\times\mathbb{H}^{n}\longrightarrow\mathbb{R}\quad\quad\left(\left\{ x_{i}\right\} ,\left\{ y_{i}\right\} \right)\mapsto\prod_{i=1}^{n}k(x_{i},y_{i})$$
	and sum-kernel
	\[
	k_S(\cdot,\cdot):\mathbb{H}^{n}\times\mathbb{H}^{n}\longrightarrow\mathbb{R}\quad\left(\left\{ x_{i}\right\} ,\left\{ y_{i}\right\} \right)\mapsto\sum_{i=1}^{n}k(x_{i},y_{i})
	\]
	are two characteristic kernels for the family of product probability measures
	$\mathscr{P}^{n}=\left\{ \otimes_{j=1}^{n}P_{j}\mid P_{j}\in\mathscr{P},~j=1,\ldots,n\right\} $
	on $\mathbb{H}^{n}$.
\end{prop}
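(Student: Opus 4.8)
\emph{Plan.} The idea is to make the two embeddings explicit and then recover each marginal. First I would record that the RKHS of the product kernel is (isometric to) the $n$-fold Hilbert tensor product $\mathcal{H}_k^{\otimes n}$, since $\prod_i k(x_i,y_i)=\langle\bigotimes_i k(x_i,\cdot),\bigotimes_i k(y_i,\cdot)\rangle$; moreover, for bounded $k$ every product probability measure $\mathbb{P}=\bigotimes_{j=1}^n P_j$ satisfies $\mathbb{E}_{\mathbb P}\sqrt{k_P(X,X)}<\infty$, so $m^{k_P}_{\mathbb P}$ exists, and checking the defining integral on the spanning elements $\bigotimes_i k(x_i,\cdot)$ (with Fubini) gives $m^{k_P}_{\mathbb P}=\bigotimes_{j=1}^n m_{P_j}$. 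For the sum kernel I would instead only use that each single-coordinate function $f^{(i)}(x_1,\dots,x_n):=f(x_i)$, $f\in\mathcal{H}_k$, lies in $\mathcal{H}_{k_S}$ and that $\langle m^{k_S}_{\mathbb P},f^{(i)}\rangle_{\mathcal{H}_{k_S}}=\mathbb{E}_{P_i}[f]=\langle m_{P_i},f\rangle_{\mathcal{H}_k}$. In both cases it suffices to recover every $m_{P_j}$ from the embedding, for then $P_j=Q_j$ by characteristic-ness of $k$ and hence $\bigotimes_j P_j=\bigotimes_j Q_j$.

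\emph{Sum kernel.} This case is then immediate: if $m^{k_S}_{\bigotimes_j P_j}=m^{k_S}_{\bigotimes_j Q_j}$, pairing with $f^{(i)}$ gives $\langle m_{P_i},f\rangle=\langle m_{Q_i},f\rangle$ for every $f\in\mathcal{H}_k$ and every $i$, hence $m_{P_i}=m_{Q_i}$, hence $P_i=Q_i$, hence $\bigotimes_j P_j=\bigotimes_j Q_j$.

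\emph{Product kernel.} Here the hypothesis reads $\bigotimes_{j=1}^n m_{P_j}=\bigotimes_{j=1}^n m_{Q_j}$, an equality of elementary (rank-$\le 1$) tensors in $\mathcal{H}_k^{\otimes n}$. For the Gaussian kernel $m_P(x)=\int e^{-\sigma\|x-y\|_{\mathbb H}^2}P(dy)>0$ pointwise, so every factor is nonzero; this nonvanishing — automatic for the Gaussian and Laplace kernels used in the paper — is essential, since for a characteristic kernel with $k(x_0,x_0)=0$ one has $m_{\delta_{x_0}}\equiv 0$, so all product measures $\delta_{x_0}\otimes P$ embed to $0$ and injectivity on $\mathscr{P}^n$ is lost. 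Uniqueness of the factorization of a nonzero elementary tensor then yields scalars $c_1,\dots,c_n\neq 0$ with $\prod_j c_j=1$ and $m_{P_j}=c_j\,m_{Q_j}$ for every $j$, and it remains to show $c_j=1$.

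\emph{Pinning the scalars — the delicate point.} The obstruction is that there is no bounded linear functional on $\mathcal{H}_k$ returning the total mass $P(\mathbb H)$ — that would be $\langle m_P,\mathbf 1\rangle$, but $\mathbf 1\notin\mathcal{H}_k$ for the Gaussian RKHS — so $c_j$ cannot be read off by evaluating a continuous functional and one must descend to finite-dimensional marginals. Fix $j$ and a finite-dimensional $V\subset\mathbb H$ with orthogonal projection $\Pi_V$; using $\|x-y\|^2=\|x-\Pi_V y\|^2+\|y-\Pi_V y\|^2$ for $x\in V$, the restriction of $m_{P_j}$ to $V$ equals $v\mapsto\int_V e^{-\sigma\|v-u\|^2}\,\widetilde P_j^{V}(du)$, where $\widetilde P_j^{V}(A):=\int_{\{\Pi_V y\in A\}}e^{-\sigma\|y-\Pi_V y\|^2}P_j(dy)$ is a finite measure on $V$ (and similarly for $Q_j$). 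Thus $m_{P_j}=c_j m_{Q_j}$ gives $\Lambda_\sigma*\widetilde P_j^{V}=c_j\,\Lambda_\sigma*\widetilde Q_j^{V}$ with $\Lambda_\sigma(v)=e^{-\sigma\|v\|^2}$; since $\widehat{\Lambda_\sigma}>0$ everywhere on $V$, dividing it out yields $\widehat{\widetilde P_j^{V}}=c_j\,\widehat{\widetilde Q_j^{V}}$, and evaluating at $0$ gives $\widetilde P_j^{V}(V)=c_j\,\widetilde Q_j^{V}(V)$. Finally, taking an increasing sequence $V_m$ with dense union, $\widetilde P_j^{V_m}(V_m)=\int e^{-\sigma\|y-\Pi_{V_m}y\|^2}P_j(dy)\to 1$ by dominated convergence (integrands $\le 1$ and $\to 1$ since $\Pi_{V_m}y\to y$), likewise for $Q_j$, so $c_j=1$. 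Hence $m_{P_j}=m_{Q_j}$, hence $P_j=Q_j$ for every $j$, hence $\bigotimes_j P_j=\bigotimes_j Q_j$. I expect the tensor-product bookkeeping and the sum-kernel case to be routine, and this scalar-pinning limit to be the only genuinely delicate step.
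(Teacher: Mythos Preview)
Your proposal is correct and takes a genuinely more careful route than the paper's own proof. The paper works entirely pointwise: it computes $m_{\otimes_j P_j}(x_1,\ldots,x_n)=\prod_j m_{P_j}(x_j)$ (respectively $\sum_j m_{P_j}(x_j)$), observes that $\mathbb P\neq\mathbb Q$ forces some $m_{P_i}\neq m_{Q_i}$, and then simply asserts that this yields a point $(x_1,\ldots,x_n)$ at which the product (respectively sum) of values differs. You instead work in the RKHS, identifying $m^{k_P}_{\otimes_j P_j}$ with the elementary tensor $\bigotimes_j m_{P_j}\in\mathcal H_k^{\otimes n}$, and for the sum kernel you pair against single-coordinate test functions $f^{(i)}$ to recover each $m_{P_i}$ directly.

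The substantive difference is your scalar-pinning step for the product kernel. The paper's ``thus there exists $(x_n)$ with $\prod m_{P_i}(x_i)\neq\prod m_{Q_i}(x_i)$'' tacitly assumes that one cannot have $m_{P_j}=c_j\,m_{Q_j}$ with $\prod_j c_j=1$ and some $c_j\neq 1$; this is exactly the obstruction you isolate, and your projection to finite-dimensional $V$ followed by Fourier deconvolution and the dominated-convergence limit $\widetilde P_j^{V_m}(V_m)\to 1$ is a clean way to force $c_j=1$ for the Gaussian kernel. (A shorter variant, if one is willing to assume it: if $k$ were known to be integrally strictly positive definite over finite signed measures on $\mathbb H$, then $m_{P_j}=c_j m_{Q_j}$ would say the signed measure $P_j-c_jQ_j$ embeds to zero, hence vanishes, hence $c_j=1$ by total mass. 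The paper has not established this on infinite-dimensional $\mathbb H$, so your finite-dimensional reduction is the honest route.) What your approach buys is a complete argument for the Gaussian kernel together with a precise diagnosis of why the product-kernel claim may fail for an arbitrary characteristic $k$; what the paper's approach buys is brevity, at the cost of leaving the scalar ambiguity unaddressed. Your sum-kernel argument via pairing with $f^{(i)}$ is likewise cleaner than the paper's pointwise version, since it sidesteps the analogous additive-constant ambiguity $m_{P_j}=m_{Q_j}+c_j$, $\sum_j c_j=0$, altogether.
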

For the case of Gaussian product-kernel, given a
simple random sample $Y_{1},\ldots,Y_{n}$ drawn from the Gaussian distribution, 
kernel mean function is $$m_{\otimes_{i=1}^{n}\mathcal{N}\left(\mu,C\right)}(y_{1},\ldots,y_{n})=\int e^{{-\sigma{\sum_{i=1}^{n}}\parallel y_{i}-z_{i}\parallel}^2}\otimes_{i=1}^{n}\mathcal{N}\left(\mu,C\right)\left(dz_{i}\right),$$
which its logarithm equals to 
\begin{align}
\log~& m_{\otimes_{i=1}^{n}\mathcal{N}\left(\mu,C\right)}(y_{1},\ldots,y_{n})\nonumber \\ &=\sum_{i=1}^{n}\sum_{j\geq1}\frac{-\sigma}{1+2\sigma\lambda_{j}}\langle y_{i}-\mu,\psi_{j}\rangle^{2}-\frac{n}{2}\sum_{j\geq1}\log\left(1+2\sigma\lambda_{j}\right),\label{eq:modelReg}
\end{align}
where  $y_{1},\ldots,y_{n}$ are the observation counterparts of $Y_{1},\ldots,Y_{n}$. By defining
sample mean function and sample covariance operator as $\bar{y}=\frac{1}{n}\sum_{i=1}^{n}y_{i}$
and $\hat{C}_{Y}=\frac{1}{n}\sum_{i=1}^{n}(y_{i}-\bar{y})\otimes(y_{i}-\bar{y})$ respectively, 
then, the logarithm of the kernel mean function for Gaussian product-kernel also is equal to 
\begin{align}
\log ~& m_{\otimes_{i=1}^{n}\mathcal{N}\left(\mu,C_{Y}\right)}(y_{1},\ldots,y_{n})\nonumber\\
&=  \sum_{j\geq1}\frac{-n\sigma}{1+2\sigma\lambda_{j}}\left[\langle\hat{C}_{Y}\psi_{j},\psi_{j}\rangle+\langle\bar{y}-\mu,\psi_{j}\rangle^{2}\right]-\frac{n}{2}\sum_{j\geq1}\log\left(1+2\sigma\lambda_{j}\right).\label{eq:sufficientGaussian}
\end{align}
We can see that the kernel mean function is dependent on  $\{y_{1},y_{2},\ldots,y_{n}\}$ only through $\bar{y}$ and $\hat{C}_{Y}$. Since Gaussian-product Kernel is characteristic, Equation (\ref{eq:sufficientGaussian})
shows that for the family of Gaussian probability measures, $\left(\bar{y},\hat{C}_{Y}\right)$
is a typical joint sufficient statistic for parameters $\left(\mu,C_{Y}\right)$.
The possibility to identify sufficient statistics
through kernel mean functions, alongside Theorem 1 and Corollary 2, reveals how a kernel mean embedding of probability measure behave akin to density function over finite-dimensional spaces.

The location and covariance parameters of the distribution can
be estimated by maximizing $\log m_{\otimes_{i=1}^{n}\mathcal{N}\left(\mu,C\right)}(y_{1},\ldots,y_{n})$. The resulting estimator which  is slightly different in weights of components from the estimators
%which is slightly different in weights of components with the limiting
%estimator of parameters of $\mu_{i}$
one may obtain by the small
ball probability approximation proposed by \citet{delaigle2010defining}, or OLS approach. As it is highlighted in Proposition \ref{rem::deligel_limitingEstimator}, the Maximum Kernel Mean (MKM) estimator of the location parameters 
converges to OLS, and the limiting estimator one may obtain by the small-ball
probability approach, as $\sigma$ tends to infinity. It is also worth noting that although there is no estimation for covariance parameters by the small-ball probability approximation approach, there is an estimation for them by MKM. 
%There are also other advantages of MKM over small-ball probability approximation. The method proposed by \citet{delaigle2010defining} approximates the concentration of probability in a neighborhood of $y$ by a finite-dimensional approximation of the random element. The precision of this approximation depends on both the size of the neighborhood and also the probability measure itself. Thus, the approximated density is not comparable among different probability measures, though it is possible to compare the concentration of different probability measures by kernel mean function.

%Theorem \ref{thm:thm1} and Corollary \ref{cor:smallBallProb} reveal how  a kernel mean function is akin to a density function. The following proposition shows another aspect of this similarity and shows that it is possible to identify sufficient statistics 
%in the space of infinite-dimensional separable Hilbert spaces through characteristic kernels and kernel mean functions. 

%This way, it could be seen how a kernel mean embedding of probability measure behave akin to density function over finite-dimensional spaces.

In the the context of functional regression, as it is addressed in Section \ref{sec:functionalReg}, we may substitute a linear
model for $\mu$ in (\ref{eq:modelReg}), and estimate the parameters of the model either 
%by maximizing $\log m_{\otimes P(dY_{i})}(y_{1},\ldots,y_{n})$ or
$m_P\left(y_{1},\ldots,y_{n}\right)-\frac{1}{2}\left\Vert m_{P}\right\Vert_{\mathcal{H}_{k}}^{2}$ as in (\ref{formula:properScoringRule}), or only choose to maximize $\log m_{\otimes P(dY_{i})}(y_{1},\ldots,y_{n})$ for estimating the location parameters seeing as $\left\Vert m_{P}\right\Vert_{\mathcal{H}_{k}}^{2}$ does not depend on the location parameters.

The Kernel Mean approach also provides
a rich toolbox of kernel methods developed by the machine learning community, which can be used in statistical inference. To give just a few examples,
we can name Kernel Bayes Rule for Bayesian inference and Latent variable
modeling. The Maximum Mean Discrepancy (MMD) for hypothesis testing and
developing Goodness of Fit indices, and Hilbert Schmidt Independence
Criterion (HSIC) for measuring dependency between random elements \citep[see][]{muandet2017kernel,gretton2012kernel,Harchaoui2009ChangePoint,Tang2017Bernoulli}.
In section \ref{sec::aplications}, MMD is used to derive and introduce new tests for three main problems in functional data analysis, including Function-on-Scalar regression, one-way ANOVA, and testing for homogeneity of covariance operators. The power of these tests is studied and compared with competitors by simulation.
\subsection{MKM Estimation of Parameters in Function-on-Scalar Regression}\label{sec:functionalReg}
Let $Y$ be a Gaussian random element taking value in $\mathbb{H}$. Given a random sample of $Y$, we can employ the kernel mean function to %develop a Pseudo-likelihood to 
estimate the location and covariance parameters. Let $y_{1},y_{2},\ldots,y_{n}$ be $n$ independent realizations of $Y$ %with the common mean function $\mu$ and common covariance operator $C$.
% {\color{red}we maximize either $m_P\left(y_{1},\ldots,y_{n}\right)$ or minimizing strictly proper scoring rule in (\ref{formula:properScoringRule}), that is maximizing $m_P\left(y_{1},\ldots,y_{n}\right)-\frac{1}{2}\left\Vert m_{P}\right\Vert ^{2}$ where $P$ is the probability measure induced by $n$ independent copies of $Y$. Since $\left\Vert m_{P}\right\Vert ^{2}$ depends only on the eigenvalues of covariance operator, given the covariance operator, it is enough to maximize $m_P\left(\cdot\right)$ to estimate the location parameters.}  Consider 
according to the following Function-on-Scalar regression model: 
\begin{equation}\label{model:f-o-s1}
Y_i\left(t\right)=x_i^T\beta\left(t\right)+\varepsilon_i\left(t\right),~~~i=1,\ldots,n
\end{equation}
where $x_i$ is the vector of scalar covariates and $\beta$ is the vector of $p$ functional parameters. Residual functions $\varepsilon_i$ are $n$ independent copies of a Gaussian random element with mean function zero and covariance operator $C$. The following two propositions can be employed to obtain the MKM estimation of location and covariance parameters. 

\begin{prop}\label{prop:RMKM_loc}
	Let $y_{1},y_{2},\ldots,y_{n}$ be $n$ independent realizations of model (\ref{model:f-o-s1}), where $\varepsilon_i$ is an $\mathbb{H}$-valued Gaussian random element with mean function zero and covariance operator $C$. The MKM estimation of functional regression parameters coincide with the ordinary least square estimation.
\end{prop}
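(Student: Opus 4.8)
The plan is to reduce the maximization of the (log) kernel mean function over the regression parameters to a least-squares problem, using the explicit formula for the Gaussian kernel mean embedding from Proposition~\ref{prop:GausKMF} and its sum-over-samples form in \eqref{eq:modelReg}. First I would substitute the Function-on-Scalar mean $\mu=\mu_i=x_i^T\beta$ into \eqref{eq:modelReg}, so that the log kernel mean function of $\otimes_{i=1}^n\mathcal{N}(x_i^T\beta,C)$ evaluated at the observations $y_1,\dots,y_n$ becomes
\[
\log m(y_1,\dots,y_n)=\sum_{i=1}^{n}\sum_{j\geq1}\frac{-\sigma}{1+2\sigma\lambda_j}\langle y_i-x_i^T\beta,\psi_j\rangle^{2}-\frac{n}{2}\sum_{j\geq1}\log(1+2\sigma\lambda_j).
\]
The second term does not involve $\beta$, so maximizing in $\beta$ is equivalent to minimizing $\sum_{i=1}^n\sum_{j\geq1}w_j\langle y_i-x_i^T\beta,\psi_j\rangle^2$ with strictly positive weights $w_j=\sigma/(1+2\sigma\lambda_j)$.

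Next I would observe that, since $\{\psi_j\}$ is an orthonormal basis of $\mathbb H$, each inner product $\langle y_i-x_i^T\beta,\psi_j\rangle$ is linear in $\beta$; writing $\beta=\sum_{j}\beta^{(j)}\psi_j$ componentwise (with $\beta^{(j)}\in\mathbb R^p$ the vector of $j$-th Fourier coefficients of the $p$ parameter functions), the objective separates across $j$ into independent finite-dimensional weighted least-squares problems
\[
\sum_{i=1}^{n}\big(\langle y_i,\psi_j\rangle-x_i^T\beta^{(j)}\big)^2,
\]
the common positive weight $w_j$ being irrelevant to the argmin at each fixed $j$. Each such problem is the ordinary least-squares fit of the scalar scores $\langle y_i,\psi_j\rangle$ on the covariates $x_i$, whose solution $\widehat{\beta}^{(j)}=(X^TX)^{-1}X^T(\langle y_i,\psi_j\rangle)_i$ (assuming $X=[x_1^T;\dots;x_n^T]$ has full column rank) is exactly the $j$-th Fourier coefficient of the classical OLS estimator $\widehat\beta(t)=(X^TX)^{-1}X^TY(t)$. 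Reassembling $\widehat\beta=\sum_j\widehat\beta^{(j)}\psi_j$ shows the MKM estimator equals the OLS estimator.

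The main technical point to handle carefully is the interchange of the infinite sum over $j$ with the minimization, and convergence of the series: one should note that for the true/fitted residuals the series $\sum_j w_j\langle y_i-x_i^T\beta,\psi_j\rangle^2$ is dominated by $\sigma\sum_j\langle y_i-x_i^T\beta,\psi_j\rangle^2=\sigma\|y_i-x_i^T\beta\|_{\mathbb H}^2<\infty$ for every $\beta$ with finite-norm coefficient functions, so the objective is finite and the termwise (hence global) minimizer is the OLS solution; uniqueness follows from strict positivity of the weights and full rank of $X$. I would also remark that this argument is independent of whether one maximizes $\log m_P$ alone or the proper scoring criterion $m_P-\tfrac12\|m_P\|_{\mathcal H_k}^2$, since by Proposition~\ref{prop:GausKMF} the term $\|m_P\|_{\mathcal H_k}^2=|I+4\sigma C|^{-1/2}$ depends only on $C$ and not on $\beta$.
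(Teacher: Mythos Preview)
Your argument is correct. The paper takes a somewhat different route: rather than expanding in the eigenbasis $\{\psi_j\}$ and separating the objective into countably many independent scalar least-squares problems, it works directly with the operator form
\[
\log m_\beta(y_1,\dots,y_n)=-\sigma\sum_{i=1}^n\bigl\langle(I+2\sigma C)^{-1}(y_i-x_i^T\beta),\,y_i-x_i^T\beta\bigr\rangle+\text{const},
\]
computes the Fr\'echet derivative in the direction $h\in\mathbb H^p$,
\[
\Bigl[\tfrac{\partial}{\partial\beta}\log m_\beta\Bigr](h)=2\sigma\sum_{k=1}^p\Bigl\langle h_k,\,(I+2\sigma C)^{-1}\sum_{i=1}^n x_{ik}(y_i-x_i^T\beta)\Bigr\rangle,
\]
and reads off the normal equations $X^T(Y-X\beta)=0$ from its vanishing for all $h$; it then checks globality by writing $\beta=\hat\beta+\nu$ and observing that the cross term vanishes while $-\sigma\sum_i\langle(I+2\sigma C)^{-1}x_i^T\nu,x_i^T\nu\rangle\le 0$. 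Your coordinate-wise decomposition is more elementary---it avoids Fr\'echet calculus and makes the irrelevance of the positive weights $w_j$ to the argmin completely transparent---whereas the paper's operator approach is more compact and sidesteps the bookkeeping you (correctly) flag about interchanging the infinite sum over $j$ with the minimization. Incidentally, the eigenbasis decomposition you use here is precisely the device the paper employs later, in its proof of Proposition~\ref{rem::deligel_limitingEstimator}, to compare with the small-ball estimator.
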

\begin{proof}
	The logarithm of kernel mean function by (\ref{eq:modelReg}) equals to
	\begin{align}
	\log m_{\beta}(y_{1},\ldots,y_{n}):&=\log m_{\otimes_{i=1}^{n}\mathcal{N}\left(\mu_{i},C\right)}(y_{1},\ldots,y_{n})\nonumber\\
	&=-\sigma\sum_{i=1}^{n}\left\langle \left(I+2\sigma C\right)^{-1}\left(y_{i}-x_{i}^{T}\beta\right),y_{i}-x_{i}^{T}\beta\right\rangle\nonumber\\
	&\phantom{:=}-\frac{n}{2}\sum_{j\geq1}\log\left(1+2\sigma\lambda_{j}\right).\label{eq:fos1}
	\end{align}
	Fr\'echet derivation of (\ref{eq:fos1}) with respect to $\beta$
	is an operator from $\mathbb{H}^{p}$ to $\mathbb{R}$, i.e.
	\[
	\frac{\partial}{\partial\beta}\log m_{\beta}(y_{1},\ldots,y_{n}):\mathbb{H}^{p}\rightarrow\mathbb{R}.
	\]
	$\hat{\beta}$
	is a local extremum of $\log m_{\otimes_{i=1}^{n}\mathcal{N}\left(\mu,C\right)}(y_{1},\ldots,y_{n})$,
	if 
	\[
	\left[\frac{\partial}{\partial\beta}\log m_{\hat{\beta}}(y_{1},\ldots,y_{n})\right]\left(h\right)=0\qquad\forall h\in\mathbb{H}^{p}.
	\]
	Taking Fr\'echet derivation of (\ref{eq:fos1}) with respect to $\beta$, for an  arbitrary $h\in\mathbb{H}^{p}$ we have
	\begin{align*}
	\left[\frac{\partial}{\partial\beta}\log m_{\beta}(y_{1},\ldots,y_{n})\right]\hspace{-0.2em}\left(h\right) & =\left[\frac{\partial}{\partial\beta}\sum_{i=1}^{n}\left\langle \left(I+2\sigma C\right)^{-1}\hspace{-0.2em}\left(y_{i}-x_{i}^{T}\beta\right),y_{i}-x_{i}^{T}\beta\right\rangle\right]\hspace{-0.2em}\left(h\right)\\
	& =2\sigma\sum_{i=1}^{n}\left\langle \left(I+2\sigma C\right)^{-1}x_{i}^{T}h,y_{i}-x_{i}^{T}\beta\right\rangle \\
	& =2\sigma\sum_{k=1}^{p}\left\langle h_{k},\left(I+2\sigma C\right)^{-1}\sum_{i=1}^{n}x_{ik}\left(y_{i}-x_{i}^{T}\beta\right)\right\rangle 
	\end{align*}
	So if $\hat{\beta}$ is a local extremum of (\ref{eq:fos1}), for each
	$1\leq k\leq p$, we must have $$\sum_{i=1}^{n}x_{ik}\left(y_{i}-x_{i}^{T}\beta\right)=0,$$
	so $X^{T}\left(Y-X\beta\right)=0$, and consequently $\hat{\beta}=\left(X^{T}X\right)^{-1}X^{T}Y$.
	The remaining question arises here is that if $\hat{\beta}$ maximizes
	(\ref{eq:fos1}) or not. Let $\beta=\hat{\beta}+\nu$, then
	\begin{align*}
	\log m_{\beta}(y_{1},\ldots,y_{n})&=\log m_{\hat{\beta}}(y_{1},\ldots,y_{n})-\sigma\sum_{i=1}^{n}\left\langle \left(I+2\sigma C\right)^{-1}\left(x_{i}^{T}\nu\right),x_{i}^{T}\nu\right\rangle\\ &\leq\log m_{\hat{\beta}}(y_{1},\ldots,y_{n}),
	\end{align*}
	which completes the proof.
\end{proof}
From the last proposition, $\hat{\beta}=\left(X^{T}X\right)^{-1}X^{T}Y$ is the MKM estimator
of functional regression coefficients. It is also possible to derive a restricted MKM estimation of covariance operator with a similar approach to restricted ML. Let $A=\left[u_1,\ldots,u_{n-k}\right]$ be the first $n-k$ eigenvectors of $I-X\left(X^TX\right)^{-1}X^T$. Let $Y=\left[Y_i\right]_{i=1,\ldots,n}$ be a $n\times 1$ matrix, then $Y_i^*=u_i^TY$ is called the error contrast vector and is a sequence of $n-k$ independent and identically distributed random elements with mean function zero and common covariance operator $C$. We can then use the sequence $y_1^*,\ldots,y_{n-k}^*$ and employ Proposition \ref{prop:RMKM_cov} to estimate the covariance operator by $\hat{C}=\frac{1}{n-k}\sum_{i=1}^{n-k}y_i^*\otimes y_i^*$. 
\begin{prop}\label{prop:RMKM_cov}
	Let $y_{1},y_{2},\ldots,y_{n}$ be $n$ independent realizations of  $\mathbb{H}$-valued Gaussian random element with mean function zero, and covariance operator $C=\sum_{j\geq1} \lambda_j\psi_j\otimes\psi_j$.
	Let $\hat{C}=\frac{1}{n}\sum_{i=1}^{n} y_{i}\otimes y_{i}$, 
	then as $\sigma\to\infty$ the MKM estimator of $\left\{ \lambda_{j},\psi_{j}\right\} _{j\geq1}$
	converges to 
	\begin{enumerate}
		\item $\hat{\psi}_{k}=$ The $k$'th eigenfunction of $\hat{C}$
		\item $\hat{\lambda}_{k}=\frac{1}{n}\sum_{i=1}^{n}\left\langle y_{i},\hat{\psi}_{k}\right\rangle ^{2}$
	\end{enumerate}
\end{prop}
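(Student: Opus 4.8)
The plan is to identify the MKM estimator with the maximizer of the log kernel mean function of Equation~(\ref{eq:modelReg}) at $\mu=0$, to recast this maximization as a strictly concave program over covariance operators, to show that the maximizer must be diagonal in the eigenbasis of $\hat{C}$, and finally to pass to the limit $\sigma\to\infty$. Concretely, regard a candidate covariance operator as $D=\sum_{j\ge1}\lambda_{j}\psi_{j}\otimes\psi_{j}$, where $\{\psi_{j}\}$ is an orthonormal system and $\{\lambda_{j}\}$ a nonnegative trace-class sequence. Specializing~(\ref{eq:modelReg}) to $\mu=0$ and using $\sum_{i=1}^{n}\langle y_{i},\psi_{j}\rangle^{2}=n\langle\hat{C}\psi_{j},\psi_{j}\rangle$, the MKM objective equals, up to the positive factor $n$,
\[
G_{\sigma}(D)=-\tfrac{1}{2}\log\det\left(I+2\sigma D\right)-\sigma\,\mathrm{tr}\bigl((I+2\sigma D)^{-1}\hat{C}\bigr),
\]
with $\det$ denoting the Fredholm determinant $\prod_{j}(1+2\sigma\lambda_{j})$; the MKM estimator of $\{\lambda_{j},\psi_{j}\}_{j\ge1}$ is then the eigensystem of the maximizer of $G_{\sigma}$ over positive trace-class $D$.

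Next, put $A:=(I+2\sigma D)^{-1}$, so $A$ ranges over positive operators with $0\prec A\preceq I$ (and $I-A$ of trace class), and $\det(I+2\sigma D)=1/\det A$; hence $G_{\sigma}(D)=\tfrac{1}{2}\log\det A-\sigma\,\mathrm{tr}(A\hat{C})$, which is strictly concave in $A$ over a convex constraint set. Replacing a feasible $A$ by its block-diagonal compression relative to the eigenspace decomposition of $\hat{C}$ leaves $\mathrm{tr}(A\hat{C})$ unchanged, cannot decrease $\log\det A$ (the operator Hadamard/Fischer inequality), and preserves feasibility, so the maximizer commutes with $\hat{C}$ and, by strict concavity, is diagonal in the eigenbasis $\{\hat{\psi}_{k}\}$ of $\hat{C}$. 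Along the $\hat{\psi}_{k}$-direction one then maximizes $\tfrac{1}{2}\log\alpha-\sigma\hat{\lambda}_{k}\alpha$ over $0<\alpha\le1$, giving $\alpha_{k}=\min\{1,(2\sigma\hat{\lambda}_{k})^{-1}\}$. Translating back through $A=(I+2\sigma D)^{-1}$, the maximizing covariance operator is $\widehat{D}_{\sigma}=\sum_{k}\bigl(\hat{\lambda}_{k}-\tfrac{1}{2\sigma}\bigr)_{+}\hat{\psi}_{k}\otimes\hat{\psi}_{k}$, whose $k$-th eigenfunction equals $\hat{\psi}_{k}$ (for every $\sigma>1/(2\hat{\lambda}_{k})$) and whose $k$-th eigenvalue equals $(\hat{\lambda}_{k}-\tfrac{1}{2\sigma})_{+}$. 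Letting $\sigma\to\infty$, the MKM estimator therefore converges to the eigenfunctions $\hat{\psi}_{k}$ of $\hat{C}$ and to $\hat{\lambda}_{k}=\langle\hat{C}\hat{\psi}_{k},\hat{\psi}_{k}\rangle=\tfrac{1}{n}\sum_{i=1}^{n}\langle y_{i},\hat{\psi}_{k}\rangle^{2}$, which is the assertion.

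The delicate part is the infinite-dimensional bookkeeping: justifying the Fredholm-determinant identity and the trace-class conditions under the substitution $D\mapsto A$, establishing the operator form of the Hadamard/Fischer inequality used to diagonalize against $\hat{C}$, and noting that $\widehat{D}_{\sigma}$ is automatically finite-rank (of rank at most $\mathrm{rank}\,\hat{C}\le n$), which removes any convergence-of-series concerns. A more elementary variant that avoids determinants altogether is to first profile out each $\lambda_{j}$ from~(\ref{eq:modelReg}) in closed form — a one-variable optimization with the two regimes $2\sigma\langle\hat{C}\psi_{j},\psi_{j}\rangle\gtrless1$ — and then to maximize the resulting functional of $\{\langle\hat{C}\psi_{j},\psi_{j}\rangle\}$ over orthonormal systems, using that for large $\sigma$ it is Schur-convex and hence, by Schur--Horn, maximized at $\psi_{j}=\hat{\psi}_{j}$.
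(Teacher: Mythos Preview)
Your argument is correct and reaches the same endpoint as the paper, but the route is genuinely different. The paper works directly in the eigenparametrization $\{\lambda_j,\psi_j\}$: it takes the Fr\'echet derivative of (\ref{eq:modelReg}) with respect to each $\psi_k$ on the unit sphere, observes that the stationarity condition $\langle\hat{C}\psi_k,h\rangle=0$ for all $h\perp\psi_k$ forces $\psi_k$ to be an eigenvector of $\hat{C}$, invokes an identifiability convention to pick the ordering, and then checks by a direct displacement calculation that the chosen eigenvector is a maximizer; the scalar optimization in $\lambda_k$ then gives $\hat\lambda_k=\langle\hat{C}\hat\psi_k,\hat\psi_k\rangle-\tfrac{1}{2\sigma}$, exactly your profiled value. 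Your approach instead passes to the operator variable $A=(I+2\sigma D)^{-1}$, notes concavity of $\tfrac{1}{2}\log\det A-\sigma\,\mathrm{tr}(A\hat{C})$, and uses a Fischer-type pinching inequality to force the maximizer to commute with $\hat{C}$ before reducing to scalar problems. The payoff of your method is that it is a global argument handling all eigenpairs simultaneously and sidestepping the somewhat informal sequential ``optimize $\psi_1$, then $\psi_2$ orthogonal to $\hat\psi_1$, \dots'' structure in the paper; the price is the heavier machinery (Fredholm determinants and the operator Hadamard/Fischer inequality), whose infinite-dimensional justification you rightly flag --- though, as you note, the finite rank of $\hat{C}$ makes this essentially a finite-dimensional issue. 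Your closing Schur--Horn alternative is morally right for large $\sigma$ but would need care with the two profiling regimes; the main convexity argument is the cleaner one.
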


\begin{proof}
	The logarithm of the kernel mean function of the product measure $\otimes_{i=1}^{n}\mathcal{N}\left(\mu,C\right)$
	is presented in (\ref{eq:modelReg}), while we set $\mu=0$. 
	Parameter estimation is obtained by taking Fr\'echet derivation of kernel mean function with respect
	to $\psi_{k}$ and usual derivation of kernel mean function with respect to $\lambda_k$. In each case, it is shown that
	the local extremum is the global maximum of kernel
	mean function.\\
	\newline
	1) $\psi_{k}$: First we obtain the estimation of $\psi_{1}$. Taking
	Fr\'echet derivation of kernel mean function with respect to 
	$\psi_{1}$, we have
	\begin{align*}
	\left[\frac{\partial}{\partial\psi_{1}}\log m_{\otimes\mathcal{N}}(y_{1},\ldots,y_{n})\right]\left(h\right) & =\left[\frac{\partial}{\partial\psi_{1}}\sum_{i=1}^{n}\sum_{j\geq1}\frac{-\sigma}{1+2\sigma\lambda_{j}}\left\langle y_{i},\psi_{j}\right\rangle ^{2}\right]\left(h\right)\\
	& =\frac{-2\sigma}{1+2\sigma\lambda_{1}}\sum_{i=1}^{n}\left\langle y_{i},\psi_{1}\right\rangle \left\langle y_{i},h\right\rangle \\
	& =\frac{-2\sigma n}{1+2\sigma\lambda_{1}}\left\langle \hat{C}\psi_{1},h\right\rangle. 
	\end{align*}
	Consider that $\psi_{1}$ lies in a sphere of radius 1, thus $\tilde{\psi}_{1}$
	is an extremum point of $\log m_{\otimes\mathcal{N}}(y_{1},\ldots,y_{n})$,
	if $\left\langle \hat{C}\psi_{1},h\right\rangle=0$ for any arbitrary
	$h$ in the tangent space of unit sphere at point $\tilde{\psi}_{1}$,
	i.e. 
	\begin{equation}
	\forall h\in\{\tilde{\psi}_{1}\}^{\perp}\quad\Rightarrow\qquad\left\langle \hat{C}\tilde{\psi}_{1},h\right\rangle =0.\label{eq:Gaus_Psi_est}
	\end{equation}
	In addition, for the case of identifiability $\tilde{\psi}_{1}$ must associates to the largest eigenvalue of $\hat{C}$
	%, i.e. $\tilde{\lambda}_{1}=\frac{1}{n}\sum_{i=1}^{n}\left\langle y_{i},\tilde{\psi}_{1}\right\rangle ^{2}$
	. This way, MKM estimation of $\psi_{1}$ is the solution to the following optimization problem:
	\begin{equation}
	\hat{\psi}_{1}=\underset{\tilde{\psi}\in\mathbb{H}}{\argmax}\frac{1}{n}\sum_{i=1}^{n}\left\langle y_{i},\tilde{\psi}\right\rangle ^{2}\qquad s.t.\quad\left\langle \hat{C}\tilde{\psi}_{k},h\right\rangle =0\quad\forall h\in\{\tilde{\psi}\}^{\perp},\label{eq:Gaus_Psi_est_prog}
	\end{equation}
	which immediately follows that MKM estimation of $\psi_{1}$ is the
	first eigenfunction of $\hat{C}$, and is independent of kernel parameter $\sigma$. The remaining question to answer is
	that if $\hat{\psi}_{1}$ maximizes (\ref{eq:modelReg}) or not.
	Consider that for any arbitrary $h\in\left\{ \hat{\psi}_{1}\right\} ^{\perp}$
	and $\tilde{\psi}_{1}=\frac{\hat{\psi}_{1}+h}{\left\Vert \hat{\psi}_{1}+h\right\Vert }$,
	%	\begin{align*}
	%	\log~&m_{\tilde{\psi}_{1}}(y_{1},\ldots,y_{n})\\
	%	 &=\log m_{\hat{\psi}_{1}}(y_{1},\ldots,y_{n})-\frac{n\sigma}{\left\Vert \hat{\psi}_{1}+h\right\Vert ^{2}\left(1+2\sigma\lambda_{1}\right)}\left\langle \hat{C}h,h\right\rangle -\frac{2n\sigma}{\left\Vert \hat{\psi}_{1}+h\right\Vert ^{2}\left(1+2\sigma\lambda_{1}\right)}\left\langle \hat{C}\hat{\psi}_{1},h\right\rangle \\
	%	& \leq\log m_{\hat{\psi}_{1}}(y_{1},\ldots,y_{n}).
	%	\end{align*}
	\begin{align*}
	\log m_{\tilde{\psi}_{1}}(y_{1},\ldots,y_{n}) &=\log m_{\hat{\psi}_{1}}(y_{1},\ldots,y_{n})-\frac{n\sigma}{\left\Vert \hat{\psi}_{1}+h\right\Vert ^{2}\left(1+2\sigma\lambda_{1}\right)}\left\langle \hat{C}h,h\right\rangle  \\
	& \leq\log m_{\hat{\psi}_{1}}(y_{1},\ldots,y_{n}).
	\end{align*}
	So $\hat{\psi}_{1}$ is the MKM estimation of $\psi_{1}$. For the
	MKM estimation of $\psi_{k}$, $k\geq2$, the following
	constraint should be added to the optimization problem (\ref{eq:Gaus_Psi_est_prog})
	\[
	\left\langle \hat{\psi}_{k},\hat{\psi}_{j}\right\rangle =0\qquad\forall1\leq j<k,
	\]
	which shows that the MKM estimation of all eigenfunctions is the same
	as the set of eigenfunctions of $\hat{C}$.  
	\newline\newline
	2) $\lambda_{k}$: Taking derivation of kernel mean function with
	respect to $\lambda_{k}$, yields
	\begin{align}
	\frac{\partial}{\partial\lambda_{k}}\log m_{\otimes\mathcal{N}}(y_{1},\ldots,y_{n}) & =\frac{\partial}{\partial\lambda_{k}}\hspace{-.2em}\left[\sum_{i=1}^{n}\sum_{j\geq1}\hspace{-0.2em}\frac{-\sigma}{1\hspace{-.2em}+\hspace{-.2em}2\sigma\lambda_{j}}\hspace{-0.2em}\left\langle y_{i},\psi_{j}\right\rangle ^{2}\hspace{-.2em}-\hspace{-.2em}\frac{n}{2}\sum_{j\geq1}\log\left(1\hspace{-.2em}+\hspace{-.2em}2\sigma\lambda_{j}\right)\right]\nonumber\\
	& =\sum_{i=1}^{n}\frac{2\sigma^{2}}{\left(1+2\sigma\lambda_{k}\right)^{2}}\left\langle y_{i},\psi_{k}\right\rangle ^{2}-\frac{n}{2}\frac{2\sigma}{1+2\sigma\lambda_{k}}\nonumber\\
	& =\frac{\sigma}{\left(1+2\sigma\lambda_{k}\right)^{2}}\left[2\sigma\sum_{i=1}^{n}\left\langle y_{i},\psi_{k}\right\rangle ^{2}-n\left(1+2\sigma\lambda_{k}\right)\right]\label{eq124}.
	\end{align}
	Equating (\ref{eq124}) to zero and given $\psi_k$, the value of $\lambda_k$ which maximizes (\ref{eq:modelReg}) while we set $\mu=0$ is given by  $\hat{\lambda}_{k}=\frac{1}{n}\sum_{i=1}^{n}\left\langle y_{i},\psi_{k}\right\rangle ^{2}-\frac{1}{2\sigma}$, in consequence by putting $\psi_k$ to be the $k$'th eigenfunction of $\hat{C}$, we obtain $\hat{\lambda}_{k}=\frac{1}{n}\sum_{i=1}^{n}\left\langle y_{i},\hat{\psi}_{k}\right\rangle ^{2}-\frac{1}{2\sigma}$,
	which is a biased estimator of $\lambda_{k}$ and converges to $\frac{1}{n}\sum_{i=1}^{n}\left\langle y_{i},\hat{\psi}_{k}\right\rangle ^{2}$
	as $\sigma$ tends to infinity.
\end{proof}
%\begin{rem}\color{red}
%	If we maximize $m_P\left(y_{1},\ldots,y_{n}\right)-\frac{1}{2}\left\Vert m_{P}\right\Vert ^{2}$ instead of $m_P\left(y_{1},\ldots,y_{n}\right)$,  same results  yield as those presented in Proposition \ref{prop:RMKM_loc} and \ref{prop:RMKM_cov}.
%\end{rem}
In the following proposition, we obtained an estimation of the functional regression coefficients of model (\ref{model:f-o-s1}) by employing small-ball (SB) probability approximation. 
\begin{prop}\label{rem::deligel_limitingEstimator}
	In the case of Function-on-Scalar regression with the assumption of normality as in the model (\ref{model:f-o-s1}), in estimating the location parameters, the MKM or OLS estimator is the same as the one obtained by the small-ball probability approximation proposed by \citet{delaigle2010defining}.
\end{prop}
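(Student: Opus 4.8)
The plan is to compute the estimator that the small-ball (SB) ``log-density'' of \citet{delaigle2010defining} produces for model (\ref{model:f-o-s1}), and to observe that its stationarity equations are nothing but the OLS normal equations; combined with Proposition \ref{prop:RMKM_loc} this yields the assertion that the MKM, OLS, and SB estimators of $\beta$ all coincide.

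First I would adapt the SB approximation to the regression model. Since $Y_i\sim\mathcal{N}(x_i^T\beta,C)$ we have $Y_i-x_i^T\beta\sim\mathcal{N}(0,C)$ and $P(\|Y_i-y_i\|<r)=P(\|(Y_i-x_i^T\beta)-(y_i-x_i^T\beta)\|<r)$, so the SB log-density of $Y_i$ at $y_i$ is the log-density of the zero-mean element $Y_i-x_i^T\beta$ evaluated at the residual $y_i-x_i^T\beta$. The $j$-th principal component score of $Y_i-x_i^T\beta$ is $\mathcal{N}(0,\lambda_j)$, the scores being independent because for Gaussian elements uncorrelated components are independent, so $f_j$ is the $\mathcal{N}(0,\lambda_j)$ density and
\[
\log f_j\bigl(\langle y_i-x_i^T\beta,\psi_j\rangle\bigr)=-\tfrac12\log(2\pi\lambda_j)-\frac{\langle y_i-x_i^T\beta,\psi_j\rangle^2}{2\lambda_j}.
\]
By independence of $y_1,\dots,y_n$, the joint SB log-density is $\sum_{i=1}^n\sum_{j=1}^h\log f_j(\langle y_i-x_i^T\beta,\psi_j\rangle)$; discarding the additive terms $-\tfrac12\log(2\pi\lambda_j)$, which do not involve $\beta$, maximising it over $\beta$ amounts to minimising the weighted sum of squared residual scores $\sum_{i=1}^n\sum_{j=1}^h\lambda_j^{-1}\langle y_i-x_i^T\beta,\psi_j\rangle^2$.

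Next I would carry out this optimisation exactly as in the proof of Proposition \ref{prop:RMKM_loc}. Taking the Fr\'echet derivative with respect to $\beta\in\mathbb{H}^p$ and using $\langle x_i^T\eta,\psi_j\rangle=\sum_{k=1}^p x_{ik}\langle\eta_k,\psi_j\rangle$ for a direction $\eta\in\mathbb{H}^p$, the stationarity condition reads
\[
\sum_{j=1}^h\lambda_j^{-1}\Bigl\langle\sum_{i=1}^n x_{ik}\bigl(y_i-x_i^T\beta\bigr),\psi_j\Bigr\rangle\,\langle\eta_k,\psi_j\rangle=0\qquad\text{for every }k\text{ and every }\eta.
\]
Because $\{\psi_j\}$ is orthonormal and $\lambda_j>0$, this forces $\bigl\langle\sum_{i=1}^n x_{ik}(y_i-x_i^T\beta),\psi_j\bigr\rangle=0$ for each $j\le h$ and each $k$ separately, so that the positive weights $\lambda_j^{-1}$ play no role. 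Since the SB approximation is meaningful only as $r\to0$, equivalently as the number of retained components $h\to\infty$, and $\{\psi_j\}_{j\ge1}$ is then a complete orthonormal system of $\mathbb{H}$ (assuming, as is standard, that $C$ is injective; otherwise one argues on $\overline{\mathrm{range}(C)}$, which contains every residual almost surely), we obtain $\sum_{i=1}^n x_{ik}(y_i-x_i^T\beta)=0$ for every $k$, i.e.\ $X^T(Y-X\beta)=0$, whence the SB estimator is $\widehat\beta=(X^TX)^{-1}X^TY$; the same second-order argument used in Proposition \ref{prop:RMKM_loc} (write $\beta=\widehat\beta+\nu$ and note the objective increases by $\sum_{i}\sum_{j\le h}\lambda_j^{-1}\langle x_i^T\nu,\psi_j\rangle^2\ge0$) confirms it is the maximiser. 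This is precisely the OLS estimator, which by Proposition \ref{prop:RMKM_loc} is the MKM estimator, so the three coincide.

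The step that requires care is the limiting nature of the SB estimator: at a fixed truncation $h$ the log-density depends on $\beta$ only through its components along $\psi_1,\dots,\psi_h$, so the maximiser is non-unique there, and the estimator is well defined only in the limit $h\to\infty$, where completeness of $\{\psi_j\}_{j\ge1}$ pins down all components. Everything else is routine; the single structural fact behind the result is that orthonormality of the eigenfunctions decouples the weighted normal equations coordinate by coordinate, so that the $\lambda_j^{-1}$ weighting --- the only feature that distinguishes the SB objective from the OLS objective --- is annihilated.
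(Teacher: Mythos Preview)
Your argument is correct and matches the paper's: both compute the SB log-density, note that the stationarity equations decouple along the eigenbasis so the $\lambda_j$-dependent weights drop out, and recover the OLS normal equations as $h\to\infty$. The only cosmetic differences are that the paper parametrises $\beta$ via its Fourier coefficients $\theta_{kj}=\langle\beta_k,\psi_j\rangle$ and builds the explicit truncated estimator $\hat\beta^{r}=(X^TX)^{-1}X^T\bigl(\sum_{j\le h}\langle Y_i,\psi_j\rangle\psi_j\bigr)$ before passing to the limit (thereby sidestepping your non-uniqueness remark), and that its normalisation of the scores yields weights $\lambda_j$ rather than your $\lambda_j^{-1}$ --- irrelevant, as you observe, to the optimiser.
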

\begin{proof}
	Let $Y_{1},\ldots,Y_{n}$ be a simple random
	sample generated by model (\ref{model:f-o-s1}), where $\varepsilon_i$ is a sequence of $n$ independent copies of a $\mathbb{H}$-valued Gaussian random element with mean function zero, and covariance operator $C=\sum_{j\geq1} \lambda_j\psi_j\otimes\psi_j$.
	Let functions $\beta_{k}$ admits the Fourier decomposition
	$\beta_{k}=\sum_{j\geq1}\theta_{kj}\psi_{j}$ and define $\theta_{j}=\left(\theta_{kj}\right)_{k=1,\ldots,p}$, 
	$x_{i}=\left(x_{ik}\right)_{k=1,\ldots,p}$ and $\beta=\left(\beta_{k}\right)_{k=1,\ldots,p}$.
	
	The identity
	$\epsilon_{i}=Y_{i}-x_{i}^{T}\beta\overset{\text{iid}}{\sim}\mathcal{N}(0,C)$
	is equivalent to the situation where component scores $\langle\epsilon,\psi_{j}\rangle/\sqrt{\lambda_{j}}$
	are independent and identically distributed according to the standard
	normal distribution for each $j\in\mathbb{N}$. Fix $r>0$ and let $h=\underset{j}{\text{argmax}}~r^{2}\leq\lambda_{j}$. By method of
	\citet{delaigle2010defining}, the log-density
	%, that is the approximated log-likelihood of small-ball probabilities 
	with radius	$r$ equals to 
	\begin{equation}\label{eq:SB-objfun}
	\ln P(\epsilon_{i}|r)\propto\sum_{j=1}^{h}\ln f_{j}(\sqrt{\lambda_{j}}\langle\epsilon_{i},\psi_{j}\rangle)\propto-\sum_{j=1}^{h}\frac{\lambda_{j}}{2}\left(\langle Y_{i},\psi_{j}\rangle-x_{i}^{T}\theta_{j}\right)^{2},
	\end{equation}
	and thus 
	\[
	\sum_{i=1}^{n}\ln P(\epsilon_{i}|r)\propto\hspace{-.2em}-\hspace{-.2em}\sum_{i=1}^{n}\sum_{j=1}^{h}\frac{\lambda_{j}}{2}\left(\langle Y_{i},\psi_{j}\rangle-x_{i}^{T}\theta_{j}\right)^{2}\hspace{-.2em}=\hspace{-.2em}\sum_{j=1}^{h}\lambda_{j}\Big(-\frac{1}{2}\theta_{j}^{T}(X^{T}X)\theta_{j}+B_{j}^{T}\theta_{j}\Big)
	\]
	in which $B_{j}=\sum_{i=1}^{n}\langle Y_{i},\psi_{j}\rangle x_{i}$
	and $X$ is $n\times p$ model matrix and $Y$ is an $n\times1$
	column vector containing functions $Y_{i}(\cdot)$. Estimate of $\theta_{j}$
	can be obtained by solving the equation $\frac{\partial}{\partial\theta_{j}}\sum_{i=1}^{n}\ln P(\epsilon_{i}|r)=0$
	thus 
	\[
	\hat{\theta}_{j}=\left(X^{T}X\right)^{-1}B_{j}.
	\]
	For a given $r>0$ or its coupled quantity $h\in\mathbb{N}$, estimation
	of $\beta$ is 
	\begin{align*}
	\hat{\beta}^{r}=\left(\sum_{j=1}^{h}\hat{\theta}_{kj}\psi_{j}\right)_{k=1,\ldots,p}&=\left(X^{T}X\right)^{-1}\left(\sum_{i=1}^{n}x_{ik}\sum_{j=1}^{h}\langle Y_{i},\psi_{j}\rangle\psi_{j}\right)_{k=1,\ldots,p}\\
	&=\left(X^{T}X\right)^{-1}X^{T}\left(\sum_{j=1}^{h}\langle Y_{i},\psi_{j}\rangle\psi_{j}\right).
	\end{align*}
	Considering the limit of $\hat{\beta}^{r}$ as $r$ tends to zero, the limiting
	estimation ends up with $\lim_{r\to0}\hat{\beta}^{r}=\big(X^{T}X\big)^{-1}X^{T}Y$,
	which is the OLS estimation of $\beta$.
\end{proof}
%Both MKM and SB estimators depend on other parameters, with the limiting estimators coincide, when the $\sigma$ and $h$ tends to infinity. %It can also be notice the difference in the components weights of the MKM objective function (\ref{eq:modelReg}) and SB objective function (\ref{eq:SB-objfun}).
\section{Applications}\label{sec::aplications}
In the context of machine learning, the Maximum Mean Discrepancy (MMD) 
is a useful vehicle for hypothesis testing. If the kernel $k$ is characteristic, then MMD is a metric on the space of probability measures. The induced distance by this metric can be employed to derive different statistical tests that can be hard to handle in the context of Functional data analysis, especially simultaneous hypothesis tests such as one-way ANOVA and testing for equality of covariance operators in more than two groups. To develop new tests, the probability measures induced by null and alternative hypotheses are embedded in a Hilbert space and their distance is computed by the MMD. 
%For example consider the Gaussian kernel as a characteristic kernel over the space of sequences with finitely many non-zero elements or the family of Elliptical probability measures over an arbitrary infinite-dimensional Hilbert space. Let $\mathcal{H}_\sigma$ be the RKHS induced by Gaussian kernel with parameter $\sigma$ and $\text{MMD}_\sigma$ be the corresponding MMD distance. \textcolor{red}{It is known that if $\sigma_2>\sigma_1$ then $\mathcal{H}_{\sigma_2}\supset\mathcal{H}_{\sigma_1}$}, and consequently $\text{MMD}_{\sigma_2}$ will be stronger than $\text{MMD}_{\sigma_1}$.

Kernel-based methods such as kernel mean and covariance embedding of probability measures have a wide range of applications in analyzing structured and non-structured data and also developing non-parametric tests in finite-dimensional spaces such as testing for homogeneity of location and variance parameters, change-point detection, and test of independence. See for example \citet{Harchaoui2009ChangePoint}, \citet{gretton2012kernel} and \citet{Tang2017Bernoulli} among others to get some insight. 
In this section, we employ MMD to develop new tests for three major problems in the context of Functional response models, including Function-on-Scalar regression, Functional one-way ANOVA, and testing for equality of covariance operators. The performance of new tests is compared to some state-of-the-art methods.

Before proceeding, it seems indispensable to notice 
that in the methods developed in this section, we assumed that the sampled random functions are observed completely. However, in practice, a function is observed only in a sparse or dense subset of the domain. Accordingly, at the first stage of analysis, we may operate a smoothing procedure to construct  functions. To scrutinize the effect of smoothing in the results, the first simulation in this section has been done with different number of points sampled per curve. The results seem to be acceptable despite using smoothed functions rather than complete functions.

\subsection{Hypothesis Testing in Function-on-Scalar Regression Model}
Let $\mathbb{H}$ be the space of square-integrable functions $L^2[0,1]$, and consider the following simple Function-on-Scalar regression problem: 
\begin{equation}
y_{i}\left(t\right)=\alpha\left(t\right)+x_{i}\beta\left(t\right)+\varepsilon_{i}\left(t\right),\quad\varepsilon_{i}\stackrel{\text{iid}}{\sim}\mathcal{N}\left(0,C\right);\quad i=1,\ldots,n\quad t\in\left[0,1\right]\label{eq:func-on-scalRegSimple}
\end{equation}
where $\mathcal{N}\left(0,C\right)$ is the Gaussian distribution over the space $L^2[0,1]$, with  mean function zero and covariance operator $C$.
In proposition \ref{prop:RMKM_loc} it is shown that the maximum
kernel mean estimation of intercept and slope functions coincide with
OLS estimation. To assess the uncertainty of estimation and testing
for $H_{0}:\beta=0$, we run a simulation study proposed by \citet{kokoszka2017discussion}
to compare type-I error and power of new test devised from MMD with the current developed
tests. 

Let $\alpha\left(t\right)=2t$ and $\beta\left(t\right)=-c_{0}\cos\left(\pi t\right)$,
in which the parameter $c_{0}$ is used to switch between the null 
and alternative hypotheses. For the covariance operator $C$ in (\ref{eq:func-on-scalRegSimple}) we use
the Mat\'ern family of covariance operators, once with an infinite smoothness
parameter and once with the smoothness parameter set to $\nicefrac{1}{2}$, 
\[
C_{\infty}\left(s,t\right)=\exp\left\{ -\frac{\left|s-t\right|^{2}}{\rho}\right\} \quad\text{and}\quad C_{\nicefrac{1}{2}}\left(s,t\right)=\exp\left\{ -\frac{\left|s-t\right|}{\rho}\right\} .
\]

The kernel function $C_{\infty}$ referred to as squared-exponential
covariance and $C_{\nicefrac{1}{2}}$ referred to as exponential
covariance function. To test $H_{0}:\beta=0$, we devise a new test
using MMD statistic by employing Gaussian kernel as a characteristic
kernel on $\mathbb{H}^{n}$. We use either the Gaussian sum-kernel
\[
k(\cdot,\cdot):\mathbb{H}^{n}\times\mathbb{H}^{n}\longrightarrow\mathbb{R}\quad\left(\left\{ x_{i}\right\} ,\left\{ y_{i}\right\} \right)\mapsto\sum_{i=1}^{n}e^{-\sigma\left\Vert x_{i}-y_{i}\right\Vert _{\mathbb{H}}^{2}}
\]
or Gaussian product-kernel 
\[
k(\cdot,\cdot):\mathbb{H}^{n}\times\mathbb{H}^{n}\longrightarrow\mathbb{R}\quad\left(\left\{ x_{i}\right\} ,\left\{ y_{i}\right\} \right)\mapsto\prod_{i=1}^{n}e^{-\sigma\left\Vert x_{i}-y_{i}\right\Vert _{\mathbb{H}}^{2}}.
\]
Let $P_{0}$ be the induced probability measure of model (\ref{eq:func-on-scalRegSimple})
under the null hypothesis, and $P_{1}$ be the induced probability measure
when the parameter $\beta$ considered to be free. Let $\hat{C}_{0}$
and $\hat{\alpha}_{0}$ be the estimation of covariance and intercept
function under the null hypothesis and $\hat{C}_{1}$, $\hat{\alpha}_{1}$
and $\hat{\beta}_{1}$ be the estimation of covariance, intercept
and slope function under the alternative hypothesis as described in Proposition \ref{prop:RMKM_loc} and Proposition \ref{prop:RMKM_cov}, then plugin estimators
$\hat{P}_{0}$ and $\hat{P}_{1}$ are $\prod_{i=1}^{n}\mathcal{N}\left(\hat{\alpha}_{0},\hat{C}_{0}\right)$
and $\prod_{i=1}^{n}\mathcal{N}\left(\hat{\alpha}_{1}+x_{i}\hat{\beta}_{1},\hat{C}_{1}\right)$
respectively. The MMD statistic can then be defined as the maximum mean discrepancy
distance between $\hat{P}_{0}$ and $\hat{P}_{1}$. Using the Gaussian
Sum-Kernel, MMD statistic equals to:
%\begin{align*}
%\text{MMDS} & =\left\Vert m_{\hat{P}_{0}}-m_{\hat{P}_{1}}\right\Vert _{\mathcal{H}_{k}}=\left[\left\Vert m_{\hat{P}_{0}}\right\Vert _{\mathcal{H}_{k}}^{2}+\left\Vert m_{\hat{P}_{1}}\right\Vert _{\mathcal{H}_{k}}^{2}-2\left\langle m_{\hat{P}_{0}},m_{\hat{P}_{1}}\right\rangle _{\mathcal{H}_{k}}\right]^{\nicefrac{1}{2}}\\
%& =\left[\vphantom{\sum_{i=1}^{n}e^{-\sigma\left\langle \left(I+2\sigma\left(\hat{C}_{0}+\hat{C}_{1}\right)\right)^{-1}\left(\hat{\alpha}_{0}-\hat{\alpha}_{1}-x_{i}\hat{\beta}_{1}\right),\left(\hat{\alpha}_{0}-\hat{\alpha}_{1}-x_{i}\hat{\beta}_{1}\right)\right\rangle }}n\left|I+4\sigma\hat{C}_{0}\right|^{-\nicefrac{1}{2}}+n\left|I+4\sigma\hat{C}_{1}\right|^{-\nicefrac{1}{2}}\right.\\
%& \left.\qquad-2\left|I+2\sigma\left(\hat{C}_{0}+\hat{C}_{1}\right)\right|^{-\nicefrac{1}{2}}\sum_{i=1}^{n}e^{-\sigma\left\langle \left(I+2\sigma\left(\hat{C}_{0}+\hat{C}_{1}\right)\right)^{-1}\left(\hat{\alpha}_{0}-\hat{\alpha}_{1}-x_{i}\hat{\beta}_{1}\right),\left(\hat{\alpha}_{0}-\hat{\alpha}_{1}-x_{i}\hat{\beta}_{1}\right)\right\rangle }\right]^{\nicefrac{1}{2}},
%\end{align*}
\begin{align*}
\text{MMDS} & =\left\Vert m_{\hat{P}_{0}}-m_{\hat{P}_{1}}\right\Vert _{\mathcal{H}_{k}}=\left[\left\Vert m_{\hat{P}_{0}}\right\Vert _{\mathcal{H}_{k}}^{2}+\left\Vert m_{\hat{P}_{1}}\right\Vert _{\mathcal{H}_{k}}^{2}-2\left\langle m_{\hat{P}_{0}},m_{\hat{P}_{1}}\right\rangle _{\mathcal{H}_{k}}\right]^{\nicefrac{1}{2}}\\
& =\left[\vphantom{\sum_{i=1}^{n}e^{-\sigma\left\langle \left(I+2\sigma\left(\hat{C}_{0}+\hat{C}_{1}\right)\right)^{-1}\left(\hat{\alpha}_{0}-\hat{\alpha}_{1}-x_{i}\hat{\beta}_{1}\right),\left(\hat{\alpha}_{0}-\hat{\alpha}_{1}-x_{i}\hat{\beta}_{1}\right)\right\rangle }}n\left|I+4\sigma\hat{C}_{0}\right|^{-\nicefrac{1}{2}}+n\left|I+4\sigma\hat{C}_{1}\right|^{-\nicefrac{1}{2}}-2\left|I+2\sigma\left(\hat{C}_{0}+\hat{C}_{1}\right)\right|^{-\nicefrac{1}{2}}\right.\\
& \left.\qquad\qquad~\times\sum_{i=1}^{n}e^{-\sigma\left\langle \left(I+2\sigma\left(\hat{C}_{0}+\hat{C}_{1}\right)\right)^{-1}\left(\hat{\alpha}_{0}-\hat{\alpha}_{1}-x_{i}\hat{\beta}_{1}\right),\left(\hat{\alpha}_{0}-\hat{\alpha}_{1}-x_{i}\hat{\beta}_{1}\right)\right\rangle }\right]^{\nicefrac{1}{2}},
\end{align*}
and the Gaussian product-kernel yields:
\begin{align*}
\text{MMDP} & =\left\Vert m_{\hat{P}_{0}}-m_{\hat{P}_{1}}\right\Vert _{\mathcal{H}_{k}}=\left[\left\Vert m_{\hat{P}_{0}}\right\Vert _{\mathcal{H}_{k}}^{2}+\left\Vert m_{\hat{P}_{1}}\right\Vert _{\mathcal{H}_{k}}^{2}-2\left\langle m_{\hat{P}_{0}},m_{\hat{P}_{1}}\right\rangle _{\mathcal{H}_{k}}\right]^{\nicefrac{1}{2}}\\
& =\left[\vphantom{e^{-\sigma\sum\limits _{i=1}^{n}\left\langle \left(I+2\sigma\left(\hat{C}_{0}+\hat{C}_{1}\right)\right)^{-1}\left(\hat{\alpha}_{0}-\hat{\alpha}_{1}-x_{i}\hat{\beta}_{1}\right),\left(\hat{\alpha}_{0}-\hat{\alpha}_{1}-x_{i}\hat{\beta}_{1}\right)\right\rangle }}\left|I+4\sigma\hat{C}_{0}\right|^{-\nicefrac{n}{2}}+\left|I+4\sigma\hat{C}_{1}\right|^{-\nicefrac{n}{2}}-2\left|I+2\sigma\left(\hat{C}_{0}+\hat{C}_{1}\right)\right|^{-\nicefrac{n}{2}}\right.\\
& \left.\qquad\qquad\qquad\times e^{-\sigma\sum\limits _{i=1}^{n}\left\langle \left(I+2\sigma\left(\hat{C}_{0}+\hat{C}_{1}\right)\right)^{-1}\left(\hat{\alpha}_{0}-\hat{\alpha}_{1}-x_{i}\hat{\beta}_{1}\right),\left(\hat{\alpha}_{0}-\hat{\alpha}_{1}-x_{i}\hat{\beta}_{1}\right)\right\rangle }\right]^{\nicefrac{1}{2}}.
\end{align*}
Significance level is put at $0.05$ and type-I error and
power of test are compared with the test proposed by \citet{greven2017general} and implemented in the \code{pffr} function of \code{refund}
package. The rate of rejection
for different number of points sampled per curve ($m$) and
two different covariance operators, computed using a Monte Carlo simulation
study, and the results are presented in Tables
\ref{tbl:func_on_scal_sqExp} and  \ref{tbl:func_on_scal_Exp}. 
Note that the distribution of MMD statistic under null hypothesis is approximated using the random permutation method.
It could be realized that type-I error of \code{pffr} is inflated and is higher than the significance level. This problem is much worse with increasing  number of sampling points per curve. \citet{kokoszka2017discussion}
proposed a partial fix to this problem. They suggest to ignore the
uncertainty estimates from \code{pffr}. Instead, they use the estimated residual functions given by \code{pffr} and combine them with a classic estimate of uncertainty. To this end, let $\hat{\beta}_{1}\left(t\right)$
and $\hat{\varepsilon}_{i}\left(t\right)$ be the slope function and
residual functions estimated by \code{pffr} respectively. Then an estimation
of the uncertainty for $\hat{\beta}_{1}\left(t\right)$ is 
\begin{align}
\text{Cov}\left(\hat{\beta}_{1}\left(s\right),\hat{\beta}_{1}\left(t\right)\right) & =\text{Cov}\left(\varepsilon\left(s\right),\varepsilon\left(t\right)\right)\left[X^{T}X\right]_{\left(2,2\right)}^{-1}\nonumber \\
& \approxeq\left(n\sum_{i=1}^{n}\left(x_{i}-\bar{x}\right)^{2}\right)^{-1}\sum_{i=1}^{n}\hat{\varepsilon}_{i}\left(s\right)\hat{\varepsilon}_{i}\left(t\right),\label{eq:covbetaSlope}
\end{align}
in which $X$ is an $n\times2$ data matrix with vector of ones in the
first column and vector of scalar covariate $\left(x_{i}\right)$ in
the second column. We can run a variety of hypothesis tests by plugging $\hat{\beta}_{1}\left(t\right)$ obtained
from \code{pffr} and estimation of uncertainty obtained by
(\ref{eq:covbetaSlope}) into the \code{fregion.test} function in \code{fregion} package.

\begin{table}[tbh]
	\centering{}\caption{Type-I errors and empirical powers for $H_{0}:\beta=0$ with square-exponential
		covariance function.}
	{\footnotesize{
			\begin{tabular}{c|c|r@{\extracolsep{0pt}.}lr@{\extracolsep{0pt}.}lr@{\extracolsep{0pt}.}lr@{\extracolsep{0pt}.}l|r@{\extracolsep{0pt}.}lr@{\extracolsep{0pt}.}lr@{\extracolsep{0pt}.}lr@{\extracolsep{0pt}.}l}
				\hline 
				& n & \multicolumn{8}{c|}{30} & \multicolumn{8}{c}{70}\tabularnewline
				\hline 
				\hline 
				& $c_{0}$ & \multicolumn{2}{c}{\textbf{0}} & 0&2 & 0&4 & 0&6 & \multicolumn{2}{c}{\textbf{0}} & 0&1 & 0&2 & 0&3\tabularnewline
				\hline 
				\multirow{5}{*}{$m=10$} & pffr & \textbf{50}&\textbf{3} & 71&4 & 95&7 & 99&8 & \textbf{51}&\textbf{3} & 63&2 & 89&8 & 99&0\tabularnewline
				& Norm & \textbf{5}&\textbf{2} & 10&7 & 44&3 & 88&2 & \textbf{4}&\textbf{7} & 8&4 & 24&8 & 61&6\tabularnewline
				& Ellipse & \textbf{3}&\textbf{7} & 20&5 & 76&2 & 97&5 & \textbf{2}&\textbf{1} & 11&0 & 51&1 & 86&4\tabularnewline
				& MMDP & \textbf{4}&\textbf{8} & 23&3 & 77&2 & 97&5 & \textbf{4}&\textbf{4} & 16&1 & 59&0 & 89&9\tabularnewline
				& MMDS & \textbf{3}&\textbf{6} & 27&4 & 79&7 & 97&6 & \textbf{4}&\textbf{8} & 20&6 & 61&9 & 90&8\tabularnewline
				\hline 
				\multirow{5}{*}{$m=20$} & pffr & \textbf{69}&\textbf{6} & 86&6 & 98&5 & 99&9 & \textbf{66}&\textbf{7} & 80&6 & 96&8 & 99&7\tabularnewline
				& Norm & \textbf{5}&\textbf{1} & 10&6 & 45&8 & 87&6 & \textbf{2}&\textbf{7} & 8&5 & 23&4 & 60&4\tabularnewline
				& Ellipse & \textbf{4}&\textbf{6} & 25&6 & 73&7 & 98&6 & \textbf{2}&\textbf{5} & 14&3 & 49&1 & 87&0\tabularnewline
				& MMDP & \textbf{5}&\textbf{1} & 23&8 & 74&1 & 98&7 & \textbf{4}&\textbf{5} & 17&7 & 52&8 & 88&2\tabularnewline
				& MMDS & \textbf{5}&\textbf{5} & 26&9 & 75&4 & 98&7 & \textbf{4}&\textbf{8} & 22&2 & 62&5 & 92&0\tabularnewline
				\hline 
				\multirow{5}{*}{$m=50$} & pffr & \textbf{87}&\textbf{0} & 96&6 & \multicolumn{2}{c}{100} & \multicolumn{2}{c|}{100} & \textbf{85}&\textbf{0} & 93&2 & 99&1 & \multicolumn{2}{c}{100}\tabularnewline
				& Norm & \textbf{5}&\textbf{2} & 14&2 & 47&4 & 86&4 & \textbf{7}&\textbf{2} & 8&6 & 26&0 & 62&3\tabularnewline
				& Ellipse & \textbf{5}&\textbf{1} & 31&3 & 81&5 & 98&1 & \textbf{3}&\textbf{4} & 14&5 & 58&2 & 90&6\tabularnewline
				& MMDP & \textbf{4}&\textbf{0} & 25&4 & 76&5 & 97&3 & \textbf{4}&\textbf{5} & 15&9 & 56&5 & 90&5\tabularnewline
				& MMDS & \textbf{4}&\textbf{0} & 28&8 & 79&6 & 98&0 & \textbf{4}&\textbf{8} & 20&4 & 65&2 & 93&3\tabularnewline
				\hline 
			\end{tabular}	
	}}\label{tbl:func_on_scal_sqExp}
\end{table}
\begin{table}[bh!]
	\centering{}
	\caption{Type-I errors and empirical powers for $H_{0}:\beta=0$ with exponential
		covariance function.}
	{\footnotesize{
			\begin{tabular}{c|c|r@{\extracolsep{0pt}.}lr@{\extracolsep{0pt}.}lr@{\extracolsep{0pt}.}lr@{\extracolsep{0pt}.}l|r@{\extracolsep{0pt}.}lr@{\extracolsep{0pt}.}lr@{\extracolsep{0pt}.}lr@{\extracolsep{0pt}.}l}
				\hline 
				& n & \multicolumn{8}{c|}{30} & \multicolumn{8}{c}{70}\tabularnewline
				\hline 
				\hline 
				& $c_{0}$ & \multicolumn{2}{c}{\textbf{0}} & 0&2 & 0&4 & 0&6 & \multicolumn{2}{c}{\textbf{0}} & 0&1 & 0&2 & 0&3\tabularnewline
				\hline 
				\multirow{5}{*}{$m=10$} & pffr & \textbf{43}&\textbf{9} & 68&2 & 95&9 & \multicolumn{2}{c|}{100} & \textbf{43}&\textbf{0} & 57&8 & 85&9 & 98&3\tabularnewline
				& Norm & \textbf{3}&\textbf{0} & 11&9 & 55&5 & 91&1 & \textbf{2}&\textbf{7} & 8&5 & 29&4 & 70&9\tabularnewline
				& Ellipse & \textbf{0}&\textbf{7} & 10&8 & 56&4 & 92&0 & \textbf{0}&\textbf{1} & 1&5 & 18&7 & 60&0\tabularnewline
				& MMDP & \textbf{5}&\textbf{1} & 15&8 & 58&1 & 91&1 & \textbf{4}&\textbf{7} & 12&4 & 41&65 & 78&2\tabularnewline
				& MMDS & \textbf{5}&\textbf{0} & 8&5 & 23&9 & 52&2 & \textbf{4}&\textbf{6} & 5&5 & 9&3 & 15&3\tabularnewline
				\hline 
				\multirow{5}{*}{$m=20$} & pffr & \textbf{67}&\textbf{6} & 86&3 & 98&7 & \multicolumn{2}{c|}{100} & \textbf{63}&\textbf{6} & 79&5 & 96&1 & 99&8\tabularnewline
				& Norm & \textbf{3}&\textbf{3} & 11&9 & 54&2 & 92&2 & \textbf{3}&\textbf{8} & 7&4 & 29&5 & 71&7\tabularnewline
				& Ellipse & \textbf{1}&\textbf{1} & 10&7 & 54&3 & 93&0 & \textbf{0}&\textbf{1} & 0&1 & 6&4 & 40&9\tabularnewline
				& MMDP & \textbf{4}&\textbf{6} & 18&2 & 61&3 & 93&2 & \textbf{5}&\textbf{3} & 12&5 & 43&3 & 82&5\tabularnewline
				& MMDS & \textbf{4}&\textbf{6} & 20&8 & 61&1 & 91&9 & \textbf{5}&\textbf{4} & 17&4 & 54&7 & 87&9\tabularnewline
				\hline 
				\multirow{5}{*}{$m=50$} & pffr & \textbf{86}&\textbf{1} & 95&3 & 99&8 & \multicolumn{2}{c|}{100} & \textbf{87}&\textbf{0} & 93&8 & 99&3 & \multicolumn{2}{c}{100}\tabularnewline
				& Norm & \textbf{4}&\textbf{6} & 12&0 & 54&6 & 91&4 & \textbf{4}&\textbf{6} & 8&2 & 32&0 & 73&5\tabularnewline
				& Ellipse & \textbf{0}&\textbf{5} & 6&8 & 41&5 & 83&9 & \textbf{0}&\textbf{1} & 0&1 & 3&1 & 23&3\tabularnewline
				& MMDP & \textbf{4}&\textbf{4} & 18&7 & 61&6 & 92&6 & \textbf{4}&\textbf{9} & 11&7 & 44&9 & 82&3\tabularnewline
				& MMDS & \textbf{3}&\textbf{8} & 21&2 & 66&2 & 93&5 & \textbf{5}&\textbf{1} & 19&9 & 61&5 & 91&6\tabularnewline
				\hline 
			\end{tabular}
	}}
	\par
	\label{tbl:func_on_scal_Exp}
\end{table}

Here we compare the proposed test with two existing ones; one is based
on the $L^{2}\left[0,1\right]$ norm and the other test based on hyper-ellipsoid confidence regions
proposed by \citet{Choi2018}. The simulation results reported in Tables
\ref{tbl:func_on_scal_sqExp} and  \ref{tbl:func_on_scal_Exp}. It can be noticed 
that MMD tests have type-I errors close to the significance level,
and their powers are superior to the norm and hyper-ellipse tests. In all situations when the number of sampling points per curve is moderate and high ($m=20$ or $m=50$), MMDS outperforms MMDP, though, in the case of exponential covariance function where residual functions are less smooth, MMDP has  higher power than MMDS when the number of sampling points is small ($m=10$). 

Performance of the kernel methods depends on the choice of kernel's parameters. As it is shown in the proof of Theorem \ref{thm:thm1}, the precision of the kernel mean function in representing probability measures depends on the kernel bandwidth. A large enough kernel parameter $\sigma$ works well in our simulation studies.
In this simulation study, the kernel parameter has been set to $\sigma=5e4$ for MMDS and $\sigma=5e1$ for MMDP test. Fourier basis also has been used in the smoothing procedure. The number of components for the smoothing procedure is considered to be fixed and equals 41. The results presented in Tables
\ref{tbl:func_on_scal_sqExp} and  \ref{tbl:func_on_scal_Exp} are reported by 5000 iterations. 

\subsection{Functional One-way ANOVA}

The one-way ANOVA is a fundamental problem in statistical inference. 
%Consider the probability measures induced by this model with functional response. 
Assume that $\mathbb{H}$ is a separable Hilbert space, $Y_{ij}$ for $i=1,\ldots,k$ and $j=1,\ldots,n_{i}$ are independent
random samples taking values from  $\mathbb{H}$, and  $y_{ij}$ are their observations counterparts.
For a typical functional one-way ANOVA problem with the assumption
of homogeneity of covariance operators, the random elements  $Y_{ij}$
are assumed to be generated according to the following model: 
\begin{equation}
Y_{ij}=\mu_{i}+\varepsilon_{ij},\quad\varepsilon_{ij}\stackrel{iid}{\sim}\mathcal{N}\left(0,C\right);\quad i=1,\ldots,k,\;j=1,\ldots,n_{i}\label{eq:onewayANOVAModel}
\end{equation}
where $\mu_{i}$ is the mean function of the $i$'th group and the covariance
operator is equal in all the $k$ groups:  $C=E\left[\varepsilon_{ij}\otimes\varepsilon_{ij}\right]$ for all $i=1,\ldots,k$ and $j=1,\ldots,n_{i}$.
It is of interest to test the equality of $k$ mean functions, i.e.
$H_{0}:\mu_{1}=\cdots=\mu_{k}$. Let $P_{1}$ be the probability
measure of samples generated by model (\ref{eq:onewayANOVAModel})
under $H_{0}$ and $P_{2}$ be
the  probability measure induced by model (\ref{eq:onewayANOVAModel}),
under the alternative hypothesis, that is, when parameters $\mu_{i}$ are considered
to be free. With the assumption of homogeneity of covariance operators,
the squared MMD with Gaussian product-kernel equals to:
\begin{align*}
\text{MMD}^{2}&=\left\Vert m_{P_{1}}-m_{P_{2}}\right\Vert _{\mathcal{H}_{k}}^{2}\\
&=\left|I+4\sigma C\right|^{-\nicefrac{n}{2}}+\left|I+4\sigma C\right|^{-\nicefrac{n}{2}}\\
&~\hphantom{=\left|I+4\sigma C\right|^{-\nicefrac{n}{2}}}-2\left|I+4\sigma C\right|^{\nicefrac{-n}{2}}e^{-\sigma\sum_{i=1}^{k}n_{i}\left\langle \left(I+4\sigma C\right)^{-1}\left(\mu_{i}-\mu\right),\left(\mu_{i}-\mu\right)\right\rangle }\\
&=2\left|I+4\sigma C\right|^{-\nicefrac{n}{2}}\left(1-e^{-\sigma\sum_{i=1}^{k}n_{i}\left\langle \left(I+4\sigma C\right)^{-1}\left(\mu_{i}-\mu\right),\left(\mu_{i}-\mu\right)\right\rangle }\right).
\end{align*}
Covariance operator is assumed to be equal between the groups, so the
new MMD test statistic can be simplified as 
\begin{align}
\text{MMD}_{0}&=\sum_{i=1}^{k}n_{i}\left\langle \left(I+4\sigma C\right)^{-1}\left(\mu_{i}-\mu\right),\left(\mu_{i}-\mu\right)\right\rangle\nonumber\\ &=\sum_{i=1}^{k}n_{i}\left\Vert \left(I+4\sigma C\right)^{-\nicefrac{1}{2}}\left(\mu_{i}-\mu\right)\right\Vert _{\mathbb{H}}^{2}.\label{eq:MMD0}
\end{align}
Accordingly, the new test statistic is the weighted sum of the distance of group mean functions
$\mu_{i}$ from total mean function $\mu$. By plugging the usual estimation
of mean functions (which are also MKM estimations of mean functions)
into (\ref{eq:MMD0}), the new test statistic yields  
\begin{align*}
\hat{\text{MMD}}_{0}&=\sum_{i=1}^{k}n_{i}\left\langle \left(I+4\sigma\hat{C}\right)^{-1}\left(\hat{\mu}_{i}-\hat{\mu}\right),\left(\hat{\mu}_{i}-\hat{\mu}\right)\right\rangle\nonumber\\
&=\sum_{i=1}^{k}n_{i}\left\Vert \left(I+4\sigma\hat{C}\right)^{-\nicefrac{1}{2}}\left(\hat{\mu}_{i}-\hat{\mu}\right)\right\Vert _{\mathbb{H}}^{2}.
\end{align*}
This test statistic is similar to the kernel Fisher discriminant analysis (KFDA) test statistic proposed by \citet{harchaoui2013kernel} for a two-sample kernel-based non-parametric test when the underlying space is finite-dimensional.

Let $\mathbb{H}$ again be the space of square-integrable functions $L^{2}[0,1]$. Motivated by \citet{zhang2019new}, a simulation study was run to
evaluate the performance of the new MMD test against four other competitors developed for the space of square-integrable functions: a
$L^{2}$-norm based test proposed in \citet{zhang2007statistical}, an $F$-type
test proposed by \citet{shen2004f}, a Global Point-wise $F$-test offered in \citet{zhang2014one}
and the $F_{\text{max}}$ test developed and proposed by \citet{zhang2019new}. We used the same setup as \citet{zhang2019new} for data generating procedure in our simulation study. Hence it is assumed that the functional samples in (\ref{eq:onewayANOVAModel})
are generated from the following one-way ANOVA model: 
\begin{equation}
y_{ij}\left(t\right)=\mu_{i}\left(t\right)+\varepsilon_{ij}\left(t\right),\quad\mu_{i}\left(t\right)=\boldsymbol{c}_{i}^{T}\left[1,t,t^{2},t^{3}\right],\quad\varepsilon_{ij}\left(t\right)=\sum_{r\geq1}\sqrt{\lambda_{r}}z_{ijr}\psi_{r}\left(t\right).\label{eq:onwayANOVAgeneration}
\end{equation}
\[
i=1,\ldots,k;\quad j=1,\ldots,n_{i};\quad t\in\left[0,1\right].
\]
While our method works without the need to put any restriction on the
number of components, we follow the same setup as in \citet{zhang2019new}
and assume a finite number of $q$ nonzero eigenvalues in (\ref{eq:onwayANOVAgeneration}).

The parameter $n_{i}$ denotes the size of each group and the set of $\psi_{r}$ is the eigenfunctions. For all $i$,$j$, the design time points are considered
to be balanced and equally spaced, thus all sampled curves are measured in  the common grid of time points $t_{j}=j/\left(T+1\right)$,
$j=1,\ldots,T$. 

The eigenvalues are assumed to follow the pattern
$\lambda_{r}=a\rho^{r-1}$ for fixed $a>0$ and $\rho\in\left(0,1\right)$.
The tuning parameter $\rho$ determines the decay rate of eigenvalues.
For $\rho$ close to zero (resp. close to one), eigenvalues decay fast
(resp. slowly) and residual functions are more (resp. less) smooth. We put
$\boldsymbol{c}_{1}=\left[1,2.3,3.4,1.5\right]^{T}$ and $\boldsymbol{u}=\left[1,2,3,4\right]^{T}/\sqrt{30}$.
The vector $\boldsymbol{c}_{i}=\boldsymbol{c}_{1}+\left(i-1\right)\delta\boldsymbol{u}$
for different values of $\delta$ represents the mean functions of the $k$ groups. The parameter $\delta$ switches between null and alternative hypotheses.

We fix $q=11$, $a=1.5$, $T=80$ where $T$ is the number of time
points where each curve is observed. For the eigenfunctions, we
put $\psi_{1}\left(t\right)=1$, $\psi_{2r}\left(t\right)=\sqrt{2}\sin\left(2\pi rt\right)$
and $\psi_{2r+1}\left(t\right)=\sqrt{2}\cos\left(2\pi rt\right)$
for $r=1,\ldots,q$. Different setups for data generating procedure
is a combination of the following set of parameters:
\begin{itemize}
	\item $z_{ijr}\stackrel{iid}{\sim}N\left(0,1\right)$ or $z_{ijr}\stackrel{iid}{\sim}t_{4}/\sqrt{2}$.
	\item $\rho=0.1, 0.5, 0.9$ for different level of smoothness of residual
	functions.
	\item $\left(n_{i}\right)=\left(20, 30, 30\right)$ for the small sample
	%,	$\left(n_{i}\right)=\left(30, 40, 70\right)$ for the medium sample
	and $\left(n_{i}\right)=\left(70, 80, 100\right)$ for the large sample
	cases.
\end{itemize}
The parameter $\delta$ for each pair of $\rho$ and $n_{i}$ is selected
in a way that the difference between the performance
of five tests can be distinguished. For all the four test statistics, $L^{2}$-norm based,
$F$-type, GPF, and $F_{\max}$, the authors proposed bootstrap methods
to estimate the null distribution of the test statistic. Consult
\citet[Section 4.5.5]{zhang2013analysis} for the implementation
of the $L^{2}$-norm based and $F$-type test,  \citet[Section 2.4]{zhang2014one}
for the implementation of the GPF test, and \citet[Section S.1]{zhang2019new}
for the implementation of the $F_{\text{max}}$ test. 

Mention should be made that 
%It worth noting that 
although in this paper the
kernel mean embedding of probability measures and the MMD
statistic are derived for the family of Gaussian probability distributions, the new MMD test dominates all the four other
tests in all the situations even in non-Gaussian scenarios. 
Simulation results are shown in Tables \ref{tblCovNormal} and
\ref{tblCovT}. 

In our simulation study, we take $\sigma=1e3$ and B-spline basis has been used in smoothing procedure.
%, to have a different basis with the eigenfunctions of covariance operator.
The number of components for the smoothing procedure is considered to be fixed and equals 41.

According to the results, the empirical power of the MMD test
is higher than the other two tests in all situations. The results presented in these tables are produced and reported by 2000 iterations. 

\begin{table}[tbh]
	\begin{centering}
		\label{tbl:ANOVAnormal}\caption{Empirical powers (in percent) of $L^{2}$, $F$, GPF, $F_{\text{max}}$
			and $\text{MMD}_{0}$ for one-way ANOVA problem when $z_{ijr}\stackrel{iid}{\sim}N\left(0,1\right)$.}
		{\footnotesize{
				\begin{tabular}{cc|ccccc|ccccc}
					\hline 
					$\rho$ &  & \multicolumn{5}{c|}{$\left(n_{i}\right)=\left(20,30,30\right)$} & \multicolumn{5}{c}{$\left(n_{i}\right)=\left(70,80,100\right)$}\tabularnewline
					\hline 
					\multirow{6}{*}{0.1} & $\delta$ & \textbf{0} & 0.015 & 0.03 & 0.05 & 0.065 & \textbf{0} & 0.01 & 0.02 & 0.03 & 0.04\tabularnewline
					& $L^{2}$ & \textbf{7.2} & 8.1 & 16.2 & 43.8 & 70.9 & \textbf{4.6} & 9.8 & 24.9 & 55.9 & 86.2\tabularnewline
					& $F$ & \textbf{6.9} & 6.9 & 13.4 & 39.6 & 66.5 & \textbf{3.8} & 9.5 & 24.1 & 54.6 & 85.6\tabularnewline
					& GPF & \textbf{6.3} & 7.7 & 15.9 & 44.1 & 71.2 & \textbf{4.4} & 9.9 & 24.7 & 56.0 & 86.4\tabularnewline
					& $F_{\text{max}}$ & \textbf{5.9} & 15.8 & 56.8 & 95.1 & 100 & \textbf{4.4} & 26.9 & 82.1 & 99.4 & 100\tabularnewline
					& MMD & \textbf{5.8} & 31.8 & 99.9 & 100 & 100 & \textbf{4.2} & 68.7 & 100 & 100 & 100\tabularnewline
					\hline 
					\multirow{6}{*}{0.5} & $\delta$ & \textbf{0} & 0.05 & 0.1 & 0.15 & 0.2 & \textbf{0} & 0.04 & 0.08 & 0.12 & 0.16\tabularnewline
					& $L^{2}$ & \textbf{4.1} & 5.9 & 10.3 & 15.4 & 25.0 & \textbf{5.3} & 8.5 & 18.4 & 35.7 & 62.5\tabularnewline
					& $F$ & \textbf{3.3} & 4.6 & 9.1 & 13.2 & 21.8 & \textbf{4.7} & 7.9 & 17.2 & 34.5 & 60.7\tabularnewline
					& GPF & \textbf{4.3} & 6.0 & 11.2 & 16.3 & 26.9 & \textbf{5.6} & 8.6 & 18.5 & 37.3 & 64.3\tabularnewline
					& $F_{\text{max}}$ & \textbf{3.3} & 6.5 & 17.5 & 35.8 & 64.1 & \textbf{4.7} & 10.8 & 40.7 & 81.0 & 98.3\tabularnewline
					& MMD & \textbf{4.8} & 19.7 & 92.7 & 100 & 100 & \textbf{4.9} & 65.7 & 100 & 100 & 100\tabularnewline
					\hline 
					\multirow{6}{*}{0.9} & $\delta$ & \textbf{0} & 0.15 & 0.3 & 0.45 & 0.6 & \textbf{0} & 0.1 & 0.2 & 0.3 & 0.4\tabularnewline
					& $L^{2}$ & \textbf{4.9} & 5.8 & 12.5 & 25.3 & 48.1 & \textbf{5.1} & 8.3 & 20.5 & 46.9 & 74.4\tabularnewline
					& $F$ & \textbf{3.3} & 4.0 & 10.3 & 20.1 & 41.9 & \textbf{4.8} & 7.5 & 20.0 & 45.0 & 73.4\tabularnewline
					& GPF & \textbf{6.2} & 7.7 & 16.2 & 29.7 & 52.1 & \textbf{5.6} & 9.1 & 21.9 & 49.2 & 76.4\tabularnewline
					& $F_{\text{max}}$ & \textbf{6.5} & 6.5 & 12.3 & 23.5 & 44.3 & \textbf{5.1} & 7.6 & 18.2 & 42.2 & 72.0\tabularnewline
					& MMD & \textbf{5.3} & 9.3 & 31.1 & 80.0 & 100 & \textbf{4.4} & 14.9 & 74.6 & 100 & 100\tabularnewline
					\hline 
		\end{tabular}}}
		\par\end{centering}
\end{table}
\begin{table}[tbh]
	\centering{}\label{tbl:ANOVAt}\caption{Empirical powers (in percent) of $L^{2}$, $F$, GPF, $F_{\text{max}}$
		and $\text{MMD}_0$ for one-way ANOVA problem when $z_{ijr}\stackrel{iid}{\sim}t_{4}/\sqrt{2}$.}
	{\footnotesize{
			\begin{tabular}{cc|ccccc|ccccc}
				\hline 
				$\rho$ &  & \multicolumn{5}{c|}{$\left(n_{i}\right)=\left(20,30,30\right)$} & \multicolumn{5}{c}{$\left(n_{i}\right)=\left(70,80,100\right)$}\tabularnewline
				\hline 
				\multirow{6}{*}{0.1} & $\delta$ & \textbf{0} & 0.015 & 0.03 & 0.05 & 0.065 & \textbf{0} & 0.01 & 0.02 & 0.03 & 0.04\tabularnewline
				& $L^{2}$ & \textbf{7.4} & 7.7 & 17.6 & 42.0 & 69.1 & \textbf{6.1} & 9.3 & 24.6 & 53.9 & 86.6\tabularnewline
				& $F$ & \textbf{5.8} & 6.0 & 14.7 & 35.6 & 64.1 & \textbf{5.8} & 8.6 & 23.2 & 52.5 & 85.7\tabularnewline
				& GPF & \textbf{7.0} & 7.3 & 17.5 & 41.3 & 68.9 & \textbf{6.3} & 9.2 & 24.9 & 53.8 & 86.1\tabularnewline
				& $F_{\text{max}}$ & \textbf{6.3} & 16.9 & 54.8 & 96.9 & 99.9 & \textbf{6.0} & 24.8 & 80.3 & 99.5 & 100\tabularnewline
				& MMD & \textbf{6.3} & 34.2 & 100 & 100 & 100 & \textbf{6.0} & 71.0 & 100 & 100 & 100\tabularnewline
				\hline 
				\multirow{6}{*}{0.5} & $\delta$ & \textbf{0} & 0.05 & 0.1 & 0.15 & 0.2 & \textbf{0} & 0.04 & 0.08 & 0.12 & 0.16\tabularnewline
				& $L^{2}$ & \textbf{4.4} & 6.1 & 9.8 & 16.0 & 25.8 & \textbf{4.9} & 8.0 & 17.4 & 36.7 & 59.6\tabularnewline
				& $F$ & \textbf{4.1} & 4.6 & 8.4 & 13.4 & 22.6 & \textbf{4.6} & 7.6 & 16.3 & 35.6 & 57.9\tabularnewline
				& GPF & \textbf{4.6} & 6.6 & 10.1 & 16.6 & 25.8 & \textbf{5.0} & 8.3 & 18.2 & 37.8 & 62.0\tabularnewline
				& $F_{\text{max}}$ & \textbf{4.2} & 7.0 & 14.6 & 35.9 & 62.6 & \textbf{4.8} & 9.6 & 40.8 & 82.6 & 98.5\tabularnewline
				& MMD & \textbf{5.4} & 23.1 & 94.2 & 100 & 100 & \textbf{4.6} & 68.6 & 100 & 100 & 100\tabularnewline
				\hline 
				\multirow{6}{*}{0.9} & $\delta$ & \textbf{0} & 0.15 & 0.3 & 0.45 & 0.6 & \textbf{0} & 0.1 & 0.2 & 0.4 & 0.4\tabularnewline
				& $L^{2}$ & \textbf{3.9} & 4.5 & 11.7 & 27.7 & 47.2 & \textbf{5.0} & 7.3 & 20.8 & 46.2 & 74.3\tabularnewline
				& $F$ & \textbf{2.8} & 3.2 & 8.4 & 23.4 & 40.9 & \textbf{4.6} & 6.7 & 19.5 & 44.6 & 73.2\tabularnewline
				& GPF & \textbf{5.4} & 5.9 & 15.2 & 31.0 & 52.3 & \textbf{5.2} & 7.9 & 22.9 & 48.1 & 75.8\tabularnewline
				& $F_{\text{max}}$ & \textbf{4.9} & 5.2 & 11.9 & 26.4 & 46.5 & \textbf{5.3} & 7.1 & 18.7 & 42.7 & 74.2\tabularnewline
				& MMD & \textbf{5.0} & 8.9 & 30.9 & 78.7 & 100 & \textbf{5.3} & 13.1 & 75.7 & 100 & 100\tabularnewline
				\hline 
	\end{tabular}}}
\end{table}
\subsection{Testing for Equality of Covariance Operators}\label{sec:homCovTest}
Let $\mathbb{H}$ be a separable Hilbert space. Assume that
$Y_{ij}$ for $i=1,\ldots,k$ and $j=1,\ldots,n_{i}$ are independent
$\mathbb{H}$-valued Gaussian random elements, and $y_{ij}$ are their observation counterparts generated from the following model:
\begin{equation}
Y_{ij}=\mu_{i}+\varepsilon_{ij},\quad\varepsilon_{ij}\stackrel{ind}{\sim}\mathcal{N}\left(0,C_{i}\right);\quad i=1,\ldots,k,\;j=1,\ldots,n_{i},\label{eq:CovModel}
\end{equation}
where $\mu_{i}$ is the unknown mean function of group $i$,
and $\varepsilon_{ij}$ accounts for subject-effect functions with
mean zero and covariance operator $C_{i}=E\left[\varepsilon_{ij}\otimes\varepsilon_{ij}\right]$.
It is of interest to test the equality of $k$ covariance operators,
i.e. $H_{0}:C_{1}=\cdots=C_{k}$. Based on the proof of Proposition \ref{prop:GausKMF}, the squared MMD for comparing two Gaussian probability measures
$\mathcal{N}\left(\mu_{1},C_{1}\right)$ and $\mathcal{N}\left(\mu_{2},C_{2}\right)$
equal to
\begin{align*}
\left\Vert m_{P_{1}}-m_{P_{2}}\right\Vert _{\mathcal{H}_{k}}^{2} & =\left\Vert m_{P_{1}}\right\Vert _{\mathcal{H}_{k}}^{2}+\left\Vert m_{P_{2}}\right\Vert _{\mathcal{H}_{k}}^{2}-2\left\langle m_{P_{1}},m_{P_{2}}\right\rangle _{\mathcal{H}_{k}}\\
& =\left|I+4\sigma C_{1}\right|^{-\nicefrac{1}{2}}+\left|I+4\sigma C_{2}\right|^{-\nicefrac{1}{2}}\\
&\phantom{=}-2\left|I+2\sigma\left(C_{1}+C_{2}\right)\right|^{\nicefrac{-1}{2}}e^{-\sigma\left\langle \left(I+2\sigma\left(C_{1}\hspace{-0.2em}+\hspace{-0.2em}C_{2}\right)\right)^{-1}\left(\mu_{1}-\mu_{2}\right),\left(\mu_{1}-\mu_{2}\right)\right\rangle }.
\end{align*}
To develop the MMD statistic based on a simple random sample from the
model (\ref{eq:CovModel}), first, we have to choose a proper characteristic kernel
for the space $\mathbb{H}^{\sum\limits _{i=1}^{k}n_{i}}$. By Proposition
\ref{prop:GausKMF}, 
Gaussian kernel $k\left(x,y\right)=e^{-\sigma\left\Vert x-y\right\Vert _{\mathbb{H}}^{2}}$
is characteristic for the family of Gaussian probability measures
on $\mathbb{H}$, and from Theorem \ref{thm:tensorProdCharKer}, $\left(\left\{ x_{i}\right\} ,\left\{ y_{i}\right\} \right)\mapsto\sum_{i=1}^{n}k\left(x_{i},y_{i}\right)$
is a characteristic kernel for the family of finite product of Gaussian probability
measures on $\mathbb{H}^{n}$. Let $P_{1}$ be the probability measure
of samples generated by the model (\ref{eq:CovModel}) under $H_{0}$,
i.e. $C_{1}=\ldots=C_{k}=C$ and $P_{2}$ be the probability
measure induced by the model (\ref{eq:CovModel}), when parameters $C_{i}$
considered to be free. For the centered version of the model (\ref{eq:CovModel}),
that is, $\mu_{1}=\cdots=\mu_{k}=0$, the squared MMD with Gaussian Sum-Kernel equals 
\begin{align*}
\text{MMD}^{2}&=\left\Vert m_{P_{1}}-m_{P_{2}}\right\Vert _{\mathcal{H}_{k}}^{2}\\
& =n\left|I+4\sigma C\right|^{-\nicefrac{1}{2}}+\sum_{i=1}^{k}n_{i}\left|I+4\sigma C_{i}\right|^{-\nicefrac{1}{2}}-2\sum_{i=1}^{k}n_{i}\left|I+2\sigma\left(C+C_{i}\right)\right|^{\nicefrac{-1}{2}}\\
& =\sum_{i=1}^{k}n_{i}\left(\left|I+4\sigma C\right|^{-\nicefrac{1}{2}}+\left|I+4\sigma C_{i}\right|^{-\nicefrac{1}{2}}-2\left|I+2\sigma\left(C+C_{i}\right)\right|^{\nicefrac{-1}{2}}\right).
\end{align*}
There currently developed tests for the $k$-sample equality of covariance functions
problem, if $\mathbb{H}$ considered being the space of square-integrable
functions over a compact set like $L^{2}\left[0,1\right]$. We address two recent successfully developed tests for homogeneity of covariance functions, namely quasi-GPF
and quasi-$F_{max}$ introduced by \citet{guo2019new}. There
are other tests, which are shown to be of less powerful in different settings
relative to the currently mentioned tests. See \citet{guo2019new}
and references therein for more information and simulation studies. 
%Let $\gamma_{i}\left(x,t\right)$ be
%the kernel of covariance operators $C_{i}$, then quasi-GPF and quasi-$F_{max}$
%is defined as a quasi point-wise F statistic by 
%\[
%\text{GPF}=\int\limits _{0}^{1}\int\limits _{0}^{1}F_{n}\left(s,t\right)dsdt,\qquad F_{\text{max}}=\sup_{s,t\in\left[0,1\right]}F_{n}\left(s,t\right)
%\]
%in which 
%\[
%F_{n}\left(s,t\right)=\frac{\nicefrac{\text{SSB}\left(s,t\right)}{\left(k-1\right)}}{\nicefrac{\text{SSE}\left(s,t\right)}{\left(n-k\right)}},
%\]
%and 
%\[
%\text{SSB}\left(s,t\right)=\sum_{i=1}^{k}\left(n_{i}-1\right)\left[\hat{\gamma}_{i}\left(s,t\right)-\hat{\gamma}\left(s,t\right)\right]^{2},
%\]
%\[
%\text{SSE}\left(s,t\right)=\sum_{i=1}^{k}\sum_{j=1}^{n_{i}}\left[\hat{\nu}_{ij}\left(s\right)\hat{\nu}_{ij}\left(t\right)-\hat{\gamma}_{i}\left(s,t\right)\right]^{2},
%\]
%where 
%\[
%\hat{\gamma}_{i}\left(s,t\right)=\left(n_{i}-1\right)^{-1}\sum_{j=1}^{n}\left[y_{ij}\left(s\right)-\bar{y}_{i}\left(s\right)\right]\left[y_{ij}\left(t\right)-\bar{y}_{i}\left(t\right)\right],
%\]
%and 
%\[
%\hat{\gamma}\left(s,t\right)=\left(n-k\right)^{-1}\sum_{i=1}^{k}\left(n_{i}-1\right)\hat{\gamma}_{i}\left(s,t\right),
%\]
%in which $\bar{y}_{i}\left(t\right)=n_{i}^{-1}\sum_{j=1}^{n_{i}}y_{ij}\left(t\right)$
%and $\hat{\nu}_{ij}\left(t\right)=y_{ij}\left(t\right)-\bar{y}_{i}\left(t\right)$.
We compared the new MMD test against quasi-GPF and quasi-$F_{\max}$ in a simulation study. Our simulation study is motivated by \citet{guo2019new}, and we used the same setup for the data generating procedure. Assume that the mean function is zero and data is
generated in a $k-$regime scheme according to the following model:
\[
y_{ij}\left(t\right)=\varepsilon_{ij}\left(t\right),\quad\varepsilon_{ij}\left(t\right)=h\left(t\right)\sum_{r\geq1}\sqrt{\lambda_{r}}z_{ijr}\psi_{ir}\left(t\right)
\]
\[
i=1,\ldots,k;\quad j=1,\ldots,n_{i};\quad t\in\left[0,1\right].
\]
where $h\left(t\right)$
is common for all the groups and $n_{i}$ denotes the size of each group. 
The set $\{\psi_{ir}\}_{r\geq 1}$, for each $i$, is a set of basis functions, and we set 
the eigenvalues $\lambda_{r}=a\rho^{r-1}$ for fixed $a>0$
and $\rho\in\left(0,1\right)$. The tuning parameter $\rho$ determines
the decay rate of eigenvalues. For a $\rho$ close to zero,
eigenvalues decay fast and functional data is more correlated
and more smooth, however, for a $\rho$ close to one, eigenvalues
decay slowly and realization of functional data is less correlated across its domain
and thus less smooth.

\begin{table}[tbh]
	\caption{Empirical powers (in percent) of $L^2$, $T_{\max}$, GPF, $F_{\max}$ and MMD when
		$z_{ijr}\stackrel{iid}{\sim}N\left(0,1\right)$.}
	
	\centering{}{\footnotesize{
			\begin{tabular}{cc|ccccc|ccccc}
				\hline 
				$\rho$ &  & \multicolumn{5}{c|}{$\left(n_{i}\right)=\left(20,30,30\right)$} & \multicolumn{5}{c}{$\left(n_{i}\right)=\left(70,80,100\right)$}\tabularnewline
				\hline 
				\multirow{6}{*}{0.1} & $\omega$ & \textbf{0} & 0.5 & 1 & 2.5 & 5 & \textbf{0} & 0.5 & 1 & 2.5 & 4\tabularnewline
				& $L^{2}$ & \textbf{5.3} & 5.4 & 6.3 & 7.1 & 8.1 & \textbf{4.8} & 4.7 & 6.3 & 6.0 & 7.2\tabularnewline
				& $T_{\max}$ & \textbf{4.8} & 5.0 & 5.7 & 6.7 & 7.6 & \textbf{4.5} & 4.6 & 6.4 & 6.2 & 8.1\tabularnewline
				& GPF & \textbf{4.4} & 5.3 & 7.1 & 19.4 & 72.9 & \textbf{5.2} & 6.4 & 7.6 & 70.1 & 100\tabularnewline
				& $F_{\text{max}}$ & \textbf{4.9} & 5.7 & 9.7 & 48.7 & 89.1 & \textbf{5.2} & 8.1 & 21.2 & 100 & 100\tabularnewline
				& MMD & \textbf{4.4} & 100 & 100 & 100 & 100 & \textbf{4.7} & 100 & 100 & 100 & 100\tabularnewline
				\hline 
				\multirow{6}{*}{0.5} & $\omega$ & \textbf{0} & 0.5 & 1 & 1.5 & 3 & \textbf{0} & 0.5 & 0.8 & 1.1 & 1.4\tabularnewline
				& $L^{2}$ & \textbf{4.6} & 5.0 & 6.3 & 6.7 & 5.3 & \textbf{4.8} & 4.6 & 4.4 & 6.2 & 5.3\tabularnewline
				& $T_{\max}$ & \textbf{4.7} & 4.5 & 6.1 & 4.0 & 5.4 & \textbf{4.6} & 4.9 & 5.1 & 5.8 & 5.4\tabularnewline
				& GPF & \textbf{5.1} & 9.5 & 27.3 & 46.7 & 95.1 & \textbf{5.6} & 28.7 & 65.7 & 91.5 & 99.3\tabularnewline
				& $F_{\text{max}}$ & \textbf{4.2} & 12.0 & 30.1 & 55.4 & 97.3 & \textbf{5.3} & 35.4 & 77.8 & 97.4 & 99.6\tabularnewline
				& MMD & \textbf{4.8} & 99.7 & 100 & 100 & 100 & \textbf{4.4} & 100 & 100 & 100 & 100\tabularnewline
				\hline 
				\multirow{6}{*}{0.9} & $\omega$ & \textbf{0} & 0.5 & 0.8 & 1.2 & 1.5 & \textbf{0} & 0.4 & 0.6 & 0.8 & 1\tabularnewline
				& $L^{2}$ & \textbf{5.7} & 6.6 & 5.7 & 5.4 & 6.7 & \textbf{5.9} & 4.8 & 5.6 & 5.1 & 5.8\tabularnewline
				& $T_{\max}$ & \textbf{4.7} & 6.7 & 6.0 & 5.3 & 6.8 & \textbf{4.6} & 4.7 & 6.0 & 5.4 & 5.7\tabularnewline
				& GPF & \textbf{5.5} & 16.4 & 29.4 & 60.8 & 75.7 & \textbf{5.2} & 25.2 & 55.1 & 84.5 & 98.7\tabularnewline
				& $F_{\text{max}}$ & \textbf{5.1} & 8.3 & 14.5 & 19.5 & 33.2 & \textbf{5.4} & 14.2 & 24.5 & 43.4 & 65.3\tabularnewline
				& MMD & \textbf{4.4} & 100 & 100 & 100 & 100 & \textbf{4.3} & 100 & 100 & 100 & 100\tabularnewline
				\hline 
	\end{tabular}}}%{\footnotesize\par}
	\label{tblCovNormal}
\end{table}

\begin{table}[tbh]
	\caption{Empirical powers (in percent) of $L^2$, $T_{\max}$, GPF, $F_{\max}$ and MMD when $z_{ijr}\stackrel{iid}{\sim}\sqrt{\nicefrac{3}{5}}~t_{5}$.}
	
	\centering{}{\footnotesize{
			\begin{tabular}{cc|ccccc|ccccc}
				\hline 
				$\rho$ &  & \multicolumn{5}{c|}{$\left(n_{i}\right)=\left(20,30,30\right)$} & \multicolumn{5}{c}{$\left(n_{i}\right)=\left(70,80,100\right)$}\tabularnewline
				\hline 
				\multirow{6}{*}{0.1} & $\omega$ & \textbf{0} & 0.5 & 1 & 2.5 & 5 & \textbf{0} & 0.5 & 1 & 2.5 & 4\tabularnewline
				& $L^{2}$ & \textbf{5.3} & 4.4 & 5.2 & 4.3 & 5.6 & \textbf{5.6} & 7.2 & 7.6 & 6.4 & 5.6\tabularnewline
				& $T_{\max}$ & \textbf{4.8} & 5.2 & 5.6 & 5.7 & 7.1 & \textbf{4.9} & 5.7 & 5.8 & 6.9 & 7.1\tabularnewline
				& GPF & \textbf{4.9} & 4.5 & 10.0 & 12.9 & 58.0 & \textbf{4.8} & 8.5 & 10.1 & 30.6 & 80.3\tabularnewline
				& $F_{\text{max}}$ & \textbf{5.2} & 8.5 & 13.1 & 34.1 & 70.1 & \textbf{5.7} & 12.8 & 20.4 & 88.5 & 98.5\tabularnewline
				& MMD & \textbf{4.2} & 100 & 100 & 100 & 100 & \textbf{4.4} & 100 & 100 & 100 & 100\tabularnewline
				\hline 
				\multirow{6}{*}{0.5} & $\omega$ & \textbf{0} & 0.5 & 1 & 1.5 & 3 & \textbf{0} & 0.5 & 0.8 & 1.1 & 1.4\tabularnewline
				& $L^{2}$ & \textbf{5.0} & 4.3 & 4.3 & 5.1 & 5.6 & \textbf{5.1} & 5.6 & 4.3 & 6.9 & 6.3\tabularnewline
				& $T_{\max}$ & \textbf{4.7} & 5.5 & 4.8 & 5.7 & 7.2 & \textbf{4.7} & 5.4 & 4.4 & 5.7 & 6.8\tabularnewline
				& GPF & \textbf{5.4} & 14.2 & 21.4 & 34.2 & 72.8 & \textbf{4.9} & 15.7 & 44.2 & 65.7 & 84.2\tabularnewline
				& $F_{\text{max}}$ & \textbf{4.6} & 12.8 & 22.6 & 40.3 & 68.5 & \textbf{5.4} & 24.2 & 61.4 & 84.2 & 91.4\tabularnewline
				& MMD & \textbf{4.8} & 84.2 & 100 & 100 & 100 & \textbf{5.0} & 100 & 100 & 100 & 100\tabularnewline
				\hline 
				\multirow{6}{*}{0.9} & $\omega$ & \textbf{0} & 0.5 & 0.8 & 1.2 & 1.5 & \textbf{0} & 0.4 & 0.6 & 0.8 & 1\tabularnewline
				& $L^{2}$ & \textbf{4.1} & 4.3 & 4.2 & 3.8 & 4.9 & \textbf{5.2} & 6.4 & 5.3 & 6.7 & 6.5\tabularnewline
				& $T_{\max}$ & \textbf{5.9} & 4.2 & 5.5 & 5.0 & 5.8 & \textbf{4.9} & 6.9 & 7.1 & 6.9 & 6.8\tabularnewline
				& GPF & \textbf{5.2} & 15.7 & 18.5 & 31.4 & 54.9 & \textbf{4.4} & 10.3 & 44.3 & 67.1 & 72.8\tabularnewline
				& $F_{\text{max}}$ & \textbf{4.1} & 11.4 & 10.3 & 12.8 & 30.3 & \textbf{5.1} & 7.1 & 12.8 & 23.8 & 50.3\tabularnewline
				& MMD & \textbf{5.4} & 91.4 & 100 & 100 & 100 & \textbf{4.8} & 100 & 100 & 100 & 100\tabularnewline
				\hline 
	\end{tabular}}}%
	\label{tblCovT}
\end{table}

Although \citet{guo2019new} assumed a finite
number of $q$ nonzero eigenvalues in the simulation process, our test works well without the need to put any restriction
on the number of components. Here we follow \citet{guo2019new} and 
fix $q=41$, $a=1.5$, $T=80$, $h\left(t\right)=\frac{T}{t+1}$ where
$T$ is the number of time points that each curve is observed at.
Different setups for data generating procedure is a combination
of the following choice of parameters:
\begin{itemize}
	\item $z_{ijr}\stackrel{iid}{\sim}N\left(0,1\right)$ or $z_{ijr}\stackrel{iid}{\sim}\sqrt{\nicefrac{3}{5}}~t_{5}$.
	\item $\rho=0.1, 0.5, 0.9$ for three class of high, moderate and low correlations.
	\item $\left(n_{i}\right)=\left(20, 30, 30\right)$ for the small sample
	%, $\left(n_{i}\right)=\left(30, 40, 50\right)$ for the medium sample
	and $\left(n_{i}\right)=\left(70, 80, 100\right)$ for the large sample
	cases.
	\item $\psi_{ir}\left(t\right)=\phi_{r}\left(t\right)$ for $r=1, 3, 4,\ldots,q$
	and $\psi_{i2}\left(t\right)=\phi_{2}\left(t\right)+\left(i-1\right)\omega/h\left(t\right)$
	for different choice of $\omega$ to reflect between group difference
	of covariance operators, and we can take $\left\{ \phi_{r}\right\} _{r\geq1}$
	either the set of Fourier or B-spline basis for $L^{2}\left[0,1\right]$.
\end{itemize}

\citet{guo2019new} compared quasi-GPF and quasi-$F_{\text{max}}$
with few other tests including two other tests $L^2$ and $T_{\max}$ \citep{gou2018}. According
to their results, quasi-GPF is superior in low correlation schemes, and
quasi-$F_{\text{max}}$ is superior in the high correlation schemes.
Although In this paper we derived kernel mean embedding and MMD statistic for the family of Gaussian probability
distributions, it can be noticed from Tables \ref{tblCovNormal} and \ref{tblCovT} that the new MMD based test dominates the quasi-GPF and quasi-$F_{\text{max}}$ in all situations including non-Gaussian scenarios.

Let $\hat{C}$ be the usual estimation of the covariance operator under
$H_{0}$ and $\hat{C}_{i}$ the usual covariance operator estimation
of group $i$. Then our MMD test statistic equals  
\[
\hat{\text{MMD}}=\left[\sum_{i=1}^{k}n_{i}\left(\left|I+4\sigma\hat{C}\right|^{-\nicefrac{1}{2}}\hspace{-0.45em}+\left|I+4\sigma\hat{C}_{i}\right|^{-\nicefrac{1}{2}}\hspace{-0.45em}-2\left|I+2\sigma\left(\hat{C}+\hat{C}_{i}\right)\right|^{\nicefrac{-1}{2}}\right)\right]^{\nicefrac{1}{2}}.
\]
In this simulation study, we take $\sigma=1e3$. The null distribution of all the test statistics $L^2$, $T_{\max}$, $F_{\max}$, GPF and MMD are approximated by
the random permutation method. The empirical powers of the five test statistics are calculated in a simulation study. Results for $\alpha=0.05$ and $\left\{ \phi_{r}\right\} _{r\geq1}$ selected to be the set of
Fourier basis are presented in Tables \ref{tblCovNormal} and \ref{tblCovT}. In this simulation study, we used the B-spline basis for the smoothing procedure. % to have a different basis other than the eigenfunctions of covariance operator, and 
The number of components for smoothing procedure is considered to be fixed and equals 41. 

According to the results,  the empirical  powers of MMD test are uniformly
higher than the other four tests in all the situations. The results presented in these tables were produced and reported by 2000 iterations.

\subsubsection{Medfly Data}
In this section, we apply MMD and the other four tests introduced in Section \ref{sec:homCovTest},  $L^2$, $T_{\max}$, $\text{GPF}$ and $F_{\max}$, to test for homogeneity of covariance operators in a real data example, according to the model (\ref{eq:CovModel}) with $k=4$.

Medfly data set is a functional data of mortality rate of medflies. 
Approximately, 7,200 medflies of a given size were maintained in aluminum cages. Adults were given either a diet of sugar and water, or a diet of sugar, water and ad libitum. Each day, dead flies were removed, counted, and their sex determined \citep{Carey1992}. The number and rate of alive  medflies were recorded over a period of 101 days. In effect, the aim is to assess the effects of nutrition and gender on survival or mortality of medlies.

Cohorts of medflies consist of four groups, (a) Females on a sugar diet, (b) Females on a protein plus sugar diet, (c) Males on a sugar diet, and (d) Males on a protein plus sugar diet. The effect of gender and nutrition is studied before \citep[see for example][]{Koenker2001,Muller2003data}, and it is known that there is an interaction between gender and nutrition on the survival of medflies \citep{Muller1998data}. Survival functions of cohorts of medflies during a period of 30 days (days 2-31) are illustrated in Figure \ref{fig:rfig}. 
\begin{figure}[th]
	\begin{subfigure}{.24\textwidth}
		\centering
		\includegraphics[width=.98\linewidth]{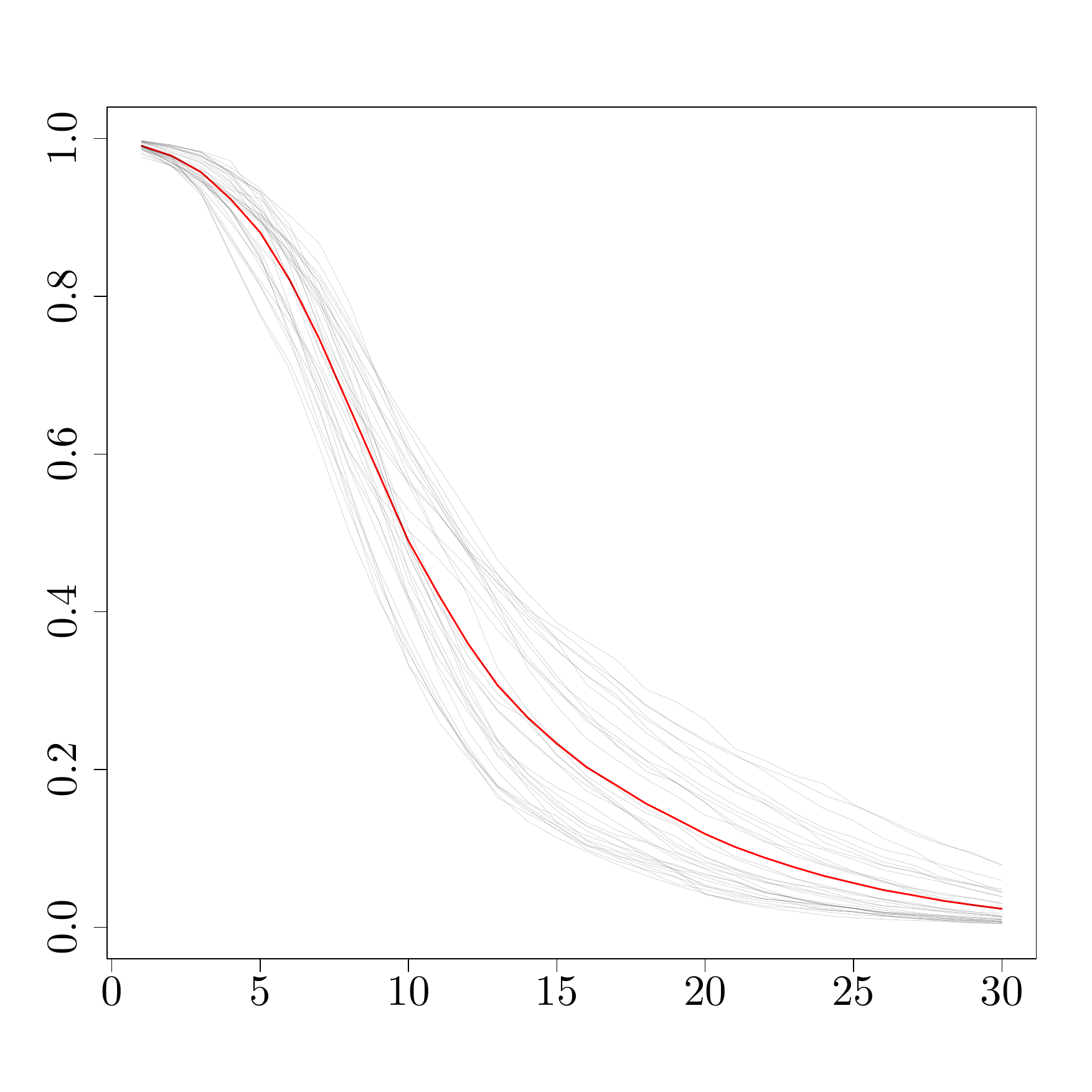}
		%		\caption{}
		%		\label{fig:ffig3}
		
	\end{subfigure}%
	\begin{subfigure}{.24\textwidth}
		\centering
		\includegraphics[width=.98\linewidth]{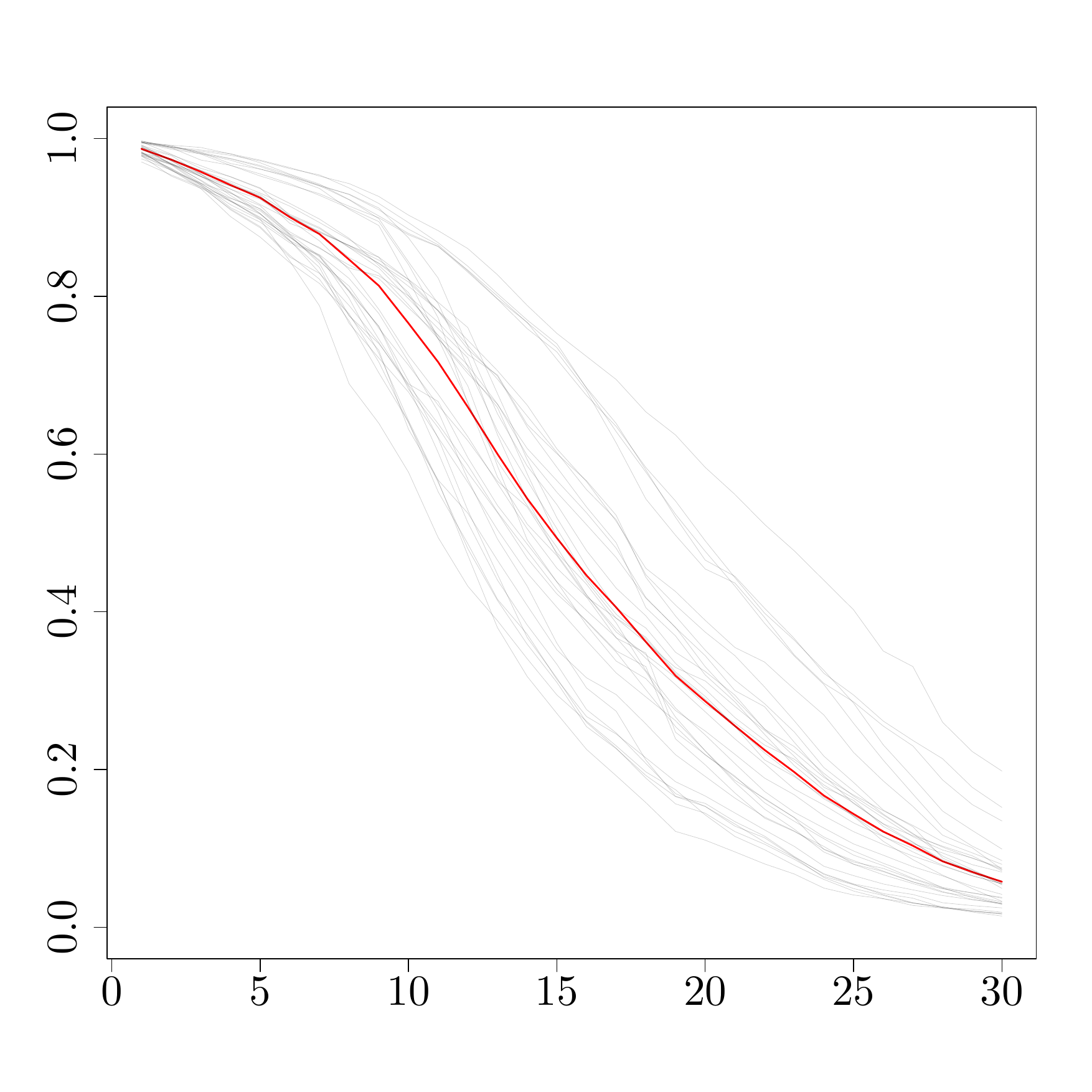}
		%\caption{}
		%\label{fig:ffig4}
	\end{subfigure}
	\begin{subfigure}{.24\textwidth}
		\centering
		\includegraphics[width=.98\linewidth]{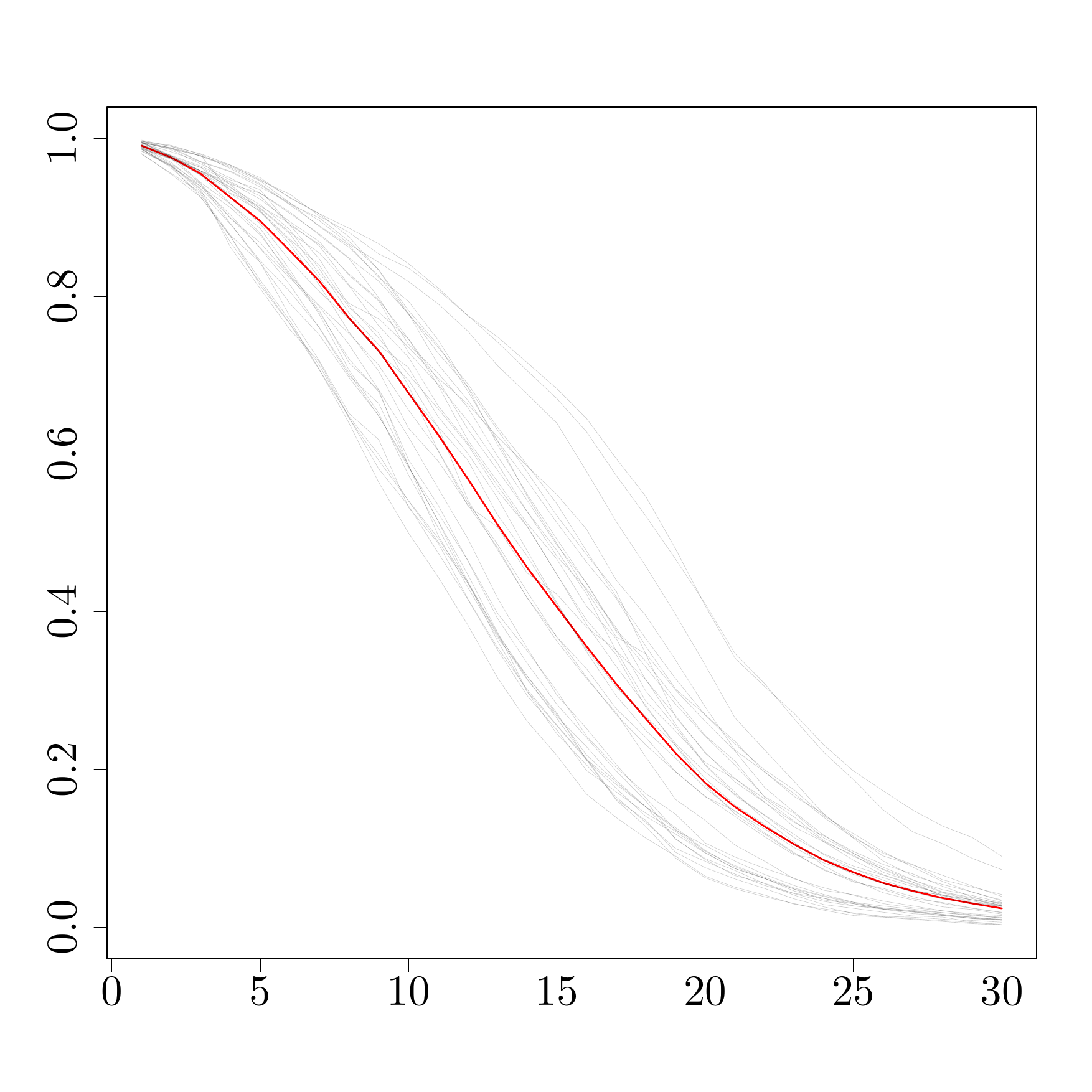}
		%\caption{}
		%\label{fig:ffig1}
	\end{subfigure}
	\begin{subfigure}{.24\textwidth}
		\centering
		\includegraphics[width=.98\linewidth]{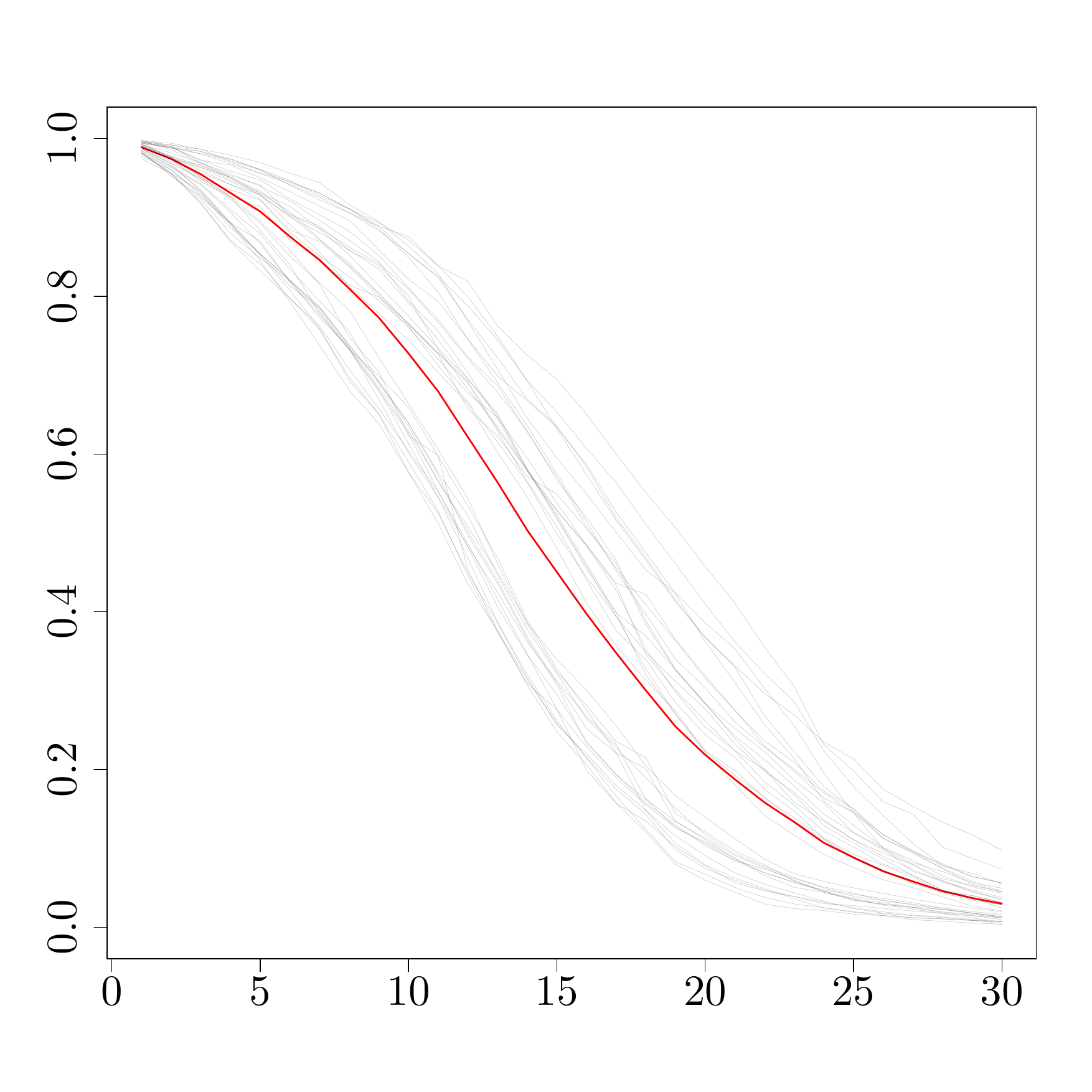}
		%\caption{}
		%\label{fig:ffig2}
	\end{subfigure}
	
	\begin{subfigure}{.24\textwidth}
		\centering
		\includegraphics[width=.98\linewidth]{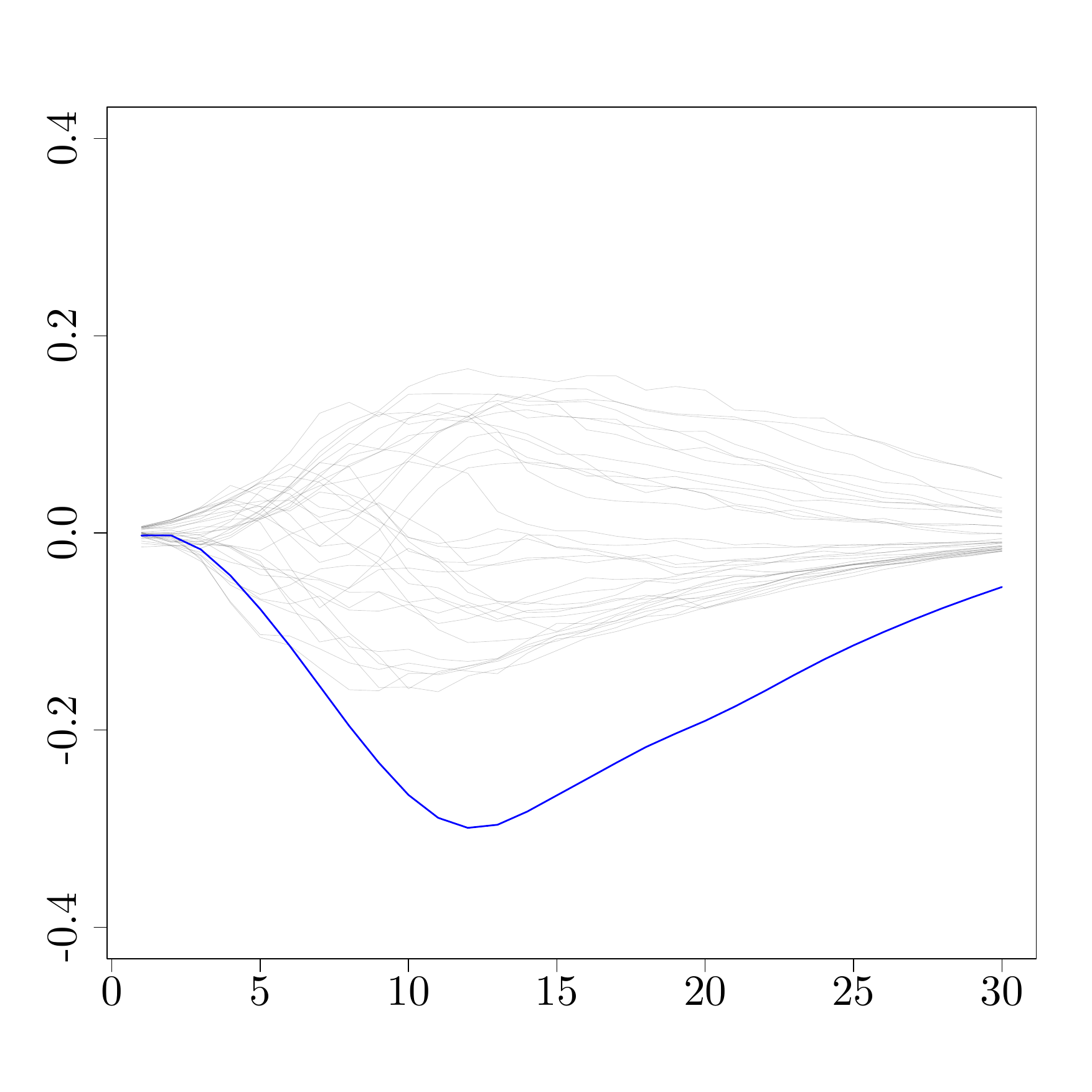}
		\caption{}
		%\label{fig:rfig3}
	\end{subfigure}%
	\begin{subfigure}{.24\textwidth}
		\centering
		\includegraphics[width=.98\linewidth]{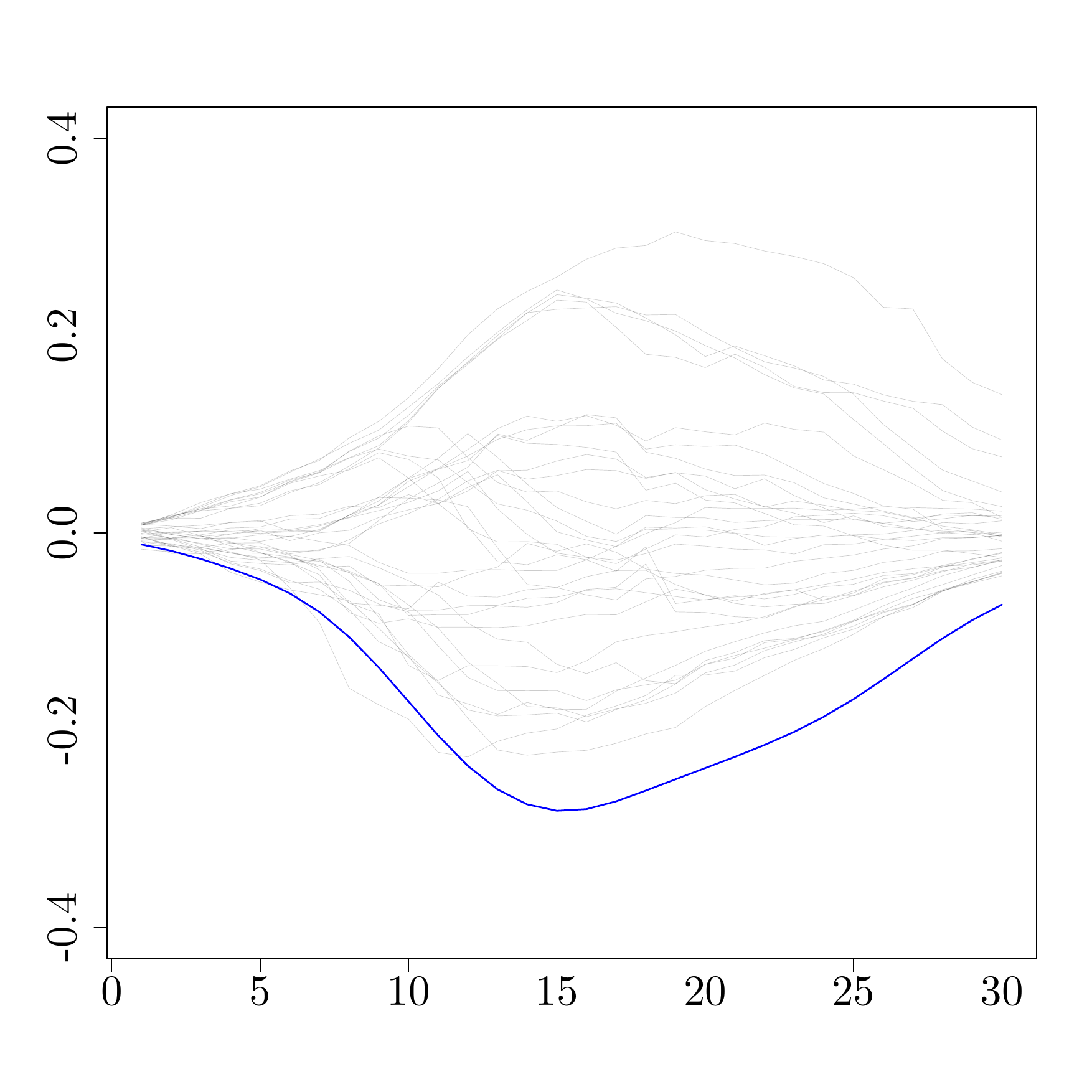}
		\caption{}
		\label{fig:rfig4}
	\end{subfigure}
	\begin{subfigure}{.24\textwidth}
		\centering
		\includegraphics[width=.98\linewidth]{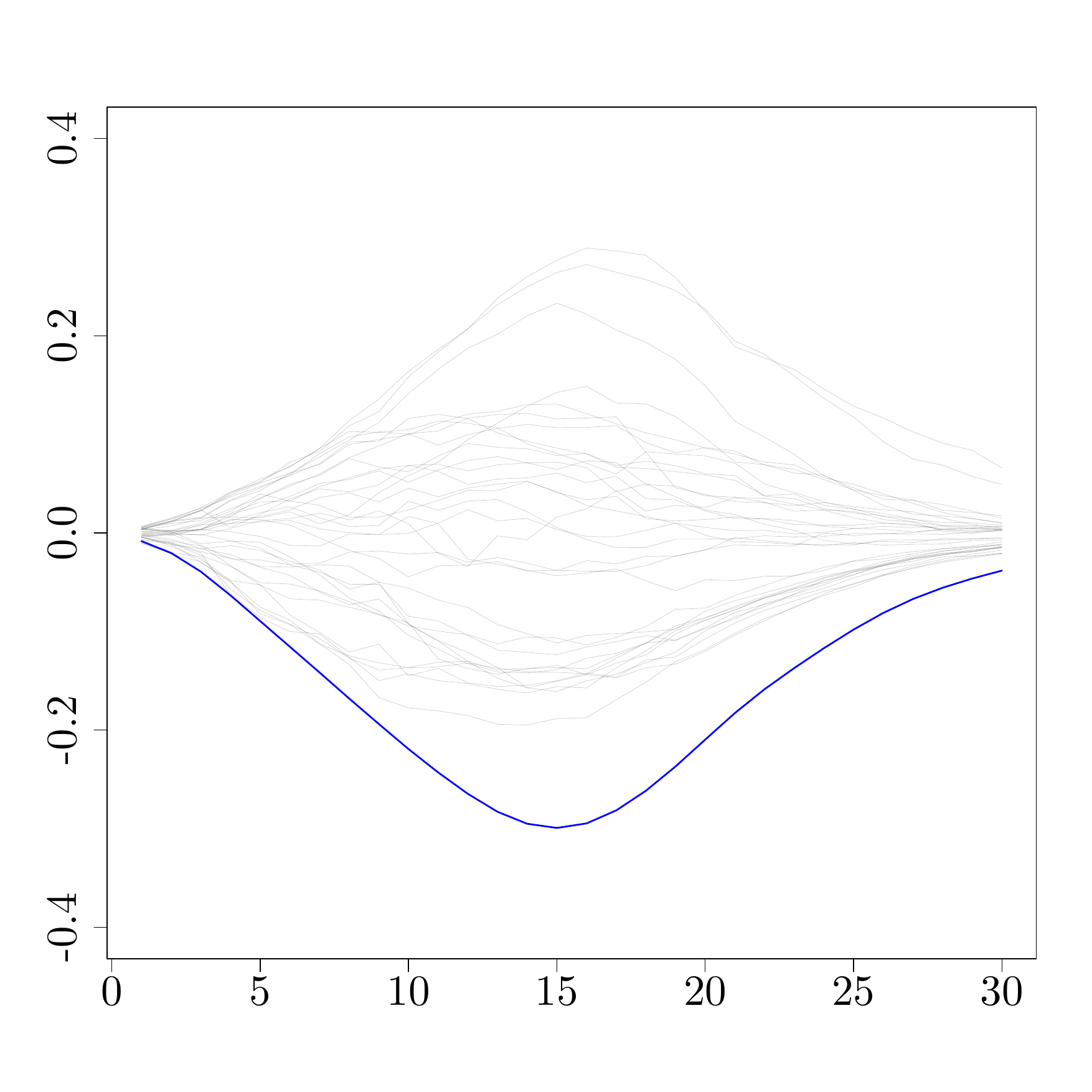}
		\caption{}
		%\label{fig:rfig1}
	\end{subfigure}
	\begin{subfigure}{.24\textwidth}
		\centering
		\includegraphics[width=.98\linewidth]{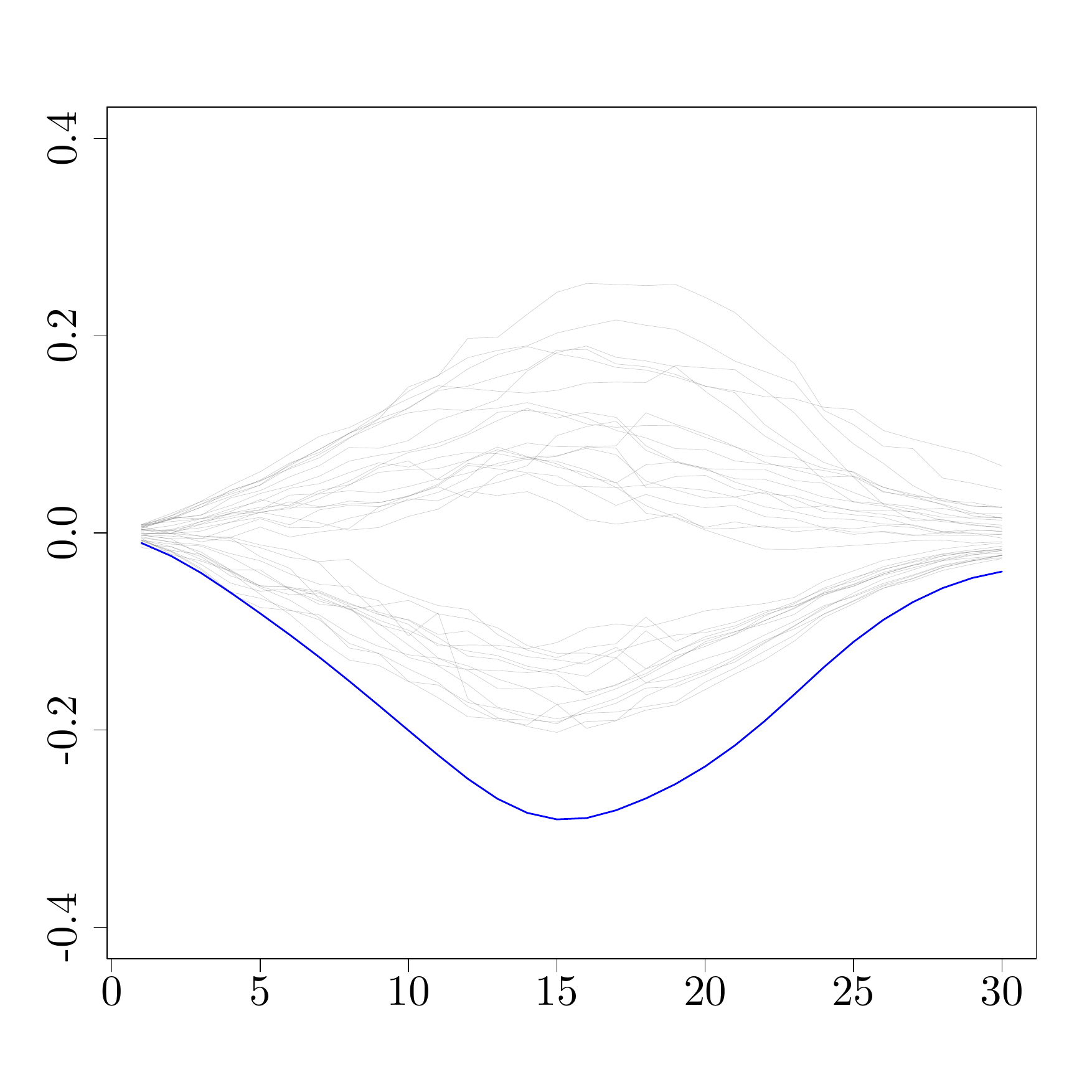}
		\caption{}
		%\label{fig:rfig2}
	\end{subfigure}
	\caption{Survival functions of cohorts of medflies:
		(a) Females on a sugar diet, (b) Females on a protein plus sugar diet, (c) Males on a sugar diet, and (d) Males on a protein plus sugar diet; (Top row) Gray lines: survival functions of samples, Red line: mean function, 
		(Bottom row) Gray lines: deviation of samples from the mean function, Blue line: first eigenfunction of covariance operators, which accounts for 
		89.3\%, 94.5\%, 96.4\% and 97.8\% of the variation of survival functions in each of four groups respectively.}
	\label{fig:rfig}
\end{figure}
\begin{figure}[bh!]
	\begin{subfigure}{.23\textwidth}
		\centering
		\includegraphics[width=.95\linewidth]{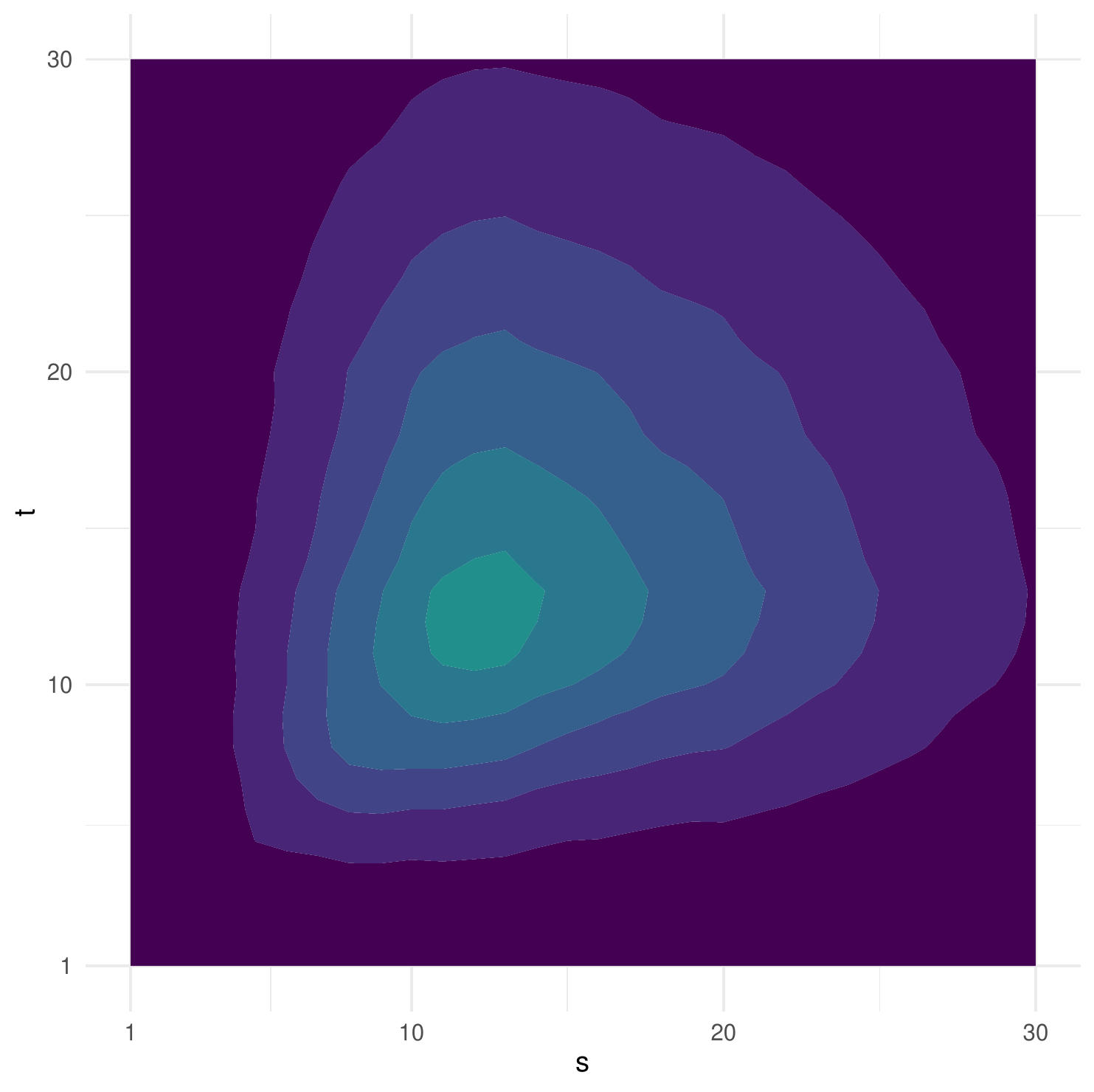}
		\caption{}
		\label{fig:sfig1}
	\end{subfigure}%
	\begin{subfigure}{.23\textwidth}
		\centering
		\includegraphics[width=.95\linewidth]{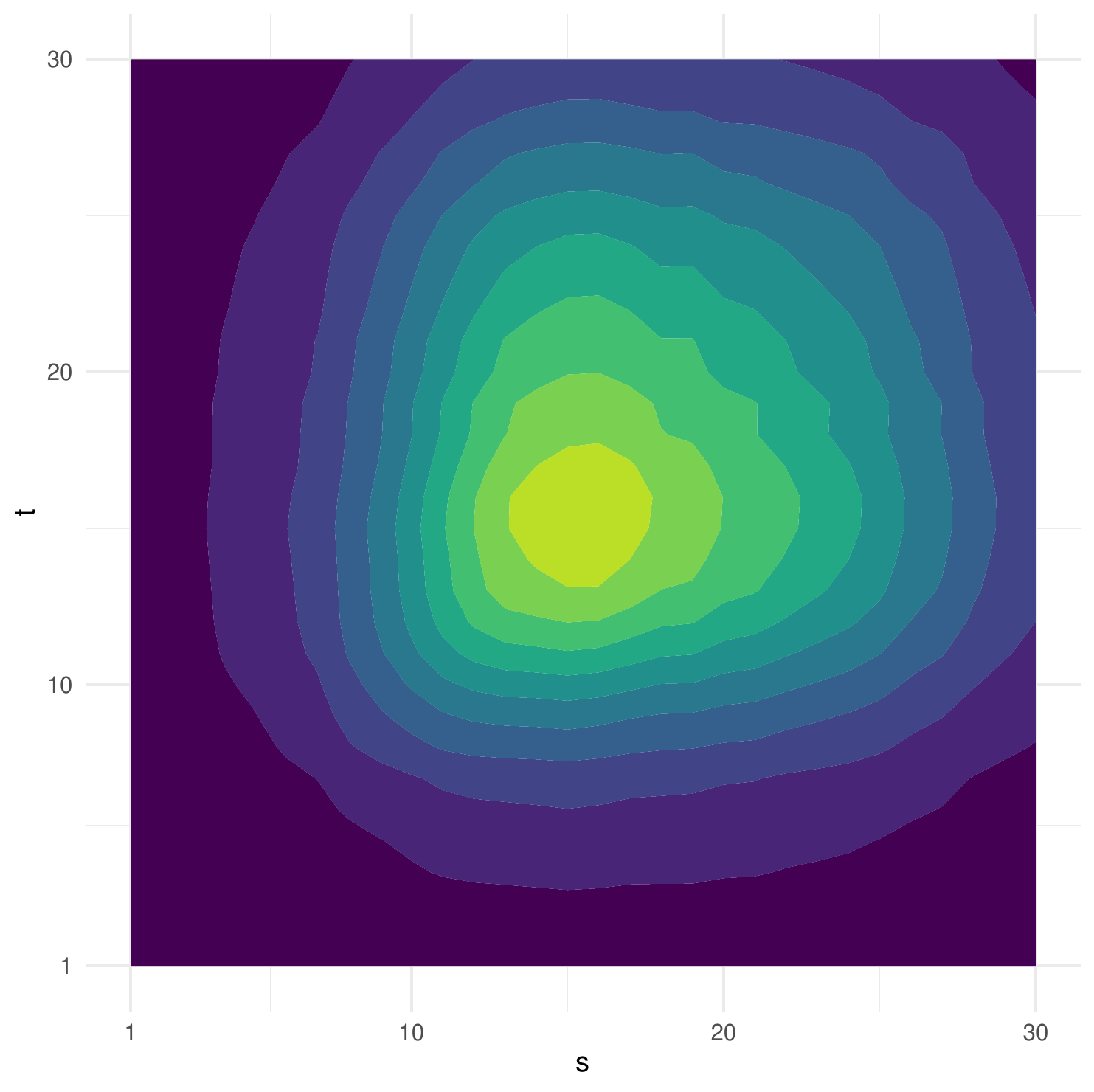}
		\caption{}
		\label{fig:sfig2}
	\end{subfigure}
	\begin{subfigure}{.23\textwidth}
		\centering
		\includegraphics[width=.95\linewidth]{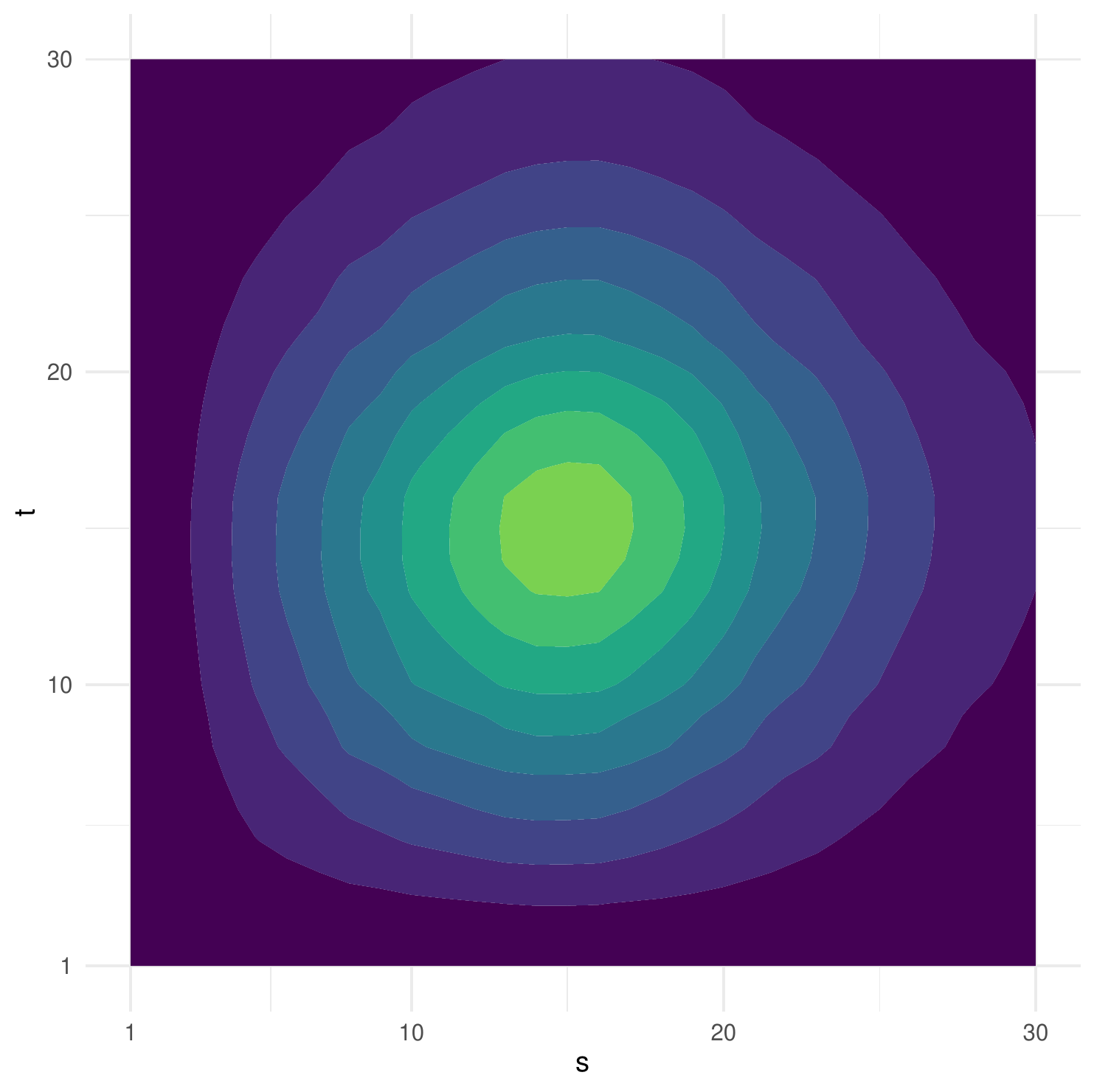}
		\caption{}
		\label{fig:sfig3}
	\end{subfigure}
	\begin{subfigure}{.285\textwidth}
		\centering
		\includegraphics[width=.95\linewidth]{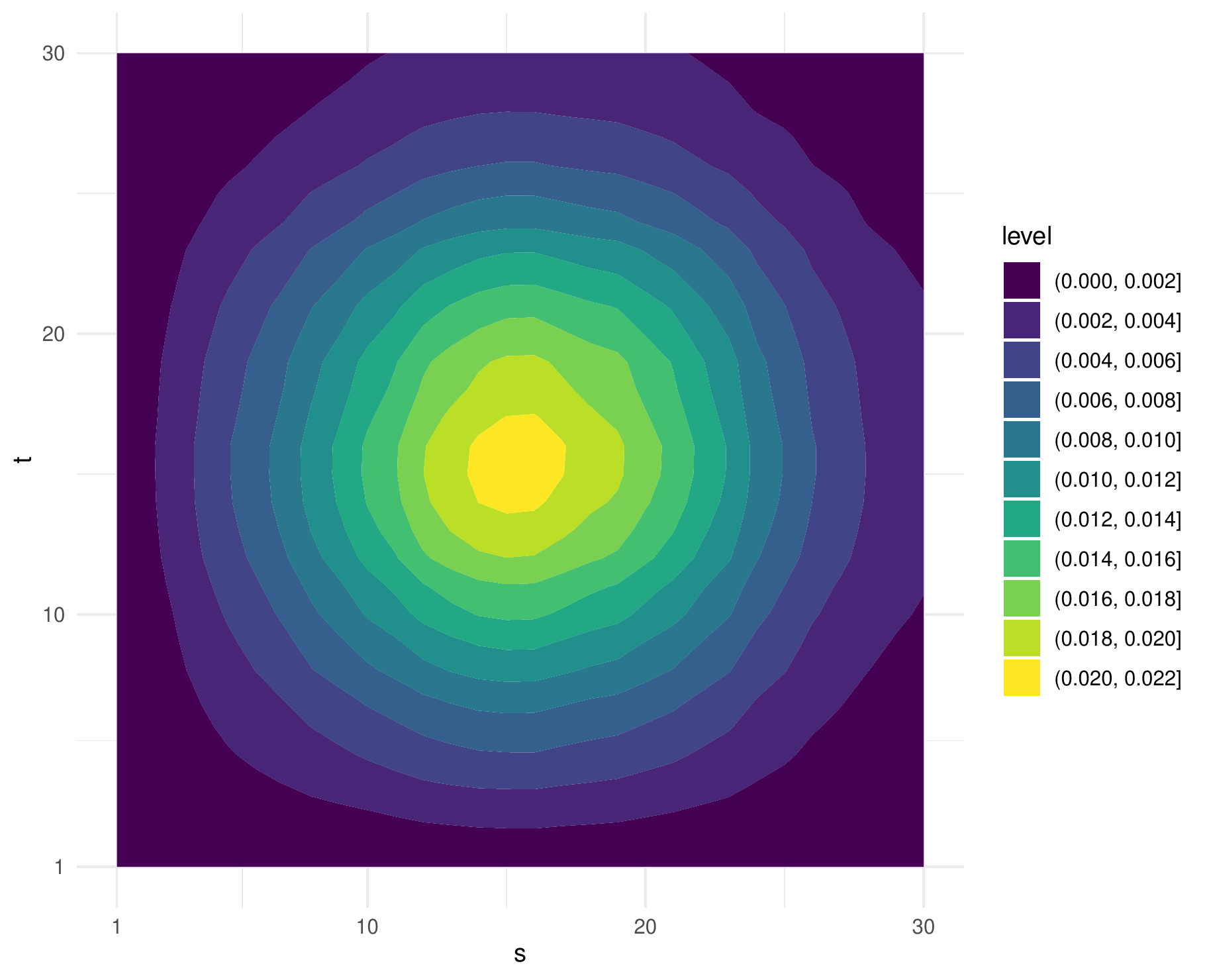}
		\caption{}
		\label{fig:sfig4}
	\end{subfigure}
	\caption{Estimated covariance functions of the four groups of medflies: 
		(a) Females on a sugar diet, (b) Females on a protein plus sugar diet, (c) Males on a sugar diet, and (d) Males on a protein plus sugar diet.}
	\label{fig:covfig}
\end{figure}
The panels in the first row demonstrate 33 sample functions in each group as well as the mean functions, and each sample function is the survival rate of medflies in one cage. The panels in the second row demonstrate deviation of samples from the group's mean function as well as the first eigenfunction of the covariance operator. The first eigenfunction explains the major variation of functional samples within each group (89.3\%, 94.5\%, 96.4\%, and 97.8\% in each group respectively). It could be noticed that there is a slight difference in eigenvalues and eigenfunctions of covariance operators between groups. The kernel functions of the covariance operators in the four groups are depicted in Figure \ref{fig:covfig}, which magnifies the between-groups difference of covariance operators. 
%\begin{figure}[th]
%	\begin{subfigure}{.416\textwidth}
%		\centering
%		\includegraphics[width=.95\linewidth]{images/C3.pdf}
%		\caption{}
%		\label{fig:sfig1}
%	\end{subfigure}%
%	\begin{subfigure}{.416\textwidth}
%		\centering
%		\includegraphics[width=.95\linewidth]{images/C4.pdf}
%		\caption{}
%		\label{fig:sfig2}
%	\end{subfigure}
%\begin{subfigure}{.07\textwidth}
%	\centering
%	\hspace{1em}
%	%\includegraphics[width=.95\linewidth]{images/C4.pdf}
%%	\caption{}
%%	\label{fig:sfig200}
%\end{subfigure}
%	\begin{subfigure}{.416\textwidth}
%		\centering
%		\includegraphics[width=.95\linewidth]{images/C1.pdf}
%		\caption{}
%		\label{fig:sfig3}
%	\end{subfigure}
%	\begin{subfigure}{.5\textwidth}
%		\centering
%		\includegraphics[width=.95\linewidth]{images/C2_leg.pdf}
%		\caption{}
%		\label{fig:sfig4}
%	\end{subfigure}
%	\caption{Estimated covariance functions of the four groups of medflies: 
%		(a) Females on a sugar diet, (b) Females on a protein plus sugar diet, (c) Males on a sugar diet, and (d) Males on a protein plus sugar diet.}
%	\label{fig:covfig}
%\end{figure}
MMD and the four other tests including $L^2$, $T_{\max}$, $\text{GPF}$ and $F_{\max}$ were employed to test the equality of covariance operators. The results are presented in Table \ref{tbl:tblCovRealData1}.

% It can be understood that the results of the $\text{MMD}$ test are similar to those of the $F_{\max}$ test.
It can be understood that the p-values of all pair-wise comparisons are generally smaller than of the other four tests.
%, indicating that MMD test may be more powerful in detecting the between group difference of covariance function.
As described in \citet{guo2019new}, it was expected that $F_{\max}$ test to have higher power than $GPF$ in this data set. However, as it is shown in simulation studies, $\text{MMD}$ has higher power than both $F_{\max}$ and $GPF$ in all of the scenarios.
\begin{table}[tbh]
	\caption{p-Values (in percent) of $L^{2}$, $T_{\max}$, GPF, $F_{\max}$ and MMD tests\newline applied to compare covariance operators of survival functions of the four groups
		of medflies.}
	\centering{}%
	\begin{tabular}{c|ccccc}
		& $L^{2}$ & $T_{\max}$ & $\text{GPF}$ & $F_{\max}$ & $\text{MMD}$\tabularnewline
		\hline 
		(a) vs (b) & \phantom{0}1.4 & \phantom{0}0.4 & \phantom{0}0.6 & 0.1 & 0.1\tabularnewline
		(a) vs (c) & \phantom{0}6.2 & \phantom{0}0.6 & \phantom{0}3.4 & 0.1 & 0.1\tabularnewline
		(a) vs (d) & \phantom{0}0.1 & \phantom{0}0.1 & \phantom{0}0.1 & 0.1 & 0.1\tabularnewline
		(b) vs (c) & 21.2 & 29.2 & 12.2 & 3.6 & 0.1\tabularnewline
		(b) vs (d) & 33.8 & 47.0 & 13.8 & 1.2 & 0.2\tabularnewline
		(c) vs (d) & 24.0 & 27.8 & 21.2 & 6.2 & 2.6\tabularnewline
		All Groups & \phantom{0}2.6 & \phantom{0}01.8 & \phantom{0}0.4 & 0.1 & 0.1\tabularnewline
		\hline 
	\end{tabular}
	\label{tbl:tblCovRealData1}
\end{table}

\section{Conclusions and Discussion}\label{sec:discuss}
This study explored kernel methods for probability measures and their applications to functional data analysis.  We derived conditions of kernels that are characteristic for infinite-dimensional separable Hilbert spaces.  We also derived a framework for introducing a pseudo-likelihood function over infinite-dimensional separable Hilbert spaces. It is shown that the MKM estimators for location and covariance operator obtained by maximizing this pseudo-likelihood function coincide with ordinary least square estimators, which is the same as what we observe in finite-dimensional spaces where ordinary least square estimators coincide with MLE in the case of Gaussian distribution. We also used Maximum Mean Discrepancy as a distance over the space of Gaussian probability measures induced by functional response models and derived new powerful tests for the problems of functional one-way ANOVA and homogeneity of covariance operators. An important question which we have not covered in this paper is how to choose the Gaussian kernel bandwidth parameter $\sigma$. As it is also proposed in \citet{sriperumbudur2010hilbert}, one may choose a family of characteristic kernels $\left\{k_\sigma\left(x,y\right):=exp\left(-\sigma\left\Vert x-y\right\Vert^2\right),~\sigma>0\right\}$ and use the maximal RKHS distance  $\gamma\left(\mathbb{P},\mathbb{Q}\right)=\sup_{\sigma>0}{\gamma_\sigma\left(\mathbb{P},\mathbb{Q}\right)}$ where $\gamma_\sigma$ is the MMD metric defined by characteristic kernel $k_\sigma$. $\gamma$ is a stronger metric than $\gamma_\sigma$, so new tests derived from $\gamma$ must have a better performance than those introduced in section \ref{sec::aplications}.\\

\appendix
\section{Appendix}
\subsection{Proof of Theorem \ref{thm:thm1} and Corollary \ref{cor:smallBallProb}}
To provide the proof of Theorem \ref{thm:thm1} we need the following lemma:
\begin{lem}\label{lem:sumajbj}
	Let $\{b_{j}\}$ be a descending sequence of positive real numbers
	and $\{a_{j}\}$ be a series of real numbers such that $\sum_{j\geq1}\left|a_{j}\right|<\infty$
	and $\sum_{j\geq1}a_{j}b_{j}>0$. Then there exists a finite $N\in\mathbb{N}$
	such that $\sum_{j=1}^{N}a_{j}>0$.
\end{lem}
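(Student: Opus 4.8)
The plan is to use summation by parts (Abel summation) to relate the weighted sum $\sum_{j\ge1} a_j b_j$ to the ordinary partial sums $S_n := \sum_{j=1}^{n} a_j$, and then argue by contradiction. First I would record two preliminary facts. Since $\sum_{j\ge1}|a_j|<\infty$, the limit $S := \lim_{n\to\infty} S_n$ exists and $|S_n|\le M := \sum_{j\ge1}|a_j|$ for every $n$. Since $\{b_j\}$ is positive and descending, it converges to some $b_\infty\ge 0$, and $\sum_{j\ge1}(b_j-b_{j+1}) = b_1 - b_\infty<\infty$ with every summand nonnegative.

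Next I would apply summation by parts: for every $n$,
\[
\sum_{j=1}^{n} a_j b_j \;=\; S_n b_n \;+\; \sum_{j=1}^{n-1} S_j\,(b_j - b_{j+1}).
\]
The series $\sum_{j\ge1} S_j(b_j - b_{j+1})$ converges absolutely, because $|S_j(b_j-b_{j+1})|\le M(b_j-b_{j+1})$ and the latter is summable, while the boundary term satisfies $S_n b_n \to S b_\infty$. Letting $n\to\infty$ therefore yields
\[
\sum_{j\ge1} a_j b_j \;=\; S\, b_\infty \;+\; \sum_{j\ge1} S_j\,(b_j - b_{j+1}).
\]

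Finally I would argue by contradiction. Suppose no finite $N$ has $\sum_{j=1}^{N} a_j>0$, i.e. $S_j\le 0$ for all $j$. Then $S = \lim_{j\to\infty} S_j\le 0$, so $S b_\infty\le 0$ because $b_\infty\ge 0$; and each term $S_j(b_j - b_{j+1})$ is $\le 0$ because $b_j - b_{j+1}\ge 0$. Hence the right-hand side of the displayed identity is $\le 0$, contradicting the hypothesis $\sum_{j\ge1} a_j b_j>0$. Consequently some partial sum $\sum_{j=1}^{N} a_j$ must be strictly positive.

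The only point that genuinely requires care — the \emph{main obstacle}, though a mild one — is the passage to the limit in the summation-by-parts identity: one must verify that the boundary term $S_n b_n$ converges and that the rearranged series $\sum_{j\ge1} S_j(b_j-b_{j+1})$ converges. Both follow from the uniform bound $|S_n|\le M$ together with the telescoping identity $\sum_{j\ge1}(b_j - b_{j+1}) = b_1 - b_\infty$, so no deeper input is needed.
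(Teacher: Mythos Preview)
Your proof is correct and is actually cleaner than the paper's. Both arguments ultimately rest on the same idea---the monotonicity of $\{b_j\}$ lets one transfer positivity from $\sum a_jb_j$ to some partial sum $S_N$ via an Abel-type rearrangement---but they execute it differently.

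The paper argues constructively. It groups the indices into blocks $T_{n_i}=(n_{i-1},n_i]$ ending at the successive indices $n_i$ where $a_j>0$, uses the elementary bound $b_{n_i}\sum_{j\in T_{n_i}}a_j\ge\sum_{j\in T_{n_i}}a_jb_j$ on each block, and then takes $n_k$ to be the first $n_i$ at which $\sum_{j=1}^{n_i}a_jb_j>0$. A short chain of inequalities (itself a disguised finite Abel summation, using that the running sums $\sum_{j\le n_i}a_jb_j$ are $\le 0$ for $i<k$) then shows directly that $\sum_{j=1}^{n_k}a_j>0$, so it exhibits an explicit $N=n_k$.

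Your route---global Abel summation to write $\sum a_jb_j=Sb_\infty+\sum S_j(b_j-b_{j+1})$, followed by contradiction assuming all $S_j\le 0$---is shorter and avoids the block bookkeeping, at the cost of being non-constructive. The limit passage you flagged is indeed the only point needing justification, and your bounds $|S_n|\le M$ and $\sum(b_j-b_{j+1})=b_1-b_\infty$ handle it.
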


\begin{proof}
	Let $P=\left\{ n_{1},n_{2},\ldots\right\} \subseteq\mathbb{N}$ be the set
	of indices for which $a_{j}>0$, define $n_{0}=0$ and for any $n_{i}\in P$,
	let $T_{n_{i}}=\mathbb{N}\cap(n_{i-1},n_{i}]$. Then for any $i\geq1$,
	we have 
	$
	b_{n_{i}}\sum_{j\in T_{n_{i}}}a_{j}\geq\sum_{j\in T_{n_{i}}}b_{j}a_{j}.
	$
	Let $n_{k}\in P$ be the first index such that $\sum\limits _{j=1}^{n_{k}}a_{j}b_{j}>0$. If $k=1$, the proof is straightforward. If $k>1$, then
	\begin{align*}
	\sum_{j=1}^{n_{k}}a_{j}=\sum_{i=1}^{k}\sum_{j\in T_{n_{i}}}\hspace{-0.2em}a_{j}&\geq\sum_{i=1}^{k}\frac{1}{b_{n_{i}}}\sum_{j\in T_{n_{i}}}b_{j}a_{j}  \geq\frac{1}{b_{n_{k-1}}}\sum_{i=1}^{k-1}\sum_{j\in T_{n_{i}}}b_{j}a_{j}+\frac{1}{b_{n_{k}}}\sum_{j\in T_{n_{i}}}b_{j}a_{j}\\
	& \geq\frac{1}{b_{n_{k-1}}}\sum_{i=1}^{k}\sum_{j\in T_{n_{i}}}b_{j}a_{j}=\frac{1}{b_{n_{k-1}}}\sum_{j=1}^{n_{k}}b_{j}a_{j}>0.
	\end{align*}
\end{proof}

\begin{proof}[Proof of Theorem \ref{thm:thm1}:]
	Suppose $m_{P_{2}}(y)-m_{P_{1}}(y)=\delta>0$. There exists $r>0$
	big enough such that $\sup\limits _{x\in B_{r}(y)^{c}}\psi(\left\Vert x-y\right\Vert _{\mathbb{H}})\leq\delta/2$,
	in which $B_{r}(y)=\{x\in\mathbb{H}\:\text{s.t}\:\left\Vert x-y\right\Vert _{\mathbb{H}}<r\}$.
	Then, we have
	
	\begin{align*}
	0<\delta & =\int\limits _{\mathbb{H}}\psi(\left\Vert x-y\right\Vert _{\mathbb{H}})(P_{2}-P_{1})(dx)\\
	&=\int\limits _{B_{r}(y)}\psi(\left\Vert x-y\right\Vert _{\mathbb{H}})(P_{2}-P_{1})(dx)+\int\limits _{B_{r}(y)^{c}}\psi(\left\Vert x-y\right\Vert _{\mathbb{H}})(P_{2}-P_{1})(dx)\\
	& \leq\int\limits _{B_{r}(y)}\psi(\left\Vert x-y\right\Vert _{\mathbb{H}})(P_{2}-P_{1})(dx)+\delta/2
	\end{align*}
	and thus
	\begin{equation}
	\int\limits _{B_{r}(y)}\psi(\left\Vert x-y\right\Vert _{\mathbb{H}})(P_{2}-P_{1})(dx)\geq\frac{\delta}{2}>0.\label{eq:q11}
	\end{equation}
	Let define
	\begin{itemize}
		\item $r_{i,L}=(1-L^{i})r$ ; $i\geq1$ , $L\in(0,1)$
		\item $B_{i,L}=B_{r_{i,L}}(y)$ ; $i\geq1$
		\item $B_{1,L}^{'}=B_{1,L}$ , $B_{i,L}^{'}=B_{i,L}\backslash B_{\left(i-1\right),L}$
		; $i\geq2$
	\end{itemize}
	thus from (\ref{eq:q11}) we have
	\begin{align*}
	0<\frac{\delta}{2}&\leq\sum_{i\geq1}\int\limits _{B_{i,L}^{'}}\psi(\left\Vert x-y\right\Vert _{\mathbb{H}})(P_{2}-P_{1})(dx)\\
	& \leq\sum_{i\geq1}m_{i,L}(P_{2}-P_{1})(B_{i,L}^{'})+\sum_{i\geq1}\gamma_{i,L}P_{2}(B_{i,L}^{'})\\
	& \leq\sum_{i\geq1}m_{i,L}(P_{2}-P_{1})(B_{i,L}^{'})+\sup_{i\geq1}\gamma_{i,L}P_{2}\left(B_{r}(y)\right),
	\end{align*}
	where $m_{i,L}=\inf\limits _{x\in B_{i,L}^{'}}\psi(\left\Vert x-y\right\Vert _{\mathbb{H}})$
	and $M_{i,L}=\sup\limits _{x\in B_{i,L}^{'}}\psi(\left\Vert x-y\right\Vert _{\mathbb{H}})$
	and $\gamma_{i,L}=M_{i,L}-m_{i,L}$. Because $\psi$ is a bounded non-negative continuous and strictly decreasing function, we can choose $L\in\left(0,1\right)$ such that $\sup_{i\geq1}\gamma_{i,L}P_{2}\left(B_{r}(y)\right)<\frac{\delta}{4}$ or $\sup_{i\geq1}\gamma_{i,L}<\frac{\delta}{4P_{2}\left(B_{r}(y)\right)}$
	and thus $\sum_{i\geq1}m_{i,L}(P_{2}-P_{1})(B_{i,L}^{'})>0$. By lemma \ref{lem:sumajbj} there exists $N<\infty$ such that $\sum_{i=1}^{N}\left(P_{2}-P_{1}\right)\left(B_{i,L}^{'}\right)>0$,
	which immediately follows that $\left(P_{2}-P_{1}\right)\left(B_{r^{*}}(y)\right)>0$, where $r^{*}=\left(1-L^{N}\right)r$. 
	%To choose a proper $L\in\left(0,1\right)$ such that $\sup_{i\geq1}\gamma_{i,L}$
	%bounded above by $\frac{\delta}{4P_{2}\left(B_{r}(y)\right)}$, consider
	%that
	%\begin{align*}
	%\sup_{i\geq1}\gamma_{i,L} & =\sup_{i\geq1}\left\{ \sup_{x\in B_{i,L}^{'}}\psi(\left\Vert x-y\right\Vert _{\mathbb{H}})-\inf_{x\in %B_{i,L}^{'}}\psi(\left\Vert x-y\right\Vert _{\mathbb{H}})\right\} =\max\left\{ \psi\left(r_{1,L}\right),\sup_{i\geq2}\left\{ %\psi\left(r_{i,L}\right)-\psi\left(r_{\left(i-1\right),L}\right)\right\} \right\} \\
	%& =\max\left\{ \psi\left((1-L)r\right),\sup_{i\geq2}\left\{ \psi\left((1-L^{i})r\right)-\psi\left((1-L^{i-1})r\right)\right\} \right\}. 
	%\end{align*}
	%Since $\psi$ is a Lipschitz continuous function, there exists a positive constant $c$, such that $\left|\psi'\left(x\right)\right|\leq %c\left|x\right|$ and thus 
	%\begin{equation*}
	%\psi\left((1-L^{i})r\right)-\psi\left((1-L^{i-1})r\right)  =rL\psi'\left((1-L^{i-1})r\right)+o\left(L^{i-1}\left(1-L\right)r\right)\leq %rLc+cL\left(1-L\right)r\leq2rLc.
	%\end{equation*}
	%So it is sufficient to take $L\leq\min\left\{ %1-\frac{1}{r}\psi^{-1}\left(\frac{\delta}{4P_{2}\left(B_{r}(y)\right)}\right),\frac{\delta}{8rcP_{2}\left(B_{r}(y)\right)}\right\} $. Thus, %$\sup_{i\geq1}\gamma_{i,L^*}P_{2}\left(B_{r}(y)\right)<\frac{\delta}{4}$, and consequently $\sum_{i\geq1}m_{i,L^*}(P_{2}-P_{1})(B_{i,L^*}^{'})>0$.
\end{proof}

\begin{proof}[Proof of Corollary \ref{cor:smallBallProb}:]~	
	
	Let assume $\int_{\mathbb{H}}\psi(\left\Vert x-y_{2}\right\Vert _{\mathbb{H}})P(dx)>\int_{\mathbb{H}}\psi(\left\Vert x-y_{1}\right\Vert _{\mathbb{H}})P(dx)$
	and let $P_{-a}$ be the push forward of $P$ by the map $x\mapsto x+a$, which translates
	$x$ to $x+a$, then we have $\int_{\mathbb{H}}\psi(\left\Vert x\right\Vert _{\mathbb{H}})P_{-y_{2}}(dx)>\int_{\mathbb{H}}\psi(\left\Vert x\right\Vert _{\mathbb{H}})P_{-y_{1}}(dx)$
	and thus by the same argument as stated in the proof of Theorem \ref{thm:thm1}
	there exists $r>0$ big enough such that $\left(P_{-y_{2}}-P_{-y_{1}}\right)\left(B_{r}\left(0\right)\right)>0$
	and consequently $P\left(B_{r}(y_{2})\right)>P\left(B_{r}(y_{1})\right)$.
\end{proof}

\subsection{Proof of Theorem \ref{thm:existanceOfCharKer_l2}}
The existence proof of a continuous characteristic kernel for $\ell_2$ relies on the following theorem  by \citet{steinwart2019strictly}. We also need lemma \ref{lem:erinftyContl2Cont} to complete the proof.
\begin{thm}
	\textbf{\label{lem:Steinwart2017Lemma}\citep[Theorem 3.14]{steinwart2019strictly}}
	For a compact topological Hausdorff space $\left(X,\tau\right)$, the
	following statements are equivalent:
\end{thm}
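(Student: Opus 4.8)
Since Theorem~\ref{lem:Steinwart2017Lemma} is quoted from \citep[Theorem 3.14]{steinwart2019strictly} and the list of equivalent conditions is elided in this excerpt, I infer from the way the theorem is used --- to manufacture a continuous characteristic kernel on an infinite-dimensional separable Hilbert space --- that, for a compact Hausdorff $(X,\tau)$, the equivalent statements are: $X$ is metrizable; $C(X)$ is separable; there is a continuous universal kernel on $X$ (i.e.\ with $\mathcal{H}_k$ uniformly dense in $C(X)$); there is a continuous characteristic kernel on $X$; and there is a continuous kernel whose kernel score $S_k$ of (\ref{formula:properScoringRule}) is strictly proper. The plan is to run these as a ring of implications: $X$ metrizable yields a continuous universal kernel, any universal kernel is characteristic, any characteristic kernel induces a strictly proper kernel score, and the existence of such a kernel forces $X$ to be metrizable; separability of $C(X)$ slots in next to metrizability for compact Hausdorff spaces. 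Three of the arrows are cheap; the content lives in the first and last.

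For ``$X$ metrizable $\Rightarrow$ continuous universal kernel exists'', I would use that a compact metrizable space has separable $C(X)$: fix a sequence $\{f_n\}_{n\ge1}$ dense in the closed unit ball of $C(X)$ and set $k(x,y)=\sum_{n\ge1}2^{-n}f_n(x)f_n(y)$. The series converges uniformly, so $k$ is continuous and bounded; it is positive definite by construction; and its RKHS contains every $f_n$, hence a uniformly dense subspace of $C(X)$, so $k$ is universal. ``Universal $\Rightarrow$ characteristic'' is standard: $\int f\,dP=\int f\,dQ$ for all $f\in\mathcal{H}_k$ extends by uniform density to all $f\in C(X)$, whence $P=Q$ by the Riesz representation of $C(X)^\ast$. ``Characteristic $\Rightarrow$ $S_k$ strictly proper'' is immediate from identity (\ref{formula::MMD1}), which identifies strict propriety of $S_k$ with injectivity of $P\mapsto m_P$.

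The one substantive step is the closing arrow, ``a continuous kernel with strictly proper $S_k$ exists $\Rightarrow$ $X$ metrizable''. First, (\ref{formula::MMD1}) again turns strict propriety of $S_k$ into characteristicness of $k$. Then consider the canonical feature map $\Phi:X\to\mathcal{H}_k$, $\Phi(x)=k(\cdot,x)$: it is continuous because $k$ is continuous, and injective because $\Phi(x)=\Phi(x')$ means $m_{\delta_x}=m_{\delta_{x'}}$, hence $\delta_x=\delta_{x'}$ by characteristicness, hence $x=x'$. A continuous injection out of a compact space into a Hausdorff space is a homeomorphism onto its image, so $X$ is homeomorphic to a subspace of $\mathcal{H}_k$; since an RKHS is a Hilbert space and hence metric, $X$ is metrizable.

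I expect this closing arrow to be the crux, and it is precisely where the hypotheses are spent: the target space $\mathcal{H}_k$ is metric only because an RKHS is automatically Hilbert, and ``continuous injection out of a compact space is an embedding'' uses both compactness of $X$ and Hausdorffness of the target. On a non-compact $X$ this collapses --- the feature map need not be a topological embedding --- which is exactly why the theorem is confined to compact $X$; the remaining bookkeeping (uniform convergence of the defining series, the RKHS description, measurability in the Riesz step, and inserting separability of $C(X)$ into the ring) is routine.
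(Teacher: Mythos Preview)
The paper does not prove this theorem at all: it is simply quoted from \citet[Theorem 3.14]{steinwart2019strictly} and used as a black box in the proof of Theorem~\ref{thm:existanceOfCharKer_l2}. So there is no ``paper's own proof'' to compare against. For the record, the three conditions actually listed in the paper are (i) there exists a universal kernel on $X$, (ii) there exists a continuous characteristic kernel on $X$, and (iii) $X$ is metrizable; your inferred list adds ``$C(X)$ separable'' and ``there is a continuous kernel whose $S_k$ is strictly proper'', both of which are genuinely equivalent but are not part of the statement as reproduced here.

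That said, your sketch is a correct proof of the cycle the paper states. The construction $k(x,y)=\sum_{n\ge1}2^{-n}f_n(x)f_n(y)$ from a countable uniformly dense family in the unit ball of $C(X)$ does yield a continuous bounded positive definite kernel whose RKHS contains every $f_n$ (via the feature map $x\mapsto(2^{-n/2}f_n(x))_n\in\ell_2$), hence is universal. The passage universal $\Rightarrow$ characteristic via Riesz is standard, and your closing arrow --- the feature map $x\mapsto k(\cdot,x)$ is a continuous injection from compact $X$ into the Hilbert space $\mathcal{H}_k$, hence a homeomorphism onto its image, hence $X$ is metrizable --- is exactly the right mechanism and uses the hypotheses where they are needed. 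There is no gap.
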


\begin{enumerate}
	\item There exists a universal kernel $k$ on $X$.
	\item There exists a continuous characteristic kernel $k$ on $X$.
	\item $X$ is metrizable, i.e. there exists a metric generating the topology
	$\tau$.
\end{enumerate}
\begin{lem}
	\label{lem:erinftyContl2Cont}Let $\overline{\mathbb{R}}$ be the
	extended real line, and $\overline{\mathbb{R}}^{\infty}$ and
	$\mathbb{R}^{\infty}$ be the countable products of $\overline{\mathbb{R}}$
	and $\mathbb{R}$ respectively, which are equipped with the product topologies,
	and let $\ell_{2}\subset\overline{\mathbb{R}}^{\infty}$ be the Hilbert
	space of square summable sequences. If the function $f:\overline{\mathbb{R}}^{\infty}\to\mathbb{R}$
	is continuous, then $f|_{\ell^2}$, which is restriction
	of $f$ to $\ell_{2}$, is continuous with respect to the norm of
	$\ell_{2}$.
\end{lem}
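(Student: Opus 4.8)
The plan is to show that the canonical inclusion $\iota:\ell_{2}\hookrightarrow\overline{\mathbb{R}}^{\infty}$ is continuous when $\ell_{2}$ is given its norm topology and $\overline{\mathbb{R}}^{\infty}$ its product topology. Once this is established, $f|_{\ell_{2}}=f\circ\iota$ is a composition of continuous maps and the lemma follows immediately.

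To prove continuity of $\iota$, I would invoke the universal property of the product topology: a map into $\overline{\mathbb{R}}^{\infty}$ is continuous if and only if its composition with every coordinate projection $\pi_{j}:\overline{\mathbb{R}}^{\infty}\to\overline{\mathbb{R}}$ is continuous. Here $\pi_{j}\circ\iota$ sends $x=(x_{1},x_{2},\ldots)\in\ell_{2}$ to its $j$-th coordinate $x_{j}\in\mathbb{R}\subset\overline{\mathbb{R}}$, and the bound $|x_{j}-y_{j}|\le\|x-y\|_{\ell_{2}}$ shows that this map is $1$-Lipschitz from $(\ell_{2},\|\cdot\|_{\ell_{2}})$ into $\mathbb{R}$ with its usual metric; composing with the topological embedding $\mathbb{R}\hookrightarrow\overline{\mathbb{R}}$ yields a continuous map into $\overline{\mathbb{R}}$. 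Hence every $\pi_{j}\circ\iota$ is continuous, so $\iota$ is continuous, and therefore $f|_{\ell_{2}}=f\circ\iota:(\ell_{2},\|\cdot\|_{\ell_{2}})\to\mathbb{R}$ is continuous.

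An equivalent route is sequential: since $\overline{\mathbb{R}}^{\infty}$ is a countable product of the metrizable space $\overline{\mathbb{R}}$, it is metrizable, so $f$ is sequentially continuous; if $x^{(n)}\to x$ in $\ell_{2}$-norm, then $x^{(n)}_{j}\to x_{j}$ for each $j$ by the same Lipschitz bound, hence $x^{(n)}\to x$ in the product topology, whence $f(x^{(n)})\to f(x)$. There is no genuine obstacle in either version; the only point requiring minor care is to note that $\mathbb{R}$ sits inside $\overline{\mathbb{R}}$ as a topological subspace, so that coordinatewise convergence in $\mathbb{R}$ agrees with coordinatewise convergence in $\overline{\mathbb{R}}$, and that norm convergence dominates coordinatewise convergence.
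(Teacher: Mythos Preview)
Your proof is correct. Both your argument and the paper's establish the same underlying fact---that the inclusion $\iota:\ell_{2}\hookrightarrow\overline{\mathbb{R}}^{\infty}$ is continuous, i.e.\ the product topology restricted to $\ell_{2}$ is weaker than the norm topology---but the paper takes a more hands-on route. It first fixes an explicit homeomorphism $\varphi:\overline{\mathbb{R}}\to[-1,1]$, uses it to define a bounded metric $\rho$ on $\overline{\mathbb{R}}$, then forms the standard product metric $d(x,y)=\sum_{k}2^{-k}\rho(x_{k},y_{k})/(1+\rho(x_{k},y_{k}))$ on $\overline{\mathbb{R}}^{\infty}$, and finally checks sequentially that $\|x_{n}-x\|_{\ell_{2}}\to 0$ forces $d(x_{n},x)\to 0$. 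Your main argument via the universal property of the product topology bypasses all of this metric bookkeeping, and your sequential variant is essentially the paper's argument with the explicit metric suppressed. The content is the same; your presentation is more economical.
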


\begin{proof}
	Assume that $\varphi : \overline{\mathbb{R}}\to [-1, 1]$ is defined as follows:
	$$\varphi(x)= \frac{x}{1+|x|}, ~~~~\forall x\in \mathbb{R}~~\mbox{and}~~~ \varphi(-\infty)=-1,~~~  \varphi(+\infty)=1.$$
	It is clear that $\varphi$ is a homeomorphic  and order-preserving. Consider  $$\rho(x,y) :=|\varphi(x)-\varphi(y)|,  ~~~ \mbox{for~all}~x, y\in \overline{\mathbb{R}}.$$ Then $\rho$ is a metric and the topology induced by this metric is equivalent to the order topology.  On the other hand, it is well  known  that the product topology on $\overline{\mathbb{R}}^\infty$ can be generated by the following metric \citep[][Theorem 2.6.6]{conwey},
	$$d(x, y):=\sum_{k=1}^\infty\frac{\rho(x_k, y_k)}{2^k\left(1+\rho(x_k, y_k)\right)}, \quad \forall x=(x_k), y=(y_k)\in  \overline{\mathbb{R}}^\infty.$$
	Now, we show that the topology induced by the metric $d$ on $\ell^2$ is weaker than the norm topology. For this purpose, let $x_n=(x_n^k), x=(x^k)\in \ell^2$ and $\|x_n-x\|\to 0$. Therefore, $|x_n^k-x^k|\to 0$ for all $k\in \mathbb{N}$. Because $\varphi$ is a homeomorphic, $\rho(x_n^k, x^k)\to 0$ for any $k\in \mathbb{N}$ and hence $d(x_n, x)\to 0$. Thus, if $f$ is continuous with respect to the product topology, then $f|_{\ell^2}$ is continuous with respect to the norm topology.
\end{proof}
\begin{proof}[Proof of Theorem \ref{thm:existanceOfCharKer_l2}:]
	Without
	loss of generality assume $\mathbb{H}$ to be the space of square
	summable sequences $\ell_{2}$, which is a subset of $\mathbb{R}^{\infty}$,
	and let $\mathcal{B}\left(\ell_{2}\right)$ be Borel sigma-algebra
	generated by the open sets of $\ell_{2}$. $\mathbb{R}$ is a one-dimensional
	locally compact Hausdorff space, and the extended real line $\overline{\mathbb{R}}$
	equipped with order topology is a metrizable Hausdorff and compact topological space.
	Equip both $\mathbb{R}^{\infty}$ and $\overline{\mathbb{R}}^{\infty}$
	with the product topologies and let $\mathcal{B}\left(\mathbb{R}^{\infty}\right)$
	and $\mathcal{B}\left(\overline{\mathbb{R}}^{\infty}\right)$ be the
	Borel sigma-algebra generated by the open sets of these topologies. Consider
	that $\mathcal{B}\left(\ell_{2}\right)=\left\{ A\cap\ell_{2}:A\in\mathcal{B}\left(\mathbb{R}^{\infty}\right)\right\} $,
	and we have $\mathcal{B}\left(\ell_{2}\right)\subseteq\mathcal{B}\left(\mathbb{R}^{\infty}\right)\subseteq\mathcal{B}\left(\overline{\mathbb{R}}^{\infty}\right)$.
	Note that $\mathcal{B}\left(\mathbb{R}^{\infty}\right)\subseteq\mathcal{B}\left(\overline{\mathbb{R}}^{\infty}\right)$
	Because we equipped extended real line $\overline{\mathbb{R}}$ with
	order topology, which includes the bases for the natural topology of $\mathbb{R}$.
	Let $\iota:\ell_{2}\to\overline{\mathbb{R}}^{\infty}$ be the usual
	inclusion map, then for every $A\in\mathcal{B}\left(\overline{\mathbb{R}}^{\infty}\right)$
	we have $\iota^{-1}\left(A\right)=A\cap\ell_{2}\in\mathcal{B}\left(\ell_{2}\right)$,
	so $\iota$ is a $\mathcal{B}\left(\ell_{2}\right)-\mathcal{B}\left(\overline{\mathbb{R}}^{\infty}\right)$
	measurable map and hence every $\ell_{2}$-valued random element is
	an $\overline{\mathbb{R}}^{\infty}$-valued random element and thus
	the space of Borel probability measures on $\left(\ell_{2},\mathcal{B}\left(\ell_{2}\right)\right)$
	is a subset of the space of Borel probability measures on $\left(\overline{\mathbb{R}}^{\infty},\mathcal{B}\left(\overline{\mathbb{R}}^{\infty}\right)\right)$.
	$\overline{\mathbb{R}}^{\infty}$ itself is a metrizable compact topological
	Hausdorff space, thus by invoking Lemma \ref{lem:Steinwart2017Lemma},
	there exists a continuous characteristic kernel $k\left(\cdot,\cdot\right)$
	on $\overline{\mathbb{R}}^{\infty}$, which by employing Lemma \ref{lem:erinftyContl2Cont},
	its restriction to $\ell_{2}$ is also continuous with respect to
	the norm of $\ell_{2}$.
\end{proof}
\subsection{Proof of Theorem \ref{thm:GausCahr_c00}}
Let $\ell_{2}$
be the space of square summable sequences with inner product $\left\langle \cdot,\cdot\right\rangle $
and norm $\left\Vert \cdot\right\Vert $, and let $\Lambda_{\theta}$
be the infinite-dimensional Gaussian measure on the measurable space
$\left(\mathbb{R}^{\infty},\mathcal{B}\left(\mathbb{R}^{\infty}\right)\right)$
defined as the product of countably many copies of normal distribution
with mean zero and variance $\theta$. The dual space of $\mathbb{R}^{\infty}$
is $c_{00}$, so characteristic function of Gaussian measure, for
any $x\in c_{00}$ equals to
\begin{equation}
\psi\left(x\right):=\int\limits _{\mathbb{R}^{\infty}}e^{-i\left\langle \omega,x\right\rangle }\Lambda_{2\sigma}\left(d\omega\right)=e^{-\sigma\left\Vert x\right\Vert ^{2}}.\label{eq:GausKerChar}
\end{equation}
Let $\mathbb{P}$ and $\mathbb{Q}$ be two arbitrary probability measures
over $c_{00}$ such that $\gamma_{k}\left(\mathbb{P},\mathbb{Q}\right)=0$,
then
\begin{align}
0=\gamma_{k}\left(\mathbb{P},\mathbb{Q}\right)^{2} & =\int\limits _{c_{00}}\int\limits _{c_{00}}e^{-\sigma\left\Vert x-y\right\Vert ^{2}}\left(\mathbb{P}-\mathbb{Q}\right)\left(dx\right)\left(\mathbb{P}-\mathbb{Q}\right)\left(dy\right)\nonumber \\
& =\int\limits _{c_{00}}\int\limits _{c_{00}}\left(\int\limits _{\mathbb{R}^{\infty}}e^{-i\left\langle \omega,x-y\right\rangle }\Lambda_{2\sigma}\left(d\omega\right)\right)\left(\mathbb{P}-\mathbb{Q}\right)\left(dx\right)\left(\mathbb{P}-\mathbb{Q}\right)\left(dy\right)\nonumber \\
& \stackrel{(a)}{=}\int\limits _{\mathbb{R}^{\infty}}\left(\int\limits _{c_{00}}\int\limits _{c_{00}}e^{-i\left\langle \omega,x-y\right\rangle }\left(\mathbb{P}-\mathbb{Q}\right)\left(dx\right)\left(\mathbb{P}-\mathbb{Q}\right)\left(dy\right)\right)\Lambda_{2\sigma}\left(d\omega\right)\nonumber \\
& =\int\limits _{\mathbb{R}^{\infty}}\left(\int\limits _{c_{00}}e^{-i\left\langle \omega,x\right\rangle }\left(\mathbb{P}-\mathbb{Q}\right)\left(dx\right)\int\limits _{c_{00}}e^{i\left\langle \omega,y\right\rangle }\left(\mathbb{P}-\mathbb{Q}\right)\left(dy\right)\right)\Lambda_{2\sigma}\left(d\omega\right)\nonumber \\
& =\int\limits _{\mathbb{R}^{\infty}}\left(\phi_{\mathbb{P}}\left(\omega\right)-\phi_{\mathbb{Q}}\left(\omega\right)\right)\left(\overline{\phi_{\mathbb{P}}\left(\omega\right)}-\overline{\phi_{\mathbb{Q}}\left(\omega\right)}\right)\Lambda_{2\sigma}\left(d\omega\right)\nonumber \\
& =\int\limits _{\mathbb{R}^{\infty}}\left|\phi_{\mathbb{P}}\left(\omega\right)-\phi_{\mathbb{Q}}\left(\omega\right)\right|^{2}\Lambda_{2\sigma}\left(d\omega\right).\label{eq:eqGausChar}
\end{align}
In the above equation, Fubini-Tonneli's theorem is invoked in (a).
Dual of $c_{00}$ with norm $\left\Vert \cdot\right\Vert $, is the
space of square summable sequences $\ell_{2}$. So to show that $\mathbb{P}=\mathbb{Q}$,
it is enough to show that $\phi_{\mathbb{P}}=\phi_{\mathbb{Q}}$ agrees
on $\ell_{2}$. By (\ref{eq:eqGausChar}) and by definition of the integral
and the fact that $\text{supp}\left(\Lambda_{2\sigma}\right)=\mathbb{R}^{\infty}$,
for any open set $B$ we have, 
\[
\inf_{\omega\in B}\left|\phi_{\mathbb{P}}\left(\omega\right)-\phi_{\mathbb{Q}}\left(\omega\right)\right|^{2}=0.
\]
Fix $\omega_{0}\in c_{00}$, and for any $m\in\mathbb{N}$ define
$$B_{m}:=\left\{ x\in\mathbb{R}^{m}:\sum_{i=1}^{m}\left(x_{i}-\omega_{0i}\right)^{2}<\frac{1}{m^2}\right\} \times\mathbb{R}^{\infty},$$ which is an open set in $\mathbb{R}^{\infty}$. Thus for each $m\in\mathbb{N}$,
we have, $$\inf_{\omega\in B_{m}}\left|\phi_{\mathbb{P}}\left(\omega\right)-\phi_{\mathbb{Q}}\left(\omega\right)\right|^{2}=0,$$
and so there exists $\omega_{m}\in B_{m}$ such that, $\left|\phi_{\mathbb{P}}\left(\omega_{m}\right)-\phi_{\mathbb{Q}}\left(\omega_{m}\right)\right|^{2}<\frac{1}{m}.$
Confirm that the sequence $\omega_{m}$ converges in the metric of
$\mathbb{R}^{\infty}$ to $\omega_{0}$, since 
\begin{align*}
d\left(\omega_{m},\omega_{0}\right) & =\sum_{k\geq1}2^{-k}\frac{\left|\omega_{mk}-\omega_{0k}\right|}{1+\left|\omega_{mk}-\omega_{0k}\right|}\leq\sum_{k=1}^{m}2^{-k}\frac{\nicefrac{1}{m}}{1+\nicefrac{1}{m}}+\sum_{k>m}2^{-k}\\
& \leq\frac{1}{m+1}\left(1-2^{-m}\right)+2^{-m}\to0.
\end{align*}
So $\left\langle \omega_{m},x\right\rangle \to\left\langle \omega_{0},x\right\rangle $
for any $x\in c_{00}$. By a simple application of Bounded Convergence
Theorem, we have
\begin{align*}
\lim_{m\to\infty}\left|\phi_{\mathbb{P}}\left(\omega_{m}\right)-\phi_{\mathbb{Q}}\left(\omega_{m}\right)\right|^{2} & =\lim_{m\to\infty}\left|\int\limits _{c_{00}}e^{-i\left\langle \omega_{m},x\right\rangle }\mathbb{P}\left(dx\right)-\int\limits _{c_{00}}e^{-i\left\langle \omega_{m},x\right\rangle }\mathbb{Q}\left(dx\right)\right|^{2}\\
& =\left|\int\limits_{c_{00}}\hspace{-0.5em}\lim_{m\to\infty}\hspace{-0.42em}e^{-i\left\langle \omega_{m},x\right\rangle }\mathbb{P}\left(dx\right)\hspace{-0.1em}-\hspace{-0.5em}\int\limits _{c_{00}}\hspace{-0.5em}\lim_{m\to\infty}\hspace{-0.42em}e^{-i\left\langle \omega_{m},x\right\rangle }\mathbb{Q}\left(dx\right)\right|^{2}\\
& =\left|\int\limits _{c_{00}}e^{-i\left\langle \omega_{0},x\right\rangle }\mathbb{P}\left(dx\right)-\int\limits _{c_{00}}e^{-i\left\langle \omega_{0},x\right\rangle }\mathbb{Q}\left(dx\right)\right|^{2}\\
& =\left|\phi_{\mathbb{P}}\left(\omega_{0}\right)-\phi_{\mathbb{Q}}\left(\omega_{0}\right)\right|^{2}
\end{align*}
and thus
\[
\left|\phi_{\mathbb{P}}\left(\omega_{0}\right)-\phi_{\mathbb{Q}}\left(\omega_{0}\right)\right|^{2}=\lim_{m\to\infty}\left|\phi_{\mathbb{P}}\left(\omega_{m}\right)-\phi_{\mathbb{Q}}\left(\omega_{m}\right)\right|^{2}\leq\lim_{m\to\infty}\frac{1}{m}\to0.
\]
\emph{So $\phi_{\mathbb{P}}=\phi_{\mathbb{Q}}$ on $c_{00}$. The
	space $c_{00}$ is dense in $\ell_{2}$, so $\phi_{\mathbb{P}}=\phi_{\mathbb{Q}}$
	agrees on $\ell_{2}$ and thus $\mathbb{P}=\mathbb{Q}$.}

\subsection{Proof of Proposition \ref{prop:GausKMF}}
Before providing the proof we need some tools, which are provided in the upcoming theorems and lemmas. The next theorem is a generalization of Ky Fan's inequality, which 
is useful to show convexity of the map $A\mapsto\left|I+A\right|^{-\nicefrac{1}{2}}$
on the convex set of positive trace-class operators that is crucial to prove Gaussian kernel is characteristic for the family of  Gaussian  distributions. The following
theorem is a special case of \citet[Theorem 1]{minh2017infinite}
when $\mu=\gamma=1$.
\begin{thm}
	\label{thm:(minh2017thm1)} Let
	$\mathbb{H}$ be an infinite-dimensional separable Hilbert space,
	and $A$, $B$ two arbitrary positive trace-class operators, for
	$0\leq\alpha\leq1$
	\[
	\left|\alpha\left(I+A\right)+\left(1-\alpha\right)\left(I+B\right)\right|\geq\left|I+A\right|^{\alpha}\left|I+B\right|^{1-\alpha}.
	\]
	For $0<\alpha<1$, equality occurs if and only if $A=B$.
\end{thm}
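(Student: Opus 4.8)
The plan is to recognize the claimed inequality as the statement that the functional $S\mapsto\log\left|S\right|$ is concave on the convex set $\left\{I+T:T\text{ positive trace-class}\right\}$, and to derive this concavity from the operator concavity of the scalar logarithm. First I would set $S_{1}=I+A$ and $S_{2}=I+B$; since $A,B\geq0$ are trace-class we have $S_{i}\geq I$, so each $S_{i}$ is positive and invertible with bounded inverse, while $S_{i}-I$ is trace-class, so the Fredholm determinant $\left|S_{i}\right|$ is well defined and strictly positive. The elementary identity I would rely on is that for a positive trace-class operator $T$ the operator $\log\left(I+T\right)$ is again trace-class (its eigenvalues $\log\left(1+\lambda_{j}\right)$ satisfy $\sum_{j}\log\left(1+\lambda_{j}\right)\leq\sum_{j}\lambda_{j}<\infty$) and
\[
\log\left|I+T\right|=\operatorname{tr}\log\left(I+T\right).
\]
Since $\alpha S_{1}+\left(1-\alpha\right)S_{2}=I+\alpha A+\left(1-\alpha\right)B$ is also of this form, taking logarithms shows the desired inequality is equivalent to
\[
\operatorname{tr}\log\left(\alpha S_{1}+\left(1-\alpha\right)S_{2}\right)\geq\alpha\operatorname{tr}\log S_{1}+\left(1-\alpha\right)\operatorname{tr}\log S_{2}.
\]

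The main step is to invoke the fact that $t\mapsto\log t$ is operator concave on $\left(0,\infty\right)$, so that in the L\"owner order
\[
\log\left(\alpha S_{1}+\left(1-\alpha\right)S_{2}\right)\geq\alpha\log S_{1}+\left(1-\alpha\right)\log S_{2}.
\]
Both sides are trace-class, so their difference $D$ is a positive trace-class operator. Because the trace is linear and order-preserving on trace-class operators (if $X\geq Y$ then $\operatorname{tr}X\geq\operatorname{tr}Y$), applying $\operatorname{tr}$ to this operator inequality yields exactly the displayed trace inequality, and exponentiating recovers $\left|\alpha\left(I+A\right)+\left(1-\alpha\right)\left(I+B\right)\right|\geq\left|I+A\right|^{\alpha}\left|I+B\right|^{1-\alpha}$.

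For the equality case with $0<\alpha<1$, I would argue that $\operatorname{tr}D=0$ together with $D\geq0$ forces $D=0$, since a positive trace-class operator with vanishing trace has all eigenvalues zero and is therefore the zero operator. Thus equality in the determinant inequality is equivalent to equality in the operator Jensen inequality, $\log\left(\alpha S_{1}+\left(1-\alpha\right)S_{2}\right)=\alpha\log S_{1}+\left(1-\alpha\right)\log S_{2}$. Invoking the \emph{strict} operator concavity of the logarithm, this equality forces $S_{1}=S_{2}$, i.e. $A=B$; the converse direction is immediate.

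The main obstacle I anticipate is the infinite-dimensional bookkeeping rather than the algebra: verifying that $\log S_{1}$, $\log S_{2}$ and $\log\left(\alpha S_{1}+\left(1-\alpha\right)S_{2}\right)$ are genuinely trace-class so that $\log\left|\cdot\right|=\operatorname{tr}\log$ applies, that the trace may be distributed across the operator inequality, and -- most delicately -- establishing the strict operator concavity equality condition for $\log$ that underlies the $A=B$ characterization. A fallback route would be a finite-dimensional truncation: restrict to a growing family of finite-dimensional subspaces, apply the classical Ky Fan concavity of $\log\det$ for matrices, and pass to the limit using continuity of the Fredholm determinant in the trace norm; this sidesteps operator-function theory but demands careful control of the determinant limits and of the equality case under truncation.
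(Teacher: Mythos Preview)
The paper does not actually prove this theorem: it is stated as a special case of \citet[Theorem~1]{minh2017infinite} (with $\mu=\gamma=1$) and is invoked as a black box in the proof of Lemma~\ref{lem:logdetconcavity}. So there is no ``paper's own proof'' to compare against; you are supplying an argument where the authors simply cite one.

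Your route via operator concavity of the logarithm is standard and essentially correct. The identifications $\log|I+T|=\operatorname{tr}\log(I+T)$ for positive trace-class $T$, the L\"owner-order inequality $\log(\alpha S_1+(1-\alpha)S_2)\ge \alpha\log S_1+(1-\alpha)\log S_2$, and monotonicity of the trace are all valid in this setting, and combine exactly as you describe. The one point you rightly flag as delicate---the equality case---can be closed without appealing to an abstract ``strict operator concavity'' principle: use the integral representation $\log S=\int_0^\infty\bigl((1+s)^{-1}I-(S+sI)^{-1}\bigr)\,ds$, so that the difference $D$ is an integral of the nonnegative operators $D_s=\alpha(S_1+sI)^{-1}+(1-\alpha)(S_2+sI)^{-1}-(\alpha S_1+(1-\alpha)S_2+sI)^{-1}$. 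Then $D=0$ forces $D_s=0$ for (almost) every $s$, and the strict operator convexity of $t\mapsto t^{-1}$ (equality in $(\alpha X+(1-\alpha)Y)^{-1}\le \alpha X^{-1}+(1-\alpha)Y^{-1}$ iff $X=Y$) gives $S_1=S_2$. This avoids the truncation fallback you mention, which would indeed make the equality characterization awkward to recover in the limit.
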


\begin{lem}
	\label{lem:logdetconcavity}Let $\mathbb{H}$ be a separable Hilbert
	space, and let $\left|\cdot\right|$ be the determinant of a non-negative
	symmetric operator on $\mathbb{H}$. $A\mapsto\left|I+A\right|^{-\nicefrac{1}{2}}$
	is a convex function over the convex set of positive trace-class operators
	on $\mathbb{H}$, and for any two arbitrary positive trace-class operators
	$A$ and $B$,
	\[
	2\left|I+\frac{A+B}{2}\right|^{-\nicefrac{1}{2}}\leq\left|I+A\right|^{-\nicefrac{1}{2}}+\left|I+B\right|^{-\nicefrac{1}{2}}
	\]
	and $2\left|I+\frac{A+B}{2}\right|^{-\nicefrac{1}{2}}=\left|I+A\right|^{-\nicefrac{1}{2}}+\left|I+B\right|^{-\nicefrac{1}{2}}$
	if and only if $A=B$.
\end{lem}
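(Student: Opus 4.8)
The plan is to obtain the claimed convexity directly from the generalized Ky~Fan inequality in Theorem~\ref{thm:(minh2017thm1)}, composed with the scalar weighted arithmetic--geometric mean inequality. First I would record that the set of positive trace-class operators is convex: a convex combination of positive operators is positive, and trace-class operators form a vector space, so for positive trace-class $A,B$ and $\alpha\in[0,1]$ the operator $\alpha A+(1-\alpha)B$ is again positive trace-class. Since $\alpha(I+A)+(1-\alpha)(I+B)=I+\alpha A+(1-\alpha)B$, the Fredholm determinant $\left|I+\alpha A+(1-\alpha)B\right|$ is well defined, and being a product of factors $1+\mu_i$ with $\mu_i\ge0$ it is $\ge1>0$; likewise $\left|I+A\right|$ and $\left|I+B\right|$ are $\ge1$.

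Next, Theorem~\ref{thm:(minh2017thm1)} gives
\[
\left|I+\alpha A+(1-\alpha)B\right|\ \ge\ \left|I+A\right|^{\alpha}\left|I+B\right|^{1-\alpha}>0 .
\]
As $t\mapsto t^{-\nicefrac{1}{2}}$ is strictly decreasing on $(0,\infty)$, raising both sides to the power $-\nicefrac{1}{2}$ reverses the inequality:
\[
\left|I+\alpha A+(1-\alpha)B\right|^{-\nicefrac{1}{2}}\ \le\ \bigl(\left|I+A\right|^{-\nicefrac{1}{2}}\bigr)^{\alpha}\bigl(\left|I+B\right|^{-\nicefrac{1}{2}}\bigr)^{1-\alpha}.
\]
Applying the weighted AM--GM inequality $u^{\alpha}v^{1-\alpha}\le\alpha u+(1-\alpha)v$ to the positive reals $u=\left|I+A\right|^{-\nicefrac{1}{2}}$ and $v=\left|I+B\right|^{-\nicefrac{1}{2}}$ and chaining the two bounds yields
\[
\left|I+\alpha A+(1-\alpha)B\right|^{-\nicefrac{1}{2}}\ \le\ \alpha\left|I+A\right|^{-\nicefrac{1}{2}}+(1-\alpha)\left|I+B\right|^{-\nicefrac{1}{2}},
\]
which is precisely convexity of $A\mapsto\left|I+A\right|^{-\nicefrac{1}{2}}$ on the convex set of positive trace-class operators; the displayed midpoint inequality of the lemma is the case $\alpha=\nicefrac{1}{2}$.

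For the equality assertion I would track the two inequalities used with $0<\alpha<1$ (here $\alpha=\nicefrac{1}{2}$ lies in this open interval, which is where the equality clause of Theorem~\ref{thm:(minh2017thm1)} applies). If $A=B$, both sides of $2\left|I+\tfrac{A+B}{2}\right|^{-\nicefrac{1}{2}}=\left|I+A\right|^{-\nicefrac{1}{2}}+\left|I+B\right|^{-\nicefrac{1}{2}}$ collapse to $2\left|I+A\right|^{-\nicefrac{1}{2}}$, so equality holds. Conversely, if this identity holds, then equality must hold in the Ky~Fan step, and the equality characterization in Theorem~\ref{thm:(minh2017thm1)} forces $A=B$. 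I do not expect a genuine obstacle: the heavy lifting is already contained in Theorem~\ref{thm:(minh2017thm1)}, and the only point requiring care is the bookkeeping of equality cases, namely verifying that the relevant weight $\nicefrac{1}{2}$ lies in the open interval $(0,1)$ so that the strict equality statement is available.
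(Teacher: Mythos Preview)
Your proof is correct and essentially the same as the paper's: both invoke Theorem~\ref{thm:(minh2017thm1)} and then compose with a scalar convexity argument to pass from the determinant inequality to convexity of $A\mapsto|I+A|^{-1/2}$, with the equality case read off directly from the equality clause of Theorem~\ref{thm:(minh2017thm1)}. The only cosmetic difference is that the paper takes logarithms and composes with the convex increasing map $x\mapsto e^{x}$, whereas you raise to the power $-\tfrac{1}{2}$ and apply weighted AM--GM; these are the same step in different notation.
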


\begin{proof}
	By Theorem \ref{thm:(minh2017thm1)} we have 
	\[
	\log\left|I+\left(\alpha A+\left(1-\alpha\right)B\right)\right|\geq\alpha\log\left|I+A\right|+\left(1-\alpha\right)\log\left|I+B\right|,
	\]
	so $A\mapsto\log\left|I+A\right|$ is a concave function on the convex
	set of positive trace-class operators, and thus $A\mapsto\log\left|I+A\right|^{\nicefrac{-1}{2}}$
	is a convex function and also is $A\mapsto\left|I+A\right|^{\nicefrac{-1}{2}}$,
	since $x\mapsto e^{x}$ is a non-decreasing convex function. Consequently
	\[
	\left|I+\left(\frac{1}{2}A+\frac{1}{2}B\right)\right|^{\nicefrac{-1}{2}}\leq\frac{1}{2}\left|I+A\right|^{-\nicefrac{1}{2}}+\frac{1}{2}\left|I+B\right|^{-\nicefrac{1}{2}}
	\]
	and thus 
	\[
	2\left|I+\frac{A+B}{2}\right|^{-\nicefrac{1}{2}}\leq\left|I+A\right|^{-\nicefrac{1}{2}}+\left|I+B\right|^{-\nicefrac{1}{2}}.
	\]
	By invoking Theorem (\ref{thm:(minh2017thm1)}), equality occurs
	if and only if $A=B$.
\end{proof}
%The Woodbury matrix identity is useful for solving a variety of problems
%in linear algebra. The operator version is stated like this:
%\begin{thm}
%\textbf{\citep[Theorem 3.5.6]{hsing2015theoretical}} For operators $\mathscr{\mathcal{S}},\mathscr{T},\mathscr{U}$
%and $\mathscr{V}$ from $\mathbb{H}$ to $\mathbb{H}$, with $\mathscr{\mathcal{S}}$
%and $\mathscr{T}$ invertible, 
%\[
%\left(\mathscr{T}+\mathscr{U}\mathcal{S}^{-1}\mathscr{V}\right)^{-1}=\mathscr{T}^{-1}-\mathscr{T}^{-1}\mathscr{U}\left(\mathcal{S}+\mathscr{V}\mathscr{T%}^{-1}\mathscr{U}\right)^{-1}\mathscr{V}\mathscr{T}^{-1}
%\]
%\end{thm}
\begin{lem}
	\label{lem:Maniglia2004prop}\textbf{\citep[Proposition 1.2.8]{maniglia2004gaussian}} Let
	$\mathbb{H}$ be a separable Hilbert space and $\mathcal{N}\left(\mu,C\right)$
	be a Gaussian probability measure on $\mathbb{H}$ with mean function $\mu$
	and covariance operator $C$. For any $\sigma>0$ 
	\[
	\int\limits _{\mathbb{H}}e^{-\sigma\left\Vert x\right\Vert _{\mathbb{H}}^{2}}\mathcal{N}\left(\mu,C\right)\left(dx\right)=\left|I+2\sigma C\right|^{-\nicefrac{1}{2}}e^{-\sigma\left\langle \left(I+2\sigma C\right)^{-1}\mu,\mu\right\rangle }.
	\]
\end{lem}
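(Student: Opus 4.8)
The plan is to reduce the infinite-dimensional Gaussian integral to a convergent product of one-dimensional Gaussian integrals by diagonalizing the covariance operator. Let $\left(\lambda_j,e_j\right)_{j\geq1}$ be the eigensystem of the trace-class operator $C$, so that $\{e_j\}$ is an orthonormal basis of $\mathbb{H}$ and $Ce_j=\lambda_je_j$. Writing $x_j=\langle x,e_j\rangle$ and $\mu_j=\langle\mu,e_j\rangle$, the defining property $\langle a,X\rangle\sim\mathcal{N}\left(\langle a,\mu\rangle,\langle Ca,a\rangle\right)$ shows that under $\mathcal{N}(\mu,C)$ the coordinates $X_j=\langle X,e_j\rangle$ are jointly Gaussian with $\mathrm{Cov}(X_i,X_j)=\langle Ce_i,e_j\rangle=\lambda_j\delta_{ij}$; being uncorrelated and jointly Gaussian, they are independent with $X_j\sim\mathcal{N}(\mu_j,\lambda_j)$. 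Since $\left\Vert x\right\Vert_{\mathbb{H}}^2=\sum_{j\geq1}x_j^2$, the integrand depends on $x$ only through the independent coordinates $(x_j)$.

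Next I would compute the finite-dimensional truncations. For $N\in\mathbb{N}$ set $f_N(x)=\exp\left(-\sigma\sum_{j=1}^N x_j^2\right)$, which depends only on the first $N$ (independent) coordinates, so its integral factorizes as $\int f_N\,d\mathcal{N}(\mu,C)=\prod_{j=1}^N I_j$, where
\[
I_j=\int_{\mathbb{R}}e^{-\sigma t^2}\frac{1}{\sqrt{2\pi\lambda_j}}e^{-(t-\mu_j)^2/(2\lambda_j)}\,dt.
\]
Completing the square in the exponent $-\sigma t^2-(t-\mu_j)^2/(2\lambda_j)$ and evaluating the resulting elementary Gaussian integral gives
\[
I_j=\left(1+2\sigma\lambda_j\right)^{-\nicefrac{1}{2}}\exp\left(-\frac{\sigma\mu_j^2}{1+2\sigma\lambda_j}\right),
\]
so that $\int f_N\,d\mathcal{N}(\mu,C)=\prod_{j=1}^N\left(1+2\sigma\lambda_j\right)^{-\nicefrac{1}{2}}\exp\left(-\sigma\sum_{j=1}^N\mu_j^2/(1+2\sigma\lambda_j)\right)$.

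The core step is passing to the limit $N\to\infty$ and reassembling the operator expressions. Since $\sum_{j=1}^N x_j^2\uparrow\left\Vert x\right\Vert_{\mathbb{H}}^2$, the functions $f_N$ decrease pointwise to $e^{-\sigma\left\Vert x\right\Vert_{\mathbb{H}}^2}$ while being dominated by the constant $1$, so dominated convergence yields $\int f_N\,d\mathcal{N}(\mu,C)\to\int e^{-\sigma\left\Vert x\right\Vert_{\mathbb{H}}^2}\mathcal{N}(\mu,C)(dx)$. On the product side, because $C$ is trace-class we have $\sum_j\lambda_j<\infty$, hence $\prod_j(1+2\sigma\lambda_j)$ converges to the Fredholm determinant $\left|I+2\sigma C\right|$ and $\prod_{j=1}^N(1+2\sigma\lambda_j)^{-\nicefrac{1}{2}}\to\left|I+2\sigma C\right|^{-\nicefrac{1}{2}}$; likewise $\sum_j\mu_j^2/(1+2\sigma\lambda_j)\le\sum_j\mu_j^2=\left\Vert\mu\right\Vert^2<\infty$, and since $(I+2\sigma C)^{-1}$ acts as multiplication by $(1+2\sigma\lambda_j)^{-1}$ in the eigenbasis, $\sum_j\mu_j^2/(1+2\sigma\lambda_j)=\langle(I+2\sigma C)^{-1}\mu,\mu\rangle$. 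Matching the two limits yields the claimed identity.

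The main obstacle will be the rigorous justification of the interchange of the limit (equivalently the infinite product) with the integral; I expect this to be handled cleanly by the monotone/dominated convergence argument above, using that the integrand is bounded by $1$ and that trace-class-ness of $C$ guarantees convergence both of the Fredholm determinant and of the quadratic form $\langle(I+2\sigma C)^{-1}\mu,\mu\rangle$.
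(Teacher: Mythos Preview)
Your argument is correct. The paper, however, does not give its own proof of this lemma: it is simply quoted as Proposition~1.2.8 of \citet{maniglia2004gaussian} and then invoked in the proof of Proposition~\ref{prop:GausKMF}. So there is no in-paper proof to compare against.

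Your route---diagonalize $C$ in its eigenbasis, factor the integral over the independent Gaussian coordinates, evaluate each one-dimensional Gaussian integral by completing the square, and pass to the limit via dominated convergence while identifying $\prod_j(1+2\sigma\lambda_j)$ with the Fredholm determinant $|I+2\sigma C|$ and $\sum_j \mu_j^2/(1+2\sigma\lambda_j)$ with $\langle(I+2\sigma C)^{-1}\mu,\mu\rangle$---is the standard and natural proof. One small point worth making explicit for completeness: when $\lambda_j=0$ the coordinate $X_j$ is a Dirac mass at $\mu_j$ rather than having the density $1/\sqrt{2\pi\lambda_j}$ you wrote, but then $I_j=e^{-\sigma\mu_j^2}$, which agrees with the formula $(1+2\sigma\lambda_j)^{-1/2}\exp\bigl(-\sigma\mu_j^2/(1+2\sigma\lambda_j)\bigr)$ at $\lambda_j=0$, so the product identity is unaffected.
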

\begin{proof}[Proof of Proposition \ref{prop:GausKMF}:]
	If $Y\sim\mathcal{N}\left(\mu,\boldsymbol{C}\right)$ then by lemma
	\ref{lem:Maniglia2004prop} we have
	
	\begin{align*}
	m_{P}(x) & =\int\limits _{\mathbb{H}}e^{-\sigma\left\Vert y-x\right\Vert _{\mathbb{H}}^{2}}\mathcal{N}\left(\mu,\boldsymbol{C}\right)\left(dy\right)=\int\limits _{\mathbb{H}}e^{-\sigma\left\Vert z\right\Vert _{\mathbb{H}}^{2}}\mathcal{N}\left(x-\mu,\boldsymbol{C}\right)\left(dz\right)\\
	& =\left|I+2\sigma\boldsymbol{C}\right|^{-1/2}e^{-\sigma\left\langle \left(I+2\sigma\boldsymbol{C}\right)^{-1}\left(x-\mu\right),\left(x-\mu\right)\right\rangle }.
	\end{align*} 
	Let $T_{1}=I+2\sigma C_{1}$, then
	\begin{align*}
	\left\langle m_{P_{1}},m_{P_{2}}\right\rangle _{\mathcal{H}_{k}} & =\int\limits _{\mathbb{H}}\int\limits _{\mathbb{H}}e^{-\sigma\left\Vert x-y\right\Vert _{\mathbb{H}}^{2}}\mathcal{N}\left(\mu_{1},C_{1}\right)\left(dx\right)\mathcal{N}\left(\mu_{2},C_{2}\right)\left(dy\right)\\
	& =\int\limits _{\mathbb{H}}\left|T_{1}\right|^{-\nicefrac{1}{2}}e^{-\sigma\left\langle T_{1}^{-1}\left(y-\mu_{1}\right),\left(y-\mu_{1}\right)\right\rangle }\mathcal{N}\left(\mu_{2},C_{2}\right)\left(dy\right)\\
	& =\left|T_{1}\right|^{-\nicefrac{1}{2}}\int\limits _{\mathbb{H}}e^{-\sigma\left\langle T_{1}^{\nicefrac{-1}{2}}\left(y-\mu_{1}\right),T_{1}^{\nicefrac{-1}{2}}\left(y-\mu_{1}\right)\right\rangle }\mathcal{N}\left(\mu_{2},C_{2}\right)\left(dy\right)\\
	& =\left|T_{1}\right|^{-\nicefrac{1}{2}}\int\limits _{\mathbb{H}}e^{-\sigma\left\Vert z\right\Vert _{\mathbb{H}}^{2}}\mathcal{N}\left(T_{1}^{\nicefrac{-1}{2}}\left(\mu_{2}-\mu_{1}\right),T_{1}^{\nicefrac{-1}{2}}C_{2}T_{1}^{\nicefrac{-1}{2}}\right)\left(dz\right)\\
	& =\left|T_{1}\right|^{-\nicefrac{1}{2}}\left|I+2\sigma T_{1}^{\nicefrac{-1}{2}}C_{2}T_{1}^{\nicefrac{-1}{2}}\right|^{\nicefrac{-1}{2}}\\
	& \qquad\qquad\qquad e^{-\sigma\left\langle \left(I+2\sigma T_{1}^{\nicefrac{-1}{2}}C_{2}T_{1}^{\nicefrac{-1}{2}}\right)^{-1}T_{1}^{\nicefrac{-1}{2}}\left(\mu_{2}-\mu_{1}\right),T_{1}^{\nicefrac{-1}{2}}\left(\mu_{2}-\mu_{1}\right)\right\rangle }\\
	& =\left|T_{1}\right|^{-\nicefrac{1}{2}}\left|I+2\sigma T_{1}^{-1}C_{2}\right|^{\nicefrac{-1}{2}}\\
	& \qquad\qquad\qquad e^{-\sigma\left\langle T_{1}^{\nicefrac{-1}{2}}\left(I+2\sigma T_{1}^{\nicefrac{-1}{2}}C_{2}T_{1}^{\nicefrac{-1}{2}}\right)^{-1}T_{1}^{\nicefrac{-1}{2}}\left(\mu_{2}-\mu_{1}\right),\left(\mu_{2}-\mu_{1}\right)\right\rangle }\\
	& =\left|I+2\sigma\left(C_{1}+C_{2}\right)\right|^{\nicefrac{-1}{2}}e^{-\sigma\left\langle \left(I+2\sigma\left(C_{1}+C_{2}\right)\right)^{-1}\left(\mu_{2}-\mu_{1}\right),\left(\mu_{2}-\mu_{1}\right)\right\rangle },
	\end{align*}
	and thus 
	\begin{align*}
	\left\Vert m_{P_{1}}-m_{P_{2}}\right\Vert _{\mathcal{H}_{k}}^{2} & =\left\Vert m_{P_{1}}\right\Vert _{\mathcal{H}_{k}}^{2}+\left\Vert m_{P_{2}}\right\Vert _{\mathcal{H}_{k}}^{2}-2\left\langle m_{P_{1}},m_{P_{2}}\right\rangle _{\mathcal{H}_{k}}\\
	& =\left|I+4\sigma C_{1}\right|^{-\nicefrac{1}{2}}+\left|I+4\sigma C_{2}\right|^{-\nicefrac{1}{2}}\\
	&\phantom{=}-2\left|I+2\sigma\left(C_{1}+C_{2}\right)\right|^{\nicefrac{-1}{2}}e^{-\sigma\left\langle \left(I+2\sigma\left(C_{1}\hspace{-0.2em}+\hspace{-0.2em}C_{2}\right)\right)^{-1}\left(\mu_{2}-\mu_{1}\right),\left(\mu_{2}-\mu_{1}\right)\right\rangle }.
	\end{align*}
	By invoking lemma \ref{lem:logdetconcavity} we have 
	$$\left|I+4\sigma C_{1}\right|^{-\nicefrac{1}{2}}+\left|I+4\sigma C_{2}\right|^{-\nicefrac{1}{2}}\geq2\left|I+2\sigma\left(C_{1}+C_{2}\right)\right|^{\nicefrac{-1}{2}},$$ 
	and the equality occurs if and only if $C_{1}=C_{2}$. So $\left\Vert m_{P_{1}}-m_{P_{2}}\right\Vert _{\mathcal{H}_{k}}^{2}=0$
	if and only if $\mu_{1}=\mu_{2}$ and $C_{1}=C_{2}$. Hence, Gaussian
	kernel is characteristic for the family of Gaussian distributions.
\end{proof}

\subsection{Proof of Proposition \ref{thm:tensorProdCharKer}}
\begin{proof}
	We first give a proof for the product-kernel. A proof for the sum-kernel follows the same approach. Let $\mathscr{P}$ be the collection of probability measures on a
	separable Hilbert space $\mathbb{H}$, and $k\left(\cdot,\cdot\right):\mathbb{H}\times\mathbb{H}\longrightarrow\mathbb{R}$
	a characteristic kernel on $\mathbb{H}$. Consider the kernel mean with product-kernel 
	
	\[
	m_{k^{n}}:\mathscr{P}^{n}\rightarrow\mathbb{\mathcal{H}}_{k^{n}}\qquad\otimes_{j=1}^{n}P_{j}\mapsto m_{\otimes_{i=1}^{n}P_{j}}(x_{1},\ldots,x_{n})
	\]
	such that for any $x_{1},\ldots,x_{n}\in\mathbb{H}$,
	\begin{align*}
	m_{\otimes_{j=1}^{n}P_{j}}(x_{1},\ldots,x_{n}):&=\int\limits _{\mathbb{H}^{n}}\left(\prod_{i=1}^{n}k(x_{i},y_{i})\right)\otimes_{j=1}^{n}P_{j}(dy_{j})\\
	&=\prod_{i=1}^{n}\int\limits _{\mathbb{H}}k(x_{i},y_{i})P_{i}(dy_{i})=\prod_{i=1}^{n}m_{P_{i}}\left(x_{i}\right).
	\end{align*}
	Let $\mathbb{P},\mathbb{Q}\in\mathscr{P}^{n}$ i.e. $\mathbb{P}=\otimes_{j=1}^{n}P_{j}$
	, $\mathbb{Q}=\otimes_{j=1}^{n}Q_{j}$ such that $\mathbb{P}\neq\mathbb{Q}$.
	Given $k$ is characteristic on $\mathbb{H}$, there exists $1\leq i\leq n$
	such that $P_{i}\neq Q_{i}$ and $m_{P_{i}}\left(\cdot\right)\neq m_{Q_{i}}\left(\cdot\right)$, thus there exists $\left(x_{n}\right)\in\mathbb{H}^{n}$ such that $\prod_{i=1}^{n}m_{P_{i}}\left(x_{i}\right)\neq\prod_{i=1}^{n}m_{Q_{i}}\left(x_{i}\right)$. Similarly let
	
	\[
	m_{k^{n}}:\mathscr{P}^{n}\rightarrow\mathbb{\mathcal{H}}_{k^{n}}\qquad\otimes_{j=1}^{n}P_{j}\mapsto m_{\otimes_{i=1}^{n}P_{j}}(x_{1},\ldots,x_{n})
	\]
	such that for any $x_{1},\ldots,x_{n}\in\mathbb{H}$,
	\begin{align*}
	m_{\otimes_{i=1}^{n}P_{j}}(x_{1},\ldots,x_{n}):&=\int\limits _{\mathbb{H}^{n}}\left(\sum_{i=1}^{n}k(x_{i},y_{i})\right)\otimes_{j=1}^{n}P_{j}(dy_{j})\\
	&=\sum_{i=1}^{n}\int\limits _{\mathbb{H}}k(x_{i},y_{i})P_{i}(dy_{i})=\sum_{i=1}^{n}m_{P_{i}}\left(x_{i}\right).
	\end{align*}
	Let $\mathbb{P},\mathbb{Q}\in\mathscr{P}^{n}$ i.e. $\mathbb{P}=\otimes_{j=1}^{n}P_{j}$
	, $\mathbb{Q}=\otimes_{j=1}^{n}Q_{j}$ and $\mathbb{P}\neq\mathbb{Q}$.
	Given $k$ is characteristic on $\mathbb{H}$, there exists $1\leq i\leq n$
	such that $P_{i}\neq Q_{i}$ and $m_{P_{i}}\left(\cdot\right)\neq m_{Q_{i}}\left(\cdot\right)$,
	thus there exists $\left(x_{n}\right)\in\mathbb{H}^{n}$ such that
	$\sum_{i=1}^{n}m_{P_{i}}\left(x_{i}\right)\neq\sum_{i=1}^{n}m_{Q_{i}}\left(x_{i}\right)$.
\end{proof}
%\subsection{Proof of Proposition \ref{Prop:GausCase_SufficientStatistics}}
%Based on the logarithm of the kernel mean function given in (\ref{eq:modelReg}), we have
%\begin{align*}
%\log~ & m_{\otimes_{i=1}^{n}\mathcal{N}\left(\mu,C\right)}(y_{1},\ldots,y_{n})=\sum_{i=1}^{n}\sum_{j\geq1}\frac{-\sigma}{1+2\sigma\lambda_{j}}\langle y_{i}-\mu,\psi_{j}\rangle^{2}-\frac{n}{2}\sum_{j\geq1}\log\left(1+2\sigma\lambda_{j}\right)\\
%& =\sum_{j\geq1}\frac{-\sigma}{1+2\sigma\lambda_{j}}\sum_{i=1}^{n}\langle y_{i}-\bar{y}+\bar{y}-\mu,\psi_{j}\rangle^{2}-\frac{n}{2}\sum_{j\geq1}\log\left(1+2\sigma\lambda_{j}\right)\\
%& =\sum_{j\geq1}\frac{-\sigma}{1+2\sigma\lambda_{j}}\sum_{i=1}^{n}\left[\langle y_{i}-\bar{y},\psi_{j}\rangle^{2}+\langle\bar{y}-\mu,\psi_{j}\rangle^{2}-2\langle y_{i}-\bar{y},\psi_{j}\rangle\langle\bar{y}-\mu,\psi_{j}\rangle\right]-\frac{n}{2}\sum_{j\geq1}\log\left(1+2\sigma\lambda_{j}\right)\\
%& =\sum_{j\geq1}\frac{-\sigma}{1+2\sigma\lambda_{j}}\sum_{i=1}^{n}\left[\langle y_{i}-\bar{y},\psi_{j}\rangle^{2}+\langle\bar{y}-\mu,\psi_{j}\rangle^{2}\right]-\frac{n}{2}\sum_{j\geq1}\log\left(1+2\sigma\lambda_{j}\right)\\
%& =\sum_{j\geq1}\frac{-n\sigma}{1+2\sigma\lambda_{j}}\left[\langle\hat{C}_{Y}\psi_{j},\psi_{j}\rangle+\langle\bar{y}-\mu,\psi_{j}\rangle^{2}\right]-\frac{n}{2}\sum_{j\geq1}\log\left(1+2\sigma\lambda_{j}\right)
%\end{align*}

\section*{Acknowledgements}
The first author is grateful to the Graduate office of the University of Isfahan for their support. Part of this work was done while Saeed Hayati was visiting in the Institute of Statistical Mathematics under the support by the Research Organization of Information and Systems.  KF has been supported in part by JSPS KAKENHI 18K19793.
Afshin Parvardeh gratefully thanks Professor Victor Panaretos and EPFL in Switzerland for the kind hospitality  that received during spending his sabbatical leave at EPFL, in which this work, in part, was prepared.
%\printbibliography

\bibliography{bibtex_MKM}

\end{document}